







\documentclass[3p,times]{elsarticle}



\pdfoutput=1








\usepackage{amssymb}





\usepackage[figuresright]{rotating}
\usepackage{amsmath, amscd}
\usepackage{wrapfig}
\usepackage{tikz}

\newtheorem{theorem}{Theorem}
\newtheorem{lemma}[theorem]{Lemma}
\newtheorem{proposition}{Proposition}
\newproof{proof}{Proof}
\newdefinition{remark}{Remark}
\newdefinition{assumption}{Assumption}




\begin{document}

\begin{frontmatter}




\title{Variational integrators for electric circuits\tnoteref{t1}}
\tnotetext[t1]{Preprint submitted to Journal of Computational Physics}


\author[UPB]{Sina Ober-Bl\"obaum}
\author[CIT1]{Molei Tao}
\author[CIT2]{Mulin Cheng}
\author[CIT1,CIT2]{Houman Owhadi}
\author[CIT1,CIT2,cor2]{Jerrold E.~Marsden}
\address[UPB]{Computational Dynamics and Optimal Control, University of Paderborn, Germany}
\address[CIT1]{Control and Dynamical Systems, California Institute of Technology, USA}
\address[CIT2]{Applied and Computational Mathematics, California Institute of Technology, USA}
\cortext[cor1]{ deceased}
\begin{abstract}
In this contribution, we develop a variational integrator for the simulation of (stochastic and multiscale) electric circuits.
When considering the dynamics of an electrical circuit, one is faced with three special situations: 1. The system involves external (control) forcing through external (controlled) voltage sources and resistors. 2. The system is constrained via the Kirchhoff current (KCL) and voltage laws (KVL). 3. The Lagrangian is degenerate.
Based on a geometric setting, an appropriate variational formulation is presented to model the circuit from which the equations of motion are derived. A time-discrete variational formulation provides an iteration scheme for the simulation of the electric circuit. Dependent on the discretization, the intrinsic degeneracy of the system can be canceled for the discrete variational scheme. In this way, a variational integrator is constructed that gains several advantages compared to standard integration tools for circuits; in particular, a comparison to BDF methods (which are usually the method of choice for the simulation of electric circuits) shows that even for simple LCR circuits, a better energy behavior and frequency spectrum preservation can be observed using the developed variational integrator.

\end{abstract}

\begin{keyword}
structure-preserving integration \sep variational integrators \sep degenerate systems \sep electric circuits \sep noisy systems \sep multiscale integration

\end{keyword}

\end{frontmatter}


\section{Introduction}

Variational integrators have mainly been developed and used for a wide variety of mechanical systems.
However, real-life systems are generally not of purely mechanical character.
In fact, more and more systems become multidisciplinary in the sense, that not only mechanical parts, but also electric and software subsystems are involved, resulting in mechatronic systems.
Since the integration of these systems with a unified simulation tool is desirable, the aim of this work is to extend the applicability of variational integrators to mechatronic systems. In particular, as the first step towards a unified simulation, we develop a variational integrator for the simulation of electric circuits.

\paragraph{Overview}

Variational integrators \cite{MaWe01} are based on a discrete variational formulation of the underlying system, e.g.\ based on a discrete version of Hamilton's principle for conservative mechanical systems.
 The resulting integrators given by the discrete Euler-Lagrange equations are symplectic and momentum-preserving and have an excellent long-time energy behavior.
 Choosing different variational formulations (e.g.~Hamilton, Lagrange-d'Alembert, Hamilton-Pontryagin, etc.), variational integrators have been developed for classical conservative mechanical systems (for an overview see \cite{LMOW04,LMOWe04}), forced \cite{Kane00} and controlled \cite{DMOC} systems, constrained systems (holonomic \cite{leyendecker07-2, DMOCC} and nonholonomic systems \cite{KoMa2010}),  nonsmooth systems \cite{FMOW03}, stochastic systems \cite{BRO08}, and multiscale systems \cite{TaOwMa2010}.
Most of these systems share the assumption, that they are non-degenerate, i.e.~the Legendre transformation of the corresponding Lagrangian is a diffeomorphism.
Applying Hamilton's principle to a regular Lagrangian system, the resulting Euler-Lagrange equations are ordinary differential equations of second order and equivalent to Hamilton's equations.
 
The Lagrangian formulation for LC circuits is based on
the electric and magnetic energies in the circuit and the interconnection constraints expressed in the Kirchhoff laws. 
There exist a large variety of different approaches for a Lagrangian or Hamiltonian formulation of electric circuits (see e.g.~\cite{Chua74,KwMaBa82,ClSch2003,BL89,Sz79} and references therein). All of theses authors treat the question of which choice of the Lagrangian coordinates and derivatives is the most appropriate one. Several settings have been proposed and analyzed, e.g.~a variational formulation based on capacitor charges and currents, on inductor fluxes and voltages, and a combination of both settings, as well as formulations based on linear combinations of the charges and flux linkages.
Typically, one wants to find a set of generalized coordinates, such that the resulting Lagrangian is non-degenerate. However, within such a formulation, the variables are not easily interpretable in terms of original terms of a circuit.

A recently-considered alternative formulation is based on a redundant set of coordinates resulting in a Lagrangian system for which the Lagrangian is degenerate.
For a degenerate Lagrangian system, i.e.~the Legendre transform is not invertible, the Euler-Lagrange equations involve additional hidden algebraic constraints. Then, the equations do not have a unique solution, and additional constraints are required for unique solvability of the system. For the circuit case, these are provided by the Kirchhoff Current Law (KCL). 
From a geometric point of view, the KCL provides a constraint distribution that induces a \emph{Dirac structure} for the degenerate system. The associated system is then denoted by an \emph{implicit Lagrangian system}.
In \cite{YoMa2006a} and \cite{YoMa2006c}, it was shown that nonholonomic mechanical systems and LC circuits as degenerate Lagrangian systems can be formulated in the context of induced Dirac structures and associated implicit Lagrangian systems. The variational structure of an implicit Lagrange system is given in the context of the Hamiltonian-Pontryagin-d'Alembert principle, as shown in \cite{YoMa2006b}. 
The resulting Euler-Lagrange equations are called the \emph{implicit Euler-Lagrange equations} \cite{YoMa2006a,YoMa2006b,MvdS02}, which are 
semi-explicit differential-algebraic equations that consist of a system of first order differential equations and an additional algebraic equation that constrains the image of the Legendre transformation (called the \emph{set of primary constraints}). Thus, the modeling of electric circuits involves both primary
constraints as well as constraints coming from Kirchhoff's laws.
In \cite{JYM10}, an extension towards the interconnection of implicit Lagrange systems for electric circuits is demonstrated.
For completeness, we have to mention that the corresponding notion of implicit Hamiltonian systems and implicit Hamiltonian equations was developed earlier by \cite{Bl00,MvSB95,BC97}.
An intrinsic Hamiltonian formulation of dynamics of LC circuits as well as interconnections of Dirac structures have been developed, e.g.~in \cite{MvSB95} and \cite{CvS07}, respectively.

There are only a few works dealing with the variational simulation of degenerate systems, e.g.~in \cite{RoMa02}, variational integrators with application to point vertices as a special case of degenerate Lagrangian system are developed.
Although there exists a variety of different variational formulations for electric circuits, variational integrators for their simulation have not been concretely investigated and applied thus far.
In \cite{LeOh2010}, a framework for the description of the discrete analogues of implicit Lagrangian and Hamiltonian systems is proposed. This framework is the foundation for the development of an integration scheme. However, no concrete simulation scenarios have yet been performed.
Furthermore, the discrete formulation of the variational principle is slightly different from the approach presented in this work, thus resulting in a different scheme.

\paragraph{Contribution}
In this work, we present a unified variational framework for the modeling and simulation of electric circuits.
The focus of our analysis is on the case of ideal linear circuit elements, consisting of inductors, capacitors, resistors and voltage sources. However, this is not a restriction of this approach, and the variational integrators can also be developed for nonlinear circuits, which is left for future work.
A geometric formulation of the different possible state spaces for a circuit model is introduced.
This geometric view point forms the basis for a variational formulation. Rather than dealing with Dirac structures, we work directly with the corresponding variational principle, where we follow the approach introduced in \cite{YoMa2006b}.
When considering the dynamics of an electric circuit, one is faced with three specific situations that lead to a special treatment within the variational formulation and thus the construction of appropriate variational integrators: 1.~The system involves external (control) forcing through external (controlled) voltage sources. 2.~The system is constrained via the Kirchhoff current (KCL) and voltage laws (KVL). 3.~The Lagrangian is degenerate leading to primary constraints.
For the treatment of forced systems, the Lagrange-d'Alembert principle is the principle of choice.
Involving constraints, one has to consider constrained variations resulting in a constrained principle. The degeneracy requires the use of the Pontryagin version; thus, the principle of choice is the constrained {\it Lagrange-d'Alembert-Pontryagin principle} \cite{YoMa2006b}. 
Two variational formulations are considered: First, a constrained variational formulation is introduced for which the KCL constraints are explicitly given as algebraic constraints, whereas the KVL are given by the resulting Euler-Lagrange equations. Second, an equivalent reduced constrained variational principle is developed, for which the KCL constraints are eliminated due to a representation of the Lagrangian on a reduced space. In this setting, the charges and flux linkages are the differential variables, whereas the currents play the role of algebraic variables. The number of inductors in the circuit and the circuit topology determine the degree of degeneracy of the system.
For the reduced version, we show for which cases the degeneracy of the system is canceled via the KCL constraints. Based on the variational formulation, a variational integrator for electric circuits can be constructed. 
For the case of a degenerate system, the applicability of the variational integrator is dependent on the choice of discretization. Based on the type and order of the discretization, the degeneracy of the continuous system is canceled for the resulting discrete scheme.
Three different integrators and their applicability to different electric circuits are investigated. 
The generality of a unified geometric (and discrete) variational formulation is advantageous for the analysis -- for very complex circuits in particular. Using the geometric approach, the main structure-preserving properties of the (discrete) Lagrangian system can be derived. In particular, good energy behavior and preservation of the spectrum of high frequencies of the solutions can be observed. Furthermore, preserved momentum maps due to symmetries of the Lagrangian system can be derived.
Going one step further, we extend the approach to a stochastic and multiscale setting.
Due to the variational framework, the resulting stochastic integrator will well capture the statistics of the solution (see for instance \cite{BRO10}), and the resulting multiscale integrator will still be variational \cite{TaOwMa2010}.

\paragraph{Outline}

In Section~\ref{sec:elecirc}, we first review the basic notation for electric circuits followed by a graph representation to describe the circuit topology. In addition, we introduce a geometric formulation that gives an interpretation of the different state spaces of a circuit model.
Based on the geometric view point, the two (reduced and unreduced) variational formulations are derived in Section~\ref{sec:varcirc}. The equivalence of both formulations as well as conditions for obtaining a non-degenerate reduced system are proven. 
In Section~\ref{sec:disvar}, the construction of different variational integrators for electric circuits is described and conditions for their applicability are derived.
The main structure-preserving properties of the Lagrangian system and the variational integrator are summarized in Section~\ref{sec:structure}.
In Section~\ref{sec:noise}, the approach is extended for the treatment of noisy circuits.
In Section~\ref{sex:example}, the efficiency of the developed variational integrators is demonstrated by means of numerical examples. A comparison with standard circuit modeling and circuit integrators is given. In particular, the applicability of the multiscale method FLAVOR \cite{TaOwMa2010} is demonstrated for a circuit with different time scales.

\section{Electric circuits}\label{sec:elecirc}

\subsection{Basic notations}
Considering an electric circuit, we introduce the following notations (following \cite{Nils2005}): A \emph{node} is a point in the circuit where two or more elements meet. A \emph{path} is a trace of adjacent elements, with no elements included more than once. A \emph{branch} is a path that connects two nodes. A \emph{loop} is a path that begins and ends at the same node. A \emph{mesh} (also called \emph{fundamental loop}) is a loop that does not enclose any other loops. A \emph{planar circuit} is a circuit that can be drawn on a plane without crossing branches.

Let  $q(t), v(t), u(t) \in \mathbb{R}^n$ be the time-dependent charges, the currents and voltages of the circuit elements with $t\in[0,T]$, where $q_J(t), v_J(t), u_J(t) \in \mathbb{R}^{n_J},\; J\in {\{L,C,R,V\}}$ are the corresponding quantities through the $n_L$ inductors, the $n_C$ capacitors, the $n_R$ resistors, and the $n_V$ voltage sources.
In addition, we give each of those devices an assumed current flow direction. In Table \ref{tab:chars}, the characteristic equations for basic elements are listed.
\begin{table}[tbp]
\center
\begin{tabular}{lll}
\hline
device & linear & nonlinear\\
\hline\hline
resistor & $v_R = G u_R$ & $v_R = g(u_R,t)$\\
\hline
capacitor & $v_C = C \frac{d}{dt}u_C$ & $v_C = \frac{d}{dt}q_C(u_C,t)$\\
\hline
inductor & $u_L = L \frac{d}{dt}v_L$ & $u_L = \frac{d}{dt}p_L(v_L,t)$\\
\hline\hline
&&\\
\hline
device &independent & controlled\\
\hline\hline
voltage source & $u_V = v(t)$ & $u_V = v(u_{ctrl},v_{ctrl},t)$\\
\hline
current source & $v_I = i(t)$ & $v_I = v(u_{ctrl},v_{ctrl},t)$\\
\hline\hline\\
\end{tabular}
\caption{Characteristic equations for basic circuit elements.}
\label{tab:chars}
\end{table}
For the analysis in this work, we focus on ideal linear circuit elements, resulting in the following constitutive laws:
\[
u_L(t) = L \dot{v}_L(t),\quad v_C(t) = C \dot{u}_C(t),\quad u_R(t)= R v_R(t),
\]
with inductance $L$, capacitance $C$ and resistance $R=G^{-1}$ with conductance $G$ and where in general it holds $\dot{q}(t)=v(t)$. The flux linkage for each element is denoted by $p(t) \in \mathbb{R}^n$ and for an inductor, it is defined as the time integral of the voltage across the inductor.
Note that in the case of an inductor (resp.~a capacitor), the associated charge $q_L$ (resp.~flux linkage $p_C$) is an artificial variable. Similarly, for the resistors and the voltage sources, the associated charges $q_R, q_V$ and flux linkages $p_R, p_V$ are artificial variables.

Ideal inductors and capacitors are purely reactive, i.e.~they dissipate no energy.
Thus, the magnetic energy stored in one inductor with inductance $L$ is
\[
E_\text{mag}= \frac{1}{2} L v_L^2
\]
where the amount of energy storage in one capacitor with capacitance $C$ is
\[
E_\text{el} = \int_{q=0}^{q_C} u_C \,dq = \int_{q=0}^{q_C} \frac{q}{C} \,dq = \frac{1}{2}\frac{1}{C}q_C^2.
\]

\subsection{Graph representation}
Consider now a circuit as a connected, directed graph with $n$ edges and $m+1$ nodes.
On the $i$th edge, there are: a capacitor with capacitance $C_i$, an inductor with inductance $L_i$, a voltage source $\epsilon_i$ and a resistor with resistance $R_i$, one or several of which can be zeros (cf.~Figure~\ref{celements}).
\begin{figure}[htb]
 \centering
 \includegraphics[width=0.5\textwidth]{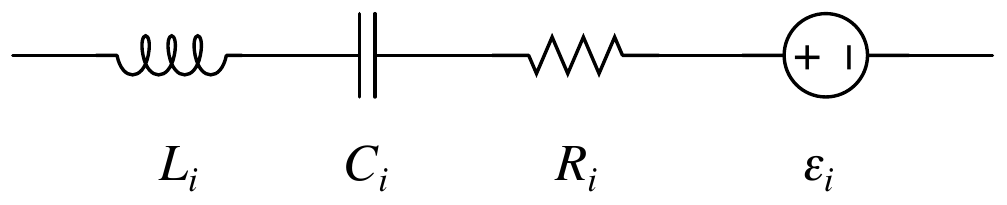} 
\caption{A typical branch of a circuit. On this edge, there are: an inductor $L_i$, a capacitor $C_i$, a resistor $R_i$, and a voltage source $\epsilon_i$, one or several of which can be zeros.}
\label{celements}
\end{figure}
Thus, branches in the circuit correspond to edges in the graph.
In the special case that each edge in the graph represents only one circuit element, the number of edges in the graph equals the number of circuit elements, and the number of nodes of the circuit and the graph are the same. 
For simplicity, we use the notions from circuit theory, i.e.~talking about branches and meshes in the graph.

For the analysis with circuits, one is faced with the following two basic laws:
\begin{enumerate}
\item The Kirchhoff Current Law (KCL) states that the sum of currents leading to and leaving from any node is equal to zero.
\item The Kirchhoff Voltage Law (KVL) states that the sum of voltages along each mesh (or fundamental loop) of the network is equal to zero.
\end{enumerate}
Let $K \in\mathbb{R}^{n,m}$ be the {\em Kirchhoff Constraint matrix} of a given circuit represented via a graph defined by
\begin{equation}
K_{ij} = \left\{ \begin{array}{ll} -1& \text{branch $i$ connected inward to node $j$}\\  +1& \text{branch $i$ connected outward to node $j$}\\  0& \text{otherwise.} \end{array}\right.
\end{equation}
In the special case where the two ends of an edge are connected to the same node, we set $K_{ij} =0$.
Since the ground node is excluded, the Kirchhoff Constraint matrix has only $m$ rather than $m+1$ columns.
Allowing only one circuit element for one branch, either inductor, capacitor, resistor, or voltage source, $K$ can be expressed as
\[  K = \left(\begin{array}{c} K_L \\ K_C \\K_R \\ K_V\end{array}\right), \]
where $K_{J}\in\mathbb{R}^{n_J,m}, J\in\{ L,C,R,V\}$ is the Constraint Matrix for the set of $n_L$ inductors, $n_C$ capacitors, $n_R$ resistors, and $n_V$ voltage sources, respectively with $n_L+n_C+n_R+n_V = n$.
The Kirchhoff Constraint Matrix provides the Kirchhoff current constraints as $K^T v  =0$.
For connected, planar graphs, the number of meshes $l$ is determined via $l=n-m$, where $n$ is the number of branches and $m+1$ the number of nodes. This is a direct consequence from Euler's formula \cite{GY04}.
We can thus define the \emph{Fundamental Loop matrix} $K_2\in\mathbb{R}^{n,n-m}$ by
\begin{equation}\label{eq:meshmatrix}
K_{2,ij} = \left\{ \begin{array}{ll} -1& \text{branch $i$ is a backward branch in mesh $j$}\\  +1& \text{branch $i$ is a forward branch in mesh $j$}\\  0& \text{branch $i$ does not belong to mesh $j$,} \end{array}\right.
\end{equation}
where again $K_2$ can be expressed as
\[  K_2 = \left(\begin{array}{c} K_{2,L} \\ K_{2,C} \\K_{2,R} \\ K_{2,V}\end{array}\right) \]
with $K_{2,J}\in\mathbb{R}^{n_J,n-m}, J\in\{ L,C,R,V\}$ is the Loop Matrix for the set of $n_L$ inductors, $n_C$ capacitors, $n_R$ resistors, and $n_V$ voltage sources, respectively.
The Fundamental Loop Matrix provides the Kirchhoff voltage constraints as $K_2^T u = 0$. An alternative expression of the Kirchhoff voltage constraints is given by $K\hat{u}=u$, where $\hat{u}$ are the node voltages of the circuit. By $u\in ker(K_2^T)$ and $u\in im(K)$ it follows directly $ker(K_2^T) = im(K)$ and thus $im(K_2)\perp im(K)$.

\subsection{Geometric setting}

Using a geometric approach for analyzing circuits, we define the configuration manifold to be the \emph{charge space} $Q\subseteq \mathbb{R}^n$ of circuit branches with points on the manifold denoted by $q \in Q$.
For a particular charge configuration $q$, the tangent bundle $TQ$ is the \emph{current space} with currents $v\in T_qQ \subseteq \mathbb{R}^n$ passing through the branches. The corresponding cotangent bundle $T^*Q$ is the \emph{flux linkage space} with the flux linkages $p\in T_q^*Q \subseteq \mathbb{R}^n $. Note that due to the analogon of the quantities, configuration, velocity and momentum in mechanical systems we stick with the notation $(q,v,p)$ for charge, current and flux linkage. The branch voltages $u$ are the analogon of forces for the mechanical system and are thus assumed to be covectors in the cotangent space $T_q^*Q$.

Let $\Delta_Q \subset TQ$ be a constraint distribution, which is locally given by
\begin{equation}\label{eq:distribution}
\Delta_Q(q) = \{ v\in T_qQ \,| \, \langle w^a,v \rangle =0,\, a=1,\ldots,m\} \subset T_qQ
\end{equation}
with the natural pairing $\langle \cdot, \cdot \rangle: T_q^*Q \times T_qQ\rightarrow \mathbb{R}$ of cotangent and tangent vectors.
$w^a$ are $m$ independent one-forms that form the basis for the annihilator $\Delta^0_Q(q)\subset  T^*Q$ which is locally given by
\begin{equation}\label{eq:annihilator}
\Delta^0_Q(q) = \{ w\in T^*_qQ \,| \, \langle w, v\rangle =0 \; \forall v \in \Delta_Q(q)\} \subset T^*_qQ.
\end{equation}

Using the matrix $K^T$ as local coordinate representation for the one-forms $w^a$, the distribution~\eqref{eq:distribution} forms the \emph{constraint KCL space} given by the submanifold
\[\Delta_Q(q) = \{ v\in T_qQ \,| \, K^T v =0\} \subset T_qQ,\]
that is spanned by $ker(K^T)$. Its annihilator $\Delta^0_Q(q)$ can thus locally expressed by the image $im(K)$ of $K$. Chooing this coordinate representation and with $ker(K_2^T) = im(K)$, the annihilator~\eqref{eq:annihilator} decribes the \emph{constraint KVL space} by
\[\Delta^0_Q(q) = \{ u\in T^*_qQ \,| \, K_2^T u =0 \} \subset T^*_qQ\]

Note, that the choices of $K$ and $K_2$ are in general not unique. The only design criterium for $K_2$ is the condition $im(K)\perp im(K_2)$. Alternative to \eqref{eq:meshmatrix}, a matrix $K_2$ can be constructed using a QR-decomposition of $K$. Thus, this approach is not restricted to cases, where the mesh topology is obvious as for planar graphs.
However, in the following we work with the Fundamental Loop matrix as candidate for the matrix $K_2$ due to the physical interpretation.

From a geometric point of view we can distinguish between three different spaces: Let $\mathfrak{B}$ denote the space of branches, $\mathfrak{M}$ the space of meshes and $\mathfrak{N}$ the space of nodes, where we exclude the one node defined as ground. $(q,v,p,u)$ denote the branch charges, currents, flux linkages, and voltages, and $(\tilde q,\tilde v,\tilde p,\tilde u)$ and $(\hat q,\hat v,\hat p,\hat u)$ the corresponding quantities in mesh and node space, respectively. From KCL and KVL we know that the node currents (and charges) as well as the mesh voltages are zero. For $\mathfrak{M}$ and $\mathfrak{N}$, we define the corresponding configuration, tangent and cotangent spaces $M\subseteq\mathbb{R}^{n-m},T_{\tilde{q}}M\subseteq\mathbb{R}^{n-m},T_{\tilde{q}}^*M\subseteq\mathbb{R}^{n-m}$ and $N \subseteq\mathbb{R}^m,T_{\hat{q}}N\subseteq\mathbb{R}^m,T_{\hat{q}}^*N\subseteq\mathbb{R}^m$.
Then, branch, loop and node space are defined to be the Pontryagin bundle consisting of the direct sum of tangent and cotangent space, i.e.~$\mathfrak{B} = \Delta_Q \oplus \Delta_Q^0$, $\mathfrak{M}= TM \oplus T^*M$ and $\mathfrak{N}=TN \oplus T^*N$.

The following diagram gives the relation between the defined spaces in terms of the Kirchhoff Constraint matrix $K$ and the Fundamental Loop matrix $K_2$
{\large
\begin{equation}\label{eq:spaces}
\begin{CD}
T^*N @>K>> \Delta^0_Q @>K_2^T>> T^*M \\[-3pt]
\mathfrak{N} && \mathfrak{B} && \mathfrak{M} \\[-10pt]
\text{{\normalsize nodes}}  && \text{{\normalsize branches}}&& \text{{\normalsize meshes}}\\[-3pt]
TN @<<K^T< \Delta_Q @<<K_2< TM \\
&& &&
 \end{CD}
\end{equation}
}
with the linear maps $K^T: \Delta_Q \rightarrow TN$ and $K_2: TM \rightarrow \Delta_Q$, and their adjoints $K: T^*N \rightarrow \Delta_Q^0$ and $K_2^T: \Delta_Q^0 \rightarrow T^*M$.
As already stated above, the branch currents consistent with KCL are determined by $ker(K^T)$, where the branch voltages consistent with KVL are given by $ker(K_2^T)$.
On the other hand, from diagram \eqref{eq:spaces} we can directly follow, that the set of branch currents fulfilling the KCL can alternatively expressed as $v=K_2 \tilde{v}$, whereas the set of branch voltages that fulfill the KVL are in the image of $K$ as $u = K\hat{u}$.
These are the standard relations between
branch currents $v(t)$ and mesh currents $\tilde{v}(t)$ and
 branch voltages $u(t)$ and node voltages $\hat{u}(t)$, respectively, given by KCL and KVL.

Note, that diagram~\eqref{eq:spaces} represents the general relations between the tangent bundles $TQ, TN$, and $TM$, and the corresponding cotangent bundles. The matrices $K$ and $K_2$ are local coordinate choices for the embeddings of the different spaces. These are not unique, however using different coordinates representations, the submanifolds $TN, T^*N, TM$, and $T^*M$ loose their physical meaning. 

Following the lines of \cite{YoMa2006b} the tangent space at $q$ can be splitted such that $T_qQ=\mathcal{H}_q \oplus \mathcal{V}_q$, where $\mathcal{H}_q = \Delta_q(q)$ is the \emph{horizontal space} and $\mathcal{V}_q$ the \emph{vertical space} at $q$. The matrix $K^T$ is a local matrix representation of the \emph{Ehresmann connection} $A_q: T_q Q \rightarrow \mathcal{V}_q$.

\begin{remark}
A branch can consist of more than one circuit element in a row. In this case, the branch voltage is assumed to be the sum of the voltages of all elements in this branch.
\end{remark}

\section{Variational formulation for electric circuits}\label{sec:varcirc}

In the following, we derive the equations of motion for the circuit system, making use of variational principles known in mechanics. Due to the constrained system (via KCL and KVL), we present two different variational formulations that distinguish the way the constraints are involved.

\subsection{Constrained variational formulation}

We can define a Lagrangian $\mathcal{L}: TQ \rightarrow \mathbb{R}$ of the circuit system consisting of the difference between magnetic and electric energy as
\begin{equation}\label{Lagrangian}
\mathcal{L}(q,v) = \frac{1}{2} v^T L v - \frac{1}{2} q^T C q
\end{equation}
with $L = \text{diag}(L_1,\ldots,L_n)$ and $C= \text{diag}\left(\frac{1}{C_1},\ldots,\frac{1}{C_n}\right)$.
In the case where no inductor (resp.~no capacitor) is on branch $i$, the corresponding entry $L_i$ (resp.~$\frac{1}{C_i}$) in the matrix $L$ (resp.~$C$) is zero.
In the presence of mutual inductors rather than self inductors, the matrix $L$ is not diagonal anymore, but always positive semi-definite. If not explicitly mentioned the following theory and construction is also valid for mutual inductors. 
The Legendre transform $\mathbb{F}\mathcal{L}: TQ \rightarrow T^*Q$ is defined by
\begin{equation}
\mathbb{F}\mathcal{L}(q,v) = (q,\partial \mathcal{L}/ \partial v) = (q,Lv).
\end{equation}
Note that the Lagrangian can be degenerate if the Legendre transform is not invertible, i.e.~$L$ is singular.
The \emph{constraint flux linkage subspace}\footnote{also denoted by the set of primary constraints} is defined by the Legendre transform as
\[
P = \mathbb{F}\mathcal{L}(\Delta_Q) \subset T^*Q,
\]
where $\Delta_Q \subset TQ$ is the distribution.
The Lagrangian force of the system consists of a damping force that results from the resistors and an external force being the voltage sources
\begin{equation}\label{eq_fl}
f_L(q,v,t) = -\text{diag}(R)v + \text{diag}(\mathcal{E}) u
\end{equation}
with $R=(R_1,\ldots,R_n)^T$ and $\mathcal{E}=(\epsilon_1,\ldots,\epsilon_n)^T$. If no resistor is on branch $i$, the corresponding entry $R_i$ in the vector $R$ is zero. For the entries of the vector $\mathcal{E}$, it holds $\epsilon_i=0$ if no voltage source is on branch $i$ and $\epsilon_i=1$ otherwise. Here we assume that the time evolution of the voltage sources is given as time dependent function $u_s(t)$. Thus, in the following, we replace $\text{diag}(\mathcal{E}) u$ by $u_s(t)$ for a given function $u_s:[0,T]\rightarrow \mathbb{R}^n$.

To derive the equations of motion for the circuit system, we make use of the Lagrange-d'Alembert-Pontryagin principle, i.e.~we are searching for curves $q(t)$, $v(t)$ and $p(t)$ fulfilling
\begin{equation}
\delta \int_0^T \mathcal{L}({q}(t),{v}(t)) + \left\langle {p}(t),\dot{{q}}(t) - {v}(t) \right\rangle \,dt + \int_0^T f_L({q}(t),{v}(t),t) \cdot \delta q(t) \,dt=0
\end{equation}
with fixed initial and final variations $\delta q(0)=\delta q(T)=0$ and constrained variations $\delta q \in \Delta_Q(q)$.

Taking variations gives us
\begin{equation}
\int_0^T \left [ \left\langle  \frac{\partial \mathcal{L}}{\partial q}+ f_L, \delta q \right\rangle    - \langle \dot{{p}}, \delta {q} \rangle + \left\langle \delta p ,\dot{{q}} - {v}  \right\rangle + \left\langle \left( \frac{\partial \mathcal{L}}{\partial v} \right)- {p},  \delta{v} \right\rangle \right] \, dt =0
\end{equation}
for arbitrary variations $\delta{v}$ and $\delta {p}$, $K^Tv = 0$ and constrained variations $\delta q \in \Delta_Q(q)$. This leads to the constrained Euler-Lagrange equations
\begin{subequations}\label{eq:EL_full}
\begin{align}
 \frac{\partial \mathcal{L}}{\partial q} -\dot{p} + f_L & \in \Delta^0_Q(q) \label{EL_K_1}\\
\dot{{q}} & =  {v}\label{EL_K_2}\\
 \frac{\partial \mathcal{L}}{\partial v} -{p}  & =0\label{EL_K_3}\\
 K^T v & = 0. \label{EL_K_4}
\end{align}
\end{subequations}
For the Lagrangian (\ref{Lagrangian}) and the forces \eqref{eq_fl}, the constrained Euler-Lagrange equations are
\begin{subequations}\label{eq:EL_circuit_full}
\begin{align}
\dot{ p} & = -C q  - \text{diag}(R) {v} +u_s + K \lambda \label{eq:EL_circuit_full_p} \\
\dot{{q}} & = {v}\\
{p} & =  L{v} \label{eq:EL_circuit_full_3}\\
K^T v & = 0,
\end{align}
\end{subequations}
where $\lambda$ represent the node voltages $\hat{u}\in T^*N$. Thus, the first line corresponds to the KVL equations of the form $K\hat{u} = u$, and the last line are the KCL equations.
System \eqref{eq:EL_circuit_full} is a differential-algebraic system with differential variables $q$ and $p$ and algebraic variables $v$ and $\lambda$. The involvement of the function $u_s(t)$ makes the system a non-autonomous system. Equation \eqref{eq:EL_circuit_full_3} (also denoted by primary constraints) reflects the degeneracy of the Lagrangian system: since $\mathbb{F}\mathcal{L}$ is not invertible (i.e.~$L$ is singular), we can not eliminate the algebraic variable $v$ to obtain a purely Hamiltonian formulation.
However, in the next step, we eliminate the algebraic variable $\lambda$ by the use of a reduced constrained variational principle.

\subsection{Reduced constrained variational formulation}\label{proj_method}

With the following reduced principle, we derive a slightly different form of the resulting differential-algebraic system. This reduced formulation is advantageous from different perspectives:
First, the reduced formulation is less redundant, such that the Lagrange multipliers are eliminated and the state space dimension is reduced. Second, for specific circuits, the degeneracy of the Lagrangian is canceled. Third, the reduced state space still has a physical and geometric interpretation: The reduced Lagrangian is defined on the mesh space $TM\subseteq\mathbb{R}^{2(n-m)}$ rather than on the branch space $TQ\subseteq\mathbb{R}^{2n}$.

For the reduction, instead of treating the KCL as extra constraint in the form $K^T v = 0$, we directly involve the KCL form $K_2 \tilde{v}=v$ with $\tilde{v}\in T_qM\subseteq\mathbb{R}^{n-m}$ for the definition of the new Lagrangian system.
Since $K$ is constant, the constraints are integrable, i.e.~the configurations $q$ are constrained to be in the submanifold
\[C = \{ q\in Q \,| \, K^T q =0\}\]
for consistent initial values $q_0\in C$.
This simply means that topological relationships that apply for currents will also hold for charges to within a constant vector.
Then, it holds $T_qC = \Delta_Q(q)$ and the branch charges $q$ can be expressed by the mesh charges $\tilde{q}\in M \subseteq\mathbb{R}^{n-m}$ as $q=K_2\tilde{q}$.
We define the constrained Lagrangian $\mathcal{L}^M:TM \rightarrow \mathbb{R}$ via pullback as
$\mathcal{L}^M := K_2^* \mathcal{L} : TM \rightarrow \mathbb{R}$ with
\begin{equation}\label{eq:redL}
\mathcal{L}^M(\tilde{q},\tilde{v}) = \mathcal{L}(K_2 \tilde{q}, K_2\tilde{v})=   \frac{1}{2} \tilde v^TK_2^T L K_2\tilde v - \frac{1}{2} \tilde q^TK_2^T C K_2 \tilde q
\end{equation}
with the Legendre transform $\mathbb{F}\mathcal{L}^M : TM \rightarrow T^*M$
\[
\mathbb{F}\mathcal{L}^M(\tilde q, \tilde v) = (\tilde q, \partial \mathcal{L}^M/\partial \tilde{v}) = (\tilde q, K_2^TLK_2 \tilde v ).
\]
Depending on the inductor matrix $L$ and the graph topology, the matrix $K_2^T LK_2$ can still be singular, i.e.~the Lagrangian system can still be degenerate.
The cotangent bundle $T^*M$ is given by
\begin{align*}
T^*M &= \{ (\tilde q, \tilde p) \in \mathbb{R}^{n-m,n-m} \, | \, (\tilde q,\tilde p) = \mathbb{F}\mathcal{L}^M(\tilde q,\tilde v)  \,\text{with}\,(\tilde q,\tilde v)\in TM \}\\
&  =  \{ (\tilde q, \tilde p) \in \mathbb{R}^{n-m,n-m} \, | \, (\tilde q,\tilde p) = (\tilde q, K^T_2 p)  \,\text{with}\,p \in P \}.
\end{align*}
Thus, the constrained force $f_L^M$ in $T^*M$ is defined as
\begin{equation}\label{eq:consforce}
f_L^M(\tilde q, \tilde v,t) = K_2^T f_L(K_2 \tilde q,K_2 \tilde v,t) = -K_2^T \text{diag}(R) K_2 \tilde{v} + K_2^T u_s(t).
\end{equation}
With $\tilde{p}\in T_{\tilde q}^*M \subset \mathbb{R}^{n-m}$ given as $\tilde{p} = K_2^Tp$ we obtain the following reduced Lagrange-d'Alembert-Pontryagin principle
\begin{equation}
\delta \int_0^T \mathcal{L}^M(\tilde{q}(t),\tilde{v}(t)) + \left\langle \tilde{p}(t),\dot{\tilde{q}}(t) - \tilde{v}(t) \right\rangle \,dt + \int_0^T f^M_L(\tilde{q}(t),\tilde{v}(t),t) \cdot \delta \tilde q(t) \,dt=0
\end{equation}
with fixed initial and final variations $\delta \tilde{q}(0)=\delta \tilde{q}(T)=0$.
Taking variations gives us
\begin{equation}
\int_0^T \left[ \left\langle  \frac{\partial \mathcal{L}^M}{\partial \tilde q}+ f^M_L, \delta \tilde q \right\rangle    - \langle \dot{\tilde{p}}, \delta \tilde{q} \rangle + \left\langle \delta \tilde{p} , \dot{\tilde{q}} - \tilde{v} \right\rangle + \left\langle \left( \frac{\partial \mathcal{L}^M}{\partial \tilde v} \right)- \tilde{p},  \delta\tilde{v} \right\rangle \right]\, dt=0
\end{equation}
for arbitrary variations $\delta \tilde{v}$ and $\delta \tilde{p}$ and $\delta \tilde q$. This results in the reduced Euler-Lagrange equations
\begin{subequations}\label{eq:ELred_general}
\begin{align}
 \frac{\partial \mathcal{L}^M}{\partial \tilde q} -\dot{\tilde p} + f^M_L & =0\label{ELc_K_1}\\
\dot{\tilde{q}} & =  \tilde{v}\label{ELc_K_2}\\
 \frac{\partial \mathcal{L}^M}{\partial \tilde v} -\tilde{p}  & =0.\label{ELc_K_3}
\end{align}
\end{subequations}
For the Lagrangian \eqref{eq:redL} and the forces \eqref{eq:consforce}, the constrained Euler-Lagrange equations are
\begin{subequations}\label{EL_Kcirc}
\begin{align}
\dot{\tilde p} & = K_2^T \left(-CK_2 \tilde q  - \text{diag}(R) K_2 \tilde{v} + u_s\right) \\
\dot{\tilde{q}} & =  \tilde{v}\\
\tilde{p} & = K_2^T LK_2\tilde{v} \label{EL_Kcirc_3}.
\end{align}
\end{subequations}
Here, the first equation is now the KVL in the form $K_2^T u = 0$, in which the KCL in the form $K_2 \tilde{v} = v$ is also involved. System \eqref{EL_Kcirc} is a differential-algebraic system with differential variables $\tilde{q}$ and $\tilde{p}$ and algebraic variables $\tilde{v}$. The algebraic equation \eqref{EL_Kcirc_3} is the Legendre transformation of the system. If this is invertible (i.e.~the matrix $K_2^TLK_2$ is regular), the algebraic variable $v$ can be eliminated. In this case, the Euler-Lagrange equations \eqref{EL_Kcirc} represent a non-degenerate Lagrangian system.

In the following proposition we show for which cases the reduced Lagrangian system is non-degenerate for LC circuits, i.e.~for which cases the KVL cancels the degeneracy. The statements for RCL and RCLV circuits can be derived in an analogous way (see Remark \ref{rem:prop}b)).

\begin{proposition}\label{prop:degeneracy}
For LC circuits (including only self inductors), the system is non-degenerate if the number of capacitors equals the number of independent constraints involving the currents through the capacitives' branches.
\end{proposition}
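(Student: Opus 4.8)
The plan is to reduce the non-degeneracy question to a rank condition on the inductor block $K_{2,L}$ of the Fundamental Loop matrix, and then translate that condition into the stated count via the duality $im(K_2)=ker(K^T)$. First I would note that, by \eqref{EL_Kcirc_3}, the reduced system is non-degenerate exactly when the reduced inductance matrix $K_2^TLK_2$ is invertible. For an LC circuit $L=\text{diag}(L_1,\dots,L_n)$ is positive semidefinite with $L_i>0$ on the $n_L$ inductor branches and $L_i=0$ on the $n_C$ capacitor branches; writing $K_{2,L}$ for the rows of $K_2$ on the inductor branches and $L_L\succ 0$ for the corresponding positive-definite diagonal block, the capacitor rows are killed by $L$ and one gets $K_2^TLK_2=K_{2,L}^TL_LK_{2,L}$. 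Since $L_L$ is positive definite, this matrix is positive semidefinite and is invertible if and only if $ker(K_{2,L})=\{0\}$, i.e.\ $K_{2,L}$ has full column rank $n-m$. Thus non-degeneracy is equivalent to $ker(K_{2,L})=\{0\}$.

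Next I would compute $\dim ker(K_{2,L})$ by building a linear isomorphism $\Phi:ker(K_{2,L})\to ker(K_C^T)$, $\xi\mapsto K_{2,C}\xi$; the argument hinges on $im(K_2)=ker(K^T)$ and on $rank(K_2)=n-m$ (full column rank), both recorded in diagram \eqref{eq:spaces}. It is well defined because $K_{2,L}\xi=0$ gives $K_2\xi=(0,K_{2,C}\xi)^T\in ker(K^T)$, whence $0=K^TK_2\xi=K_C^TK_{2,C}\xi$, so $K_{2,C}\xi\in ker(K_C^T)$. It is injective because $\Phi(\xi)=0$ together with $K_{2,L}\xi=0$ forces $K_2\xi=0$, hence $\xi=0$ as $K_2$ has full column rank. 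It is surjective because for $\eta\in ker(K_C^T)$ the branch vector $(0,\eta)^T$ satisfies $K^T(0,\eta)^T=K_C^T\eta=0$, so it lies in $ker(K^T)=im(K_2)$ and equals $K_2\xi$ for a unique $\xi\in ker(K_{2,L})$ with $\Phi(\xi)=\eta$. Therefore $\dim ker(K_{2,L})=\dim ker(K_C^T)=n_C-rank(K_C)$.

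Finally I would conclude that non-degeneracy holds if and only if $n_C=rank(K_C)$. Because the capacitor currents enter the KCL constraints $K^Tv=0$ only through the block $K_C^Tv_C$, the number of independent KCL constraints that genuinely involve the capacitor-branch currents is exactly $rank(K_C^T)=rank(K_C)$, so the identity $n_C=rank(K_C)$ is precisely the assertion that the number of capacitors equals the number of independent constraints involving those currents. I expect the main obstacle to be the surjectivity of $\Phi$: this is the only place where the global topological relation $im(K_2)=ker(K^T)$ is essential, whereas the block computation alone yields only the inclusion $\Phi(ker(K_{2,L}))\subseteq ker(K_C^T)$. A secondary care point is to make the verbal phrase rigorous -- arguing that ``independent constraints involving the capacitor currents'' is correctly measured by $rank(K_C)$ -- and to use the strict positivity $L_L\succ0$ (not merely $L\succeq0$), which is what lets invertibility collapse to a rank condition.
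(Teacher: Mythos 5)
Your proof is correct, and it is a genuine refinement of the paper's argument rather than a copy of it. The paper proves only the stated (sufficient) direction: from $n_C=\mathrm{rank}(K_C^T)$ it gets $\ker(K_C^T)=\{0\}$, deduces $\mathcal{R}(K_2)\cap \ker(L)=\{0\}$, and then invokes the Gram-matrix trick with $\sqrt{L}$ (writing $K_2^TLK_2=K_2^T\sqrt{L}^T\sqrt{L}K_2$ and using $\|\sqrt{L}K_2y\|_2=0\Rightarrow y=0$) to conclude invertibility. You instead factor through the inductor block, $K_2^TLK_2=K_{2,L}^TL_LK_{2,L}$ with $L_L\succ 0$, reduce non-degeneracy to $\ker(K_{2,L})=\{0\}$, and then compute that kernel exactly via the isomorphism $\ker(K_{2,L})\cong\ker(K_C^T)$, $\xi\mapsto K_{2,C}\xi$. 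The underlying structural inputs are identical in both proofs -- $\mathcal{R}(K_2)=\ker(K^T)$, full column rank of $K_2$, and positive definiteness of $L$ on inductor branches; indeed, the paper's set $\mathcal{R}(K_2)\cap\ker(L)$ is precisely the image under $K_2$ of your $\ker(K_{2,L})$, so your isomorphism makes explicit what the paper leaves implicit. What your route buys is strictly more: the dimension formula $\dim\ker(K_2^TLK_2)=n_C-\mathrm{rank}(K_C)$ turns the proposition into a biconditional (non-degeneracy holds \emph{if and only if} $n_C=\mathrm{rank}(K_C)$) and quantifies the degree of degeneracy otherwise, which connects nicely to the paper's informal remark that circuit topology determines the degree of degeneracy; it also avoids the square-root device, which in the paper is stated slightly loosely (the diagonal entries $L_i$ are not all positive for an LC circuit, since capacitor branches carry $L_i=0$). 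What the paper's version buys is brevity for the one-directional claim actually asserted. Your closing care points are well placed: surjectivity of $\Phi$ is indeed the only step needing the global identity $\ker(K^T)=\mathcal{R}(K_2)$ rather than just $K^TK_2=0$, and formalizing the verbal hypothesis as $n_C=\mathrm{rank}(K_C^T)$ is exactly the reading the paper adopts (its $l_C$ with $n_C=l_C\Rightarrow \mathrm{rank}(K_C^T)=n_C$).
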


\begin{proof}
We have to show that $ker(K_2^TLK_2)=\{0\}$. Let $n_C$ be the number of capacitors, $m$ the number of Kirchhoff Constraints such that $K_C^T\in\mathbb{R}^{m,n_C}$. Let $l_C\le m$ be the number of independent constraints involving the currents through the capacitives branches. With $n_C = l_C \le m$ we have $rank(K_C^T)=n_C$, thus $ker(K_C^T)=\{0\}$.
On the other hand, it holds
\[ ker(L) = \{ v \in T_qQ \, | \, v_L =0 \}\]
and
\[\mathcal{R}(K_2) = ker(K^T) = \{ v \in T_qQ \, | \, K^T v =0 \} = \{ v \in T_qQ \, | \, (K_L^T \, K^T_C) \left(\begin{array}{c} v_L \\ v_C \end{array}\right) =0 \}\]
With $ker(K_C^T)=\{0\}$ this results in
\begin{equation}\label{eq:RK_2}
\mathcal{R}(K_2) \cap ker(L) = \{ v \in T_qQ \, | \, K_C^T v_C =0 \}=\{0\}.
\end{equation}
and thus $ker(LK_2)= \{ 0 \}$.
Since $L$ is a diagonal matrix, we can split $K_2^TLK_2$ into $K_2^T \sqrt{L}^T \sqrt{L} K_2$, where $\sqrt{L}$ corresponds to the diagonal matrix with diagonal elements $\sqrt{L_i}$, $L_i>0, \,i=1,\ldots,n$.
Since $K_2$ has full column rank, we know with \eqref{eq:RK_2} that $\sqrt{L}K_2$ also has full column rank. It follows for $y\in \mathbb{R}^{n-m}$ and $y\in ker(K_2^TLK_2)$
\[
K_2^TLK_2 y = 0 \Rightarrow y^T K_2^TLK_2 y = 0 \Leftrightarrow y^T K_2^T \sqrt{L}^T \sqrt{L} K_2 y = 0 \Leftrightarrow \| \sqrt{L} K_2 y \|_2 = 0
\]
and thus $y=0$ since $ker(\sqrt{L}K_2)= \{ 0 \}$.
We therefore have $ker(K_2^TLK_2) = ker(\sqrt{L}K_2) =\{0\}$ and the matrix $K_2^TLK_2$ is invertible.
\end{proof}

\begin{remark}\label{rem:prop}
\begin{itemize}
\item[a)]Intuitively spoken, the degeneracy of the original Lagrangian is due to the lack of magnetic energy terms for the capacitors. With each independent constraint on the capacitor currents, one degree of freedom of the system can be removed. Hence, as many capacitors constraints are required to remove the capacitor current (cf.~\cite{ClSch2003}).
\item[b)] In addition, for a RLC (resp.~RCLV) non-degenerate circuit, the number of resistors (resp.~and voltage sources) has to equal the number of independent constraints involving the currents through the resistor (resp.~and voltage source) branches.
\end{itemize}
\end{remark}

\begin{theorem}[Equivalence]\label{th:equivalence}
The system \eqref{eq:EL_full} and the reduced system \eqref{eq:ELred_general} are equivalent in the following sense:
\begin{itemize}
\item[(i)] Let $(\tilde{q},\tilde{p},\tilde{v})$ be a solution of the reduced system \eqref{eq:ELred_general} and let $q=K_2\tilde{q}, v=K_2\tilde{v}$ and $(q,p)=\mathbb{F} \mathcal{L}(q,v)$. Then $(q,v,p)$  is a solution to system \eqref{eq:EL_full} and it holds $\tilde{p} = K_2^T {p}$.
\item[(ii)] Let $(q,v,p)$ be a solution to system \eqref{eq:EL_full} and $\tilde{q}= K_2^+ q, \tilde{v}= K_2^+ v$ and let $\tilde{p} = K_2^T {p}$ with the well-defined pseudo-inverse $K_2^+$ of $K_2$ (with $K_2^+ K_2 = I$). Then $(\tilde{q},\tilde{p},\tilde{v})$ is a solution of the reduced system \eqref{eq:ELred_general}.
\end{itemize}
\end{theorem}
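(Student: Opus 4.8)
The plan is to exploit the pullback structure relating the two systems. Since $\mathcal{L}^M = K_2^*\mathcal{L}$ and $f_L^M = K_2^T f_L$, the chain rule immediately yields the three identities
\begin{equation*}
\frac{\partial \mathcal{L}^M}{\partial \tilde q} = K_2^T \frac{\partial \mathcal{L}}{\partial q}, \qquad \frac{\partial \mathcal{L}^M}{\partial \tilde v} = K_2^T \frac{\partial \mathcal{L}}{\partial v}, \qquad f_L^M = K_2^T f_L,
\end{equation*}
all evaluated at $(q,v) = (K_2\tilde q, K_2 \tilde v)$. Together with the two structural facts from Section~\ref{sec:elecirc} --- namely $K_2^T K = 0$ (from $im(K)\perp im(K_2)$) and $\Delta_Q^0(q) = im(K) = ker(K_2^T)$ --- and the defining property $K_2^+ K_2 = I$ of the pseudo-inverse, both implications reduce to short algebraic verifications.

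For direction (i) I would set $q = K_2\tilde q$, $v = K_2\tilde v$, $p = \partial\mathcal{L}/\partial v(q,v)$ and check the four lines of \eqref{eq:EL_full} one at a time. Equation \eqref{EL_K_3} holds by definition of $p$; \eqref{EL_K_2} follows from $\dot q = K_2\dot{\tilde q} = K_2\tilde v = v$ using \eqref{ELc_K_2}; and the KCL constraint \eqref{EL_K_4} is immediate since $K^T v = K^T K_2\tilde v = 0$. The relation $\tilde p = K_2^T p$ drops out of \eqref{ELc_K_3} via the Legendre chain rule. The one substantive step is \eqref{EL_K_1}: I would apply $K_2^T$ to the residual $\partial\mathcal{L}/\partial q - \dot p + f_L$ and, using the chain-rule identities and $\dot{\tilde p} = K_2^T \dot p$, recognize the result as exactly the left-hand side of \eqref{ELc_K_1}, hence zero. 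Because $ker(K_2^T) = im(K) = \Delta_Q^0(q)$, annihilation by $K_2^T$ is precisely the geometric membership $\partial\mathcal{L}/\partial q - \dot p + f_L \in \Delta_Q^0(q)$, which simultaneously produces the multiplier $\lambda$ (the node voltages) with $\partial\mathcal{L}/\partial q - \dot p + f_L = K\lambda$.

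For direction (ii) I would set $\tilde q = K_2^+ q$, $\tilde v = K_2^+ v$, $\tilde p = K_2^T p$. The key preliminary observation is that $q$ and $v$ both lie in $\mathcal{R}(K_2)$: for $v$ this is the KCL constraint $K^T v = 0$, i.e.\ $v \in ker(K^T) = \mathcal{R}(K_2)$, and for $q$ it follows from the integrated constraint $q \in C = \{q \mid K^T q = 0\}$ guaranteed by consistent initial data. Consequently $K_2 \tilde q = q$ and $K_2\tilde v = v$, so the defining relations of the reduced Lagrangian and force match the full ones. Then \eqref{ELc_K_3} and \eqref{ELc_K_2} follow by applying $K_2^T$ (resp.\ $K_2^+$) to \eqref{EL_K_3} and \eqref{EL_K_2}, and \eqref{ELc_K_1} follows by applying $K_2^T$ to \eqref{EL_K_1}: writing the full residual as $K\lambda$ and using $K_2^T K = 0$ kills the multiplier, leaving exactly the reduced Euler-Lagrange expression.

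I expect the main obstacle to be direction (i), specifically the membership $\partial\mathcal{L}/\partial q - \dot p + f_L \in \Delta_Q^0(q)$, since everything hinges on the identification $\Delta_Q^0(q) = ker(K_2^T)$ that converts the geometric inclusion into the checkable linear condition $K_2^T(\cdot) = 0$. A secondary point requiring care is well-definedness in (ii): one must invoke integrability of the constraint and the consistent initial condition $q_0 \in C$ to ensure $q \in \mathcal{R}(K_2)$, so that $K_2 K_2^+$ acts as the identity on $q$ rather than merely as the orthogonal projector onto $\mathcal{R}(K_2)$.
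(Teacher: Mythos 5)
Your proposal is correct and follows essentially the same route as the paper's own proof: the pullback/chain-rule identities $\partial\mathcal{L}^M/\partial\tilde q = K_2^T\,\partial\mathcal{L}/\partial q$ and $\partial\mathcal{L}^M/\partial\tilde v = K_2^T\,\partial\mathcal{L}/\partial v$, the identification $ker(K_2^T)=im(K)=\Delta^0_Q(q)$ to convert the geometric inclusion into $K_2^T(\cdot)=0$, and application of $K_2^T$ (resp.\ $K_2^+$) to pass between the full and reduced systems. Your explicit check in part (ii) that $q$ and $v$ lie in $\mathcal{R}(K_2)$ (via KCL and the integrated constraint $q\in C$), so that $K_2K_2^+$ acts as the identity on them and the chain-rule identities are evaluated at the right point, is a well-definedness detail the paper leaves implicit.
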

\begin{proof}
\begin{itemize}
\item[(i)]
Assume $(\tilde{q},\tilde{p},\tilde{v})$ is a solution of \eqref{eq:ELred_general}. 
From the assumption $p=\mathbb{F} L(q,v)$ it follows $p-\frac{\partial}{\partial {v}} \mathcal{L}(q,v)=0$. With $\mathcal{L}^M = K_2^* \mathcal{L}$ it holds
\[
\frac{\partial \mathcal{L}^M}{\partial \tilde{q}}(\tilde{q},\tilde{v}) = \frac{\partial \mathcal{L}}{\partial \tilde{q}} (K_2\tilde{q},K_2\tilde{v}) = \left(\frac{\partial q}{\partial \tilde{q}}\right)^T \frac{\partial  \mathcal{L} }{\partial {q}}(K_2\tilde{q},K_2\tilde{v})=  K_2 ^T \frac{\partial \mathcal{L}}{\partial {q}} (q,v).
\]
Similarly, it holds $\displaystyle \frac{\partial \mathcal{L}^M}{\partial \tilde{v}}(\tilde{q},\tilde{v}) = K_2 ^T \frac{\partial  \mathcal{L}}{\partial {v}}(q,v)$ and thus it follows 
\[
\tilde{p}=\displaystyle \frac{\partial \mathcal{L}^M}{\partial \tilde{v}}(\tilde{q},\tilde{v})=K_2 ^T \frac{\partial \mathcal{L}}{\partial {v}} (q,v) = K_2^T p.
\]
Together with \eqref{eq:consforce}, this gives
\[
K_2^T \dot{p} = \dot{\tilde{p}} = \frac{\partial \mathcal{L}^M}{\partial \tilde{q}} + f^M_L = K_2^T\left(\frac{\partial \mathcal{L}}{\partial {q}} + f_L\right) \Rightarrow K_2^T \left(\frac{\partial \mathcal{L}}{\partial {q}} -\dot{p} +f_L  \right) =0 \Rightarrow \frac{\partial \mathcal{L}}{\partial {q}} -\dot{p} +f_L \in ker(K_2^T).
\]
With $ker(K_2^T) = im(K)$, it follows 
\[\frac{\partial \mathcal{L}}{\partial {q}} -\dot{p} +f_L  \in im(K) = \Delta_Q^0(q)\]
as can be seen from diagram \eqref{eq:spaces}. 
Furthermore, we
have
\[
\dot{q} = K_2 \dot{\tilde{q}} = K_2 \tilde{v} = v.
\]
and since it holds $v=K_2 \tilde{v}$, it follows from diagram \eqref{eq:spaces} $K^T v =0$. Both expressions are equivalent formulations of the KCL. 
\item[(ii)]
Now assume $({q},{p},{v})$ is a solution of \eqref{eq:EL_full}.  With $ker(K_2^T)=im(K)$ and \eqref{eq:consforce}, it follows immediately 
\[
\dot{\tilde{p}} = K_2^T \dot{p} = K_2^T\left(\frac{\partial \mathcal{L}}{\partial {q}}  +f_L\right) = \frac{\partial \mathcal{L}^M}{\partial \tilde{q}} + f^M_L.
\]
Furthermore, from $\dot{q}=v$ we get $K_2^+ \dot{q} =K_2^+ v$ which gives $\dot{\tilde{q}}=\tilde{v}$.
Finally, we have $\displaystyle \tilde{p}=K_2^T p = K_2^T \frac{\partial \mathcal{L}}{\partial v} = \frac{\partial \mathcal{L}^M}{\partial \tilde{v}}$.
\end{itemize}
\end{proof}

\begin{remark}
We require the assumption $(q,p)=\mathbb{F}L(q,v)$ (the fulfillment of the Legendre transformation) in Theorem~\ref{th:equivalence}(i)
for the fulfillment of the relation \eqref{EL_K_3}. 
A unique derivation of $p$ directly from $\tilde{p}$ is in general not possible from $\tilde{p}=K_2^T p$ as it is for $q$ and $v$: Although there is a canonical projection $K_2^T: T^*Q \rightarrow T^*M$,
there is no corresponding canonical embedding of $T^*M$ into $T^*Q$ (see also \cite{MaWe01}).
Assuming $\tilde{p}=K_2^T p$ instead of $(q,p)=\mathbb{F}L(q,v)$ in (i), we only get the relation $K_2^T(p-\partial \mathcal{L} / \partial v)=0$, and \eqref{EL_K_3} may not be fulfilled. 
\end{remark}

\section{Discrete variational principle for electric circuits}\label{sec:disvar}

In this section, we derive a discrete variational principle that leads to a variational integrator for the circuit system. 
Since the solution of the reduced system \eqref{eq:ELred_general} can be easily transformed to a solution of the full system \eqref{eq:EL_full} (Theorem~\ref{th:equivalence}), we restrict the discrete derivation to the reduced case. 
For the case of a degenerate reduced system, the choice of discretization is important to obtain a variational integrator that manages to bypass the difficulty of intrinsic degeneracy and thus is applicable for a simulation.
In this section, three different discretizations are introduced that result in three different discrete variational schemes for which the solvability conditions are derived.
 
For the discrete variational derivation, we introduce a discrete time grid $\Delta t = \{ t_k = kh\, | \,k=0,\ldots,N \}$, $Nh = T$, where $N$ is a positive integer and $h$ the step size.
We replace the charge $\tilde{q}:[0,T]\rightarrow M$, the current $\tilde{v}:[0,T]\rightarrow T_{\tilde{q}}M$ and the flux linkage $\tilde{p}:[0,T]\rightarrow T_{\tilde{q}}^*M$ by their discrete versions $\tilde{q}_d: \{t_k\}_{k=0}^N \rightarrow M$, $\tilde{v}_d: \{t_k\}_{k=0}^N \rightarrow T_{\tilde{q}}M$ and $\tilde{p}_d: \{t_k\}_{k=0}^N \rightarrow T^*_{\tilde{q}}M$, where we view $\tilde{q}_k=\tilde{q}_d(kh)$, $\tilde{v}_k=\tilde{v}_d(kh)$ and $\tilde{p}_k=\tilde{p}_d(kh)$ as an approximation to $\tilde{q}(kh)$, $\tilde{v}(kh)$ and $\tilde{p}(kh)$, respectively.

\subsection{Forward Euler}
We replace the reduced Lagrange-d'Alembert-Pontryagin principle with a discrete version
\begin{equation}\label{eq:lapp_discrete}
\delta \left\{  h \sum\limits_{k=0}^{N-1} \left( \mathcal{L}^M( \tilde{q}_k, \tilde{v}_k) + \left\langle \tilde{p}_k, \frac{\tilde{q}_{k+1}-\tilde{q}_k}{h}- \tilde{v}_k \right\rangle \right) \right\} + h\sum\limits_{k=0}^{N-1} f_L^M(\tilde{q}_k,\tilde{v}_k,t_k) \delta \tilde{q}_k=0,
\end{equation}
where in \eqref{eq:lapp_discrete} the time derivative $\dot{q}(t)$ is approximated by the forward difference operator and the force evaluated at the left point. 

For discrete variations $\delta \tilde{q}_k$ that vanish in the initial and final points as $\delta \tilde{q}_0= \delta \tilde{q}_N=0$ and discrete variations $\delta \tilde{v}_k$ and $\delta \tilde{p}_k$ this gives
\begin{equation}
\begin{array}{c}
\left\langle  {\displaystyle \frac{\partial \mathcal{L}^M}{\partial \tilde v}(\tilde{q}_{0}, \tilde{v}_{0})} -  \tilde{p}_{0} , \delta \tilde{v}_0 \right\rangle + {\displaystyle \sum\limits_{k=1}^{N-1} \left [\left\langle  \frac{\partial \mathcal{L}^M}{\partial \tilde{q}}(\tilde{q}_{k},\tilde{v}_{k}) - \frac{1}{h} (\tilde{p}_{k}-\tilde{p}_{k-1}) + f_L^M(\tilde{q}_k,\tilde{v}_{k},t_k), \delta \tilde{q}_k \right\rangle \right.}\\
\left. \left\langle \delta \tilde{p}_{k-1},{\displaystyle \frac{\tilde{q}_{k}-\tilde{q}_{k-1}}{h}}-\tilde{v}_{k-1}\right\rangle + \left\langle {\displaystyle \frac{\partial \mathcal{L}^M}{\partial \tilde v}(\tilde{q}_{k}, \tilde{v}_{k})}-  \tilde{p}_{k}, \delta \tilde{v}_k  \right\rangle \right]  + \left\langle \delta \tilde{p}_{N-1}, {\displaystyle \frac{\tilde{q}_{N}-\tilde{q}_{N-1}}{h}- \tilde{v}_{N-1} }\right\rangle = 0.
\end{array}
\end{equation}
This leads to the discrete reduced constrained Euler-Lagrange equations
\begin{subequations}\label{eq:backwardLag}
 \begin{align}
 \frac{\partial \mathcal{L}^M}{\partial \tilde v}(\tilde{q}_{0}, \tilde{v}_{0})=  \tilde{p}_{0}&\\
\left.\begin{array}{rl}
{\displaystyle \frac{\partial \mathcal{L}^M}{\partial \tilde{q}}(\tilde{q}_{k},\tilde{v}_{k}) - \frac{1}{h} (\tilde{p}_{k}-\tilde{p}_{k-1}) + f_L^M(\tilde{q}_k,\tilde{v}_{k},t_k)}&=0\\
{\displaystyle \frac{\tilde{q}_{k}-\tilde{q}_{k-1}}{h}}&= \tilde{v}_{k-1}\\
{\displaystyle \frac{\partial \mathcal{L}^M}{\partial \tilde v}(\tilde{q}_{k}, \tilde{v}_{k})}&=  \tilde{p}_{k}
\end{array} \right\} &k=1,\ldots,N-1\\
\frac{\tilde{q}_{N}-\tilde{q}_{N-1}}{h}= \tilde{v}_{N-1}.&
\end{align}
\end{subequations}
For the Lagrangian defined in \eqref{eq:redL} and the Lagrangian forces defined in \eqref{eq:consforce}, this results in
\begin{subequations}
 \begin{align}
  \tilde{p}_0 = K_2^T L K_2\tilde{v}_0&\label{eq:DEL_fd_1}\\
 \left.\begin{array}{rl}\label{eq:DEL_fd}
{\displaystyle \frac{\tilde{p}_k-\tilde{p}_{k-1}}{h} }& = K_2^T \left(-C K_2 \tilde{q}_k- \text{diag}(R)K_2\tilde{v}_k + u_s(t_k)\right) \\[8pt]
{\displaystyle \frac{\tilde{q}_{k}-\tilde{q}_{k-1}}{h}}&=  \tilde{v}_{k-1}\\[8pt]
K_2^T L K_2\tilde{v}_k &= \tilde{p}_k
\end{array} \right\} &k=1,\ldots,N-1\\
\frac{\tilde{q}_{N}-\tilde{q}_{N-1}}{h}= \tilde{v}_{N-1}.\label{eq:DEL_fd_3}&
\end{align}
\end{subequations}
This gives the following update rule:
For given $(\tilde{q}_0,\tilde{v}_0)$, use \eqref{eq:DEL_fd_1} to compute $\tilde{p}_0$. Then, use the iteration scheme
\begin{equation}\label{eq:it_fd}
\left(\begin{matrix}
I & 0 & 0 \\[2pt] 0 & K_2^T L K_2 & -I\\[2pt] h K_2^TCK_2 & h K_2^T \text{diag}(R)K_2 & I
\end{matrix}\right) \left(\begin{matrix} \tilde{q}_k\\[2pt]\tilde{v}_k\\[2pt]\tilde{p}_k\end{matrix}\right) = \left(\begin{matrix}
I & h I & 0 \\[2pt] 0 & 0 & 0\\[2pt] 0 & 0 & I \end{matrix}\right) \left(\begin{matrix} \tilde{q}_{k-1}\\[2pt]\tilde{v}_{k-1}\\[2pt]\tilde{p}_{k-1}\end{matrix}\right) +\left(\begin{matrix} 0\\[2pt]0\\[2pt]hK_2^T\end{matrix}\right) u_s(t_k)\quad \text{for}\; k=1,\ldots,N
\end{equation}
to compute $\tilde{q}_1,\ldots,\tilde{q}_{N}$, $\tilde{v}_1,\ldots,\tilde{v}_{N}$ and $\tilde{p}_1,\ldots,\tilde{p}_{N}$.
\begin{proposition}\label{prop:forw_solve}
System \eqref{eq:it_fd} is uniquely solvable if the matrix $K_2^T (L+h \text{\emph{diag}}(R))K_2$ is regular.
\end{proposition}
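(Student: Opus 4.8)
The plan is to observe that at each step $k$ the relations~\eqref{eq:it_fd} form a linear system for the unknown triple $(\tilde q_k,\tilde v_k,\tilde p_k)$, with the previous iterate and the source term $u_s(t_k)$ entering only on the right-hand side. Hence unique solvability is equivalent to invertibility of the coefficient matrix
\[
A = \begin{pmatrix} I & 0 & 0 \\ 0 & K_2^T L K_2 & -I \\ h K_2^T C K_2 & h K_2^T \text{diag}(R) K_2 & I \end{pmatrix},
\]
and the task reduces to showing that $\det A \neq 0$ precisely when $K_2^T(L + h\,\text{diag}(R))K_2$ is regular. Each block here has size $(n-m)\times(n-m)$.

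First I would proceed by forward substitution. The first block row involves only $\tilde q_k$ and fixes $\tilde q_k = \tilde q_{k-1} + h\tilde v_{k-1}$; the second block row yields $\tilde p_k = K_2^T L K_2\,\tilde v_k$. Substituting both into the third block row collapses it to
\[
\bigl(K_2^T L K_2 + h K_2^T \text{diag}(R) K_2\bigr)\tilde v_k = \tilde p_{k-1} + h K_2^T u_s(t_k) - h K_2^T C K_2\,\tilde q_k,
\]
so $\tilde v_k$, and then $\tilde p_k$, is uniquely determined exactly when $K_2^T(L + h\,\text{diag}(R))K_2$ is invertible, which is the claimed condition.

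Equivalently, and perhaps more cleanly, I would compute $\det A$ by a Schur complement argument. Subtracting $h K_2^T C K_2$ times the first block row from the third block row turns the latter into $(0,\ h K_2^T \text{diag}(R) K_2,\ I)$ without changing the determinant, so $\det A = \det\!\begin{pmatrix} K_2^T L K_2 & -I \\ h K_2^T \text{diag}(R) K_2 & I \end{pmatrix}$. Since the lower-right block is the identity, and hence invertible, its Schur complement is well defined and gives $\det A = \det\bigl(K_2^T L K_2 + h K_2^T \text{diag}(R) K_2\bigr) = \det\bigl(K_2^T(L + h\,\text{diag}(R))K_2\bigr)$; the stated equivalence follows immediately.

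There is no genuine obstacle here, as this is elementary linear algebra. The only points requiring care are keeping track of the block sizes so that the identity blocks are correctly dimensioned, and noting that it is precisely the identity block in the lower-right corner that makes the Schur complement well defined and produces exactly the matrix $K_2^T(L + h\,\text{diag}(R))K_2$ appearing in the statement.
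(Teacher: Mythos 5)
Your proposal is correct and takes essentially the same route as the paper's proof: both eliminate $\tilde q_k$ and $\tilde p_k$ using the first two block rows and thereby reduce unique solvability to the regularity of $K_2^T(L + h\,\text{diag}(R))K_2$, the paper phrasing this as a nullspace computation for the homogeneous system $Az=0$ while you perform the same elimination on the inhomogeneous system (and repackage it as a Schur-complement determinant identity). Your determinant formula $\det A = \det\bigl(K_2^T(L+h\,\text{diag}(R))K_2\bigr)$ makes explicit the same if-and-only-if equivalence that the paper's nullspace argument establishes.
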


\begin{proof}
System \eqref{eq:it_fd} is uniquely solvable if the iteration matrix $A = \left(\begin{matrix}
I & 0 & 0 \\[2pt] 0 & K_2^T L K_2 & -I\\[2pt] h K_2^TCK_2 & h K_2^T \text{diag}(R)K_2 & I
\end{matrix}\right)$ has zero nullspace. For $Az = 0$ for $z=(\tilde{q},\tilde{v},\tilde{p})$, it holds (i) $\tilde{q}=0$, (ii) $\tilde{p} = K_2^TLK_2 \tilde{v}$, (iii) $h K_2^TCK_2 \tilde{q} +  h K_2^T \text{diag}(R)K_2 \tilde{v} + \tilde{p} = 0$. Substituting (i) and (ii) in (iii) gives $K_2^T( L+ h \text{diag}(R) )K_2 \tilde{v}  = 0$.
Thus $z=0$ is the unique solution of $Az=0$ iff $K_2^T( L+ h \text{diag}(R) )K_2$ has zero nullspace.
\end{proof}

\subsection{Backward Euler}
Approximating the time derivative $\dot{q}(t)$ by the backward difference operator rather than by the forward difference operator as
\begin{equation}\label{eq:lapp_discrete_bd}
\delta \left\{  h \sum\limits_{k=1}^{N} \left( \mathcal{L}^M( \tilde{q}_{k}, \tilde{v}_{k}) + \left\langle \tilde{p}_k, \frac{\tilde{q}_{k}-\tilde{q}_{k-1}}{h}- \tilde{v}_k \right\rangle \right) \right\} + h\sum\limits_{k=1}^{N} f_L^M(\tilde{q}_k,\tilde{v}_k,t_k) \delta \tilde{q}_k=0
\end{equation}
with discrete variations $\delta \tilde{q}_k$, that vanish in the initial and final points as $\delta \tilde{q}_0= \delta \tilde{q}_N=0$ and discrete variations $\delta \tilde{v}_k$ and $\delta \tilde{p}_k$ yields
\begin{equation}
\begin{array}{c}
\left\langle  \delta \tilde{p}_ 1 ,{\displaystyle \frac{\tilde{q}_{1}-\tilde{q}_{0}}{h}- \tilde{v}_{1}} \right\rangle + \left\langle {\displaystyle \frac{\partial \mathcal{L}^M}{\partial \tilde v}(\tilde{q}_{1}, \tilde{v}_{1})-\tilde{p}_{1}} ,\delta \tilde{v}_1\right\rangle + {\displaystyle \sum\limits_{k=2}^N\left[ \left\langle \delta\tilde{p}_k ,  \frac{\tilde{q}_{k}-\tilde{q}_{k-1}}{h}- \tilde{v}_{k} \right\rangle    \right.}\\
+\left. \left\langle {\displaystyle \frac{\partial \mathcal{L}^M}{\partial \tilde{q}}(\tilde{q}_{k-1},\tilde{v}_{k-1}) - \frac{1}{h} (\tilde{p}_{k}-\tilde{p}_{k-1}) + f_L^M(\tilde{q}_{k-1},\tilde{v}_{k-1},t_{k-1})}, \delta\tilde{q}_{k-1}  \right\rangle + \left\langle  {\displaystyle \frac{\partial \mathcal{L}^M}{\partial \tilde v}(\tilde{q}_{k}, \tilde{v}_{k})}-  \tilde{p}_{k}, \delta \tilde{v}_k \right\rangle \right] = 0.
\end{array}
\end{equation}
This gives a slight, but in this case significant, modification for the Euler-Lagrange equations as
\begin{subequations}\label{eq:forwardLag}
 \begin{align}
 \frac{\tilde{q}_{1}-\tilde{q}_{0}}{h}= \tilde{v}_{1}&\label{eq:DEL_bd_3}\\
 \frac{\partial \mathcal{L}^M}{\partial \tilde v}(\tilde{q}_{1}, \tilde{v}_{1})=  \tilde{p}_{1}& \label{eq:DEL_bd_1}\\
\left.\begin{array}{rl}
{\displaystyle \frac{\partial \mathcal{L}^M}{\partial \tilde{q}}(\tilde{q}_{k-1},\tilde{v}_{k-1}) - \frac{1}{h} (\tilde{p}_{k}-\tilde{p}_{k-1}) + f_L^M(\tilde{q}_{k-1},\tilde{v}_{k-1},t_{k-1})}&=0\\
{\displaystyle \frac{\tilde{q}_{k}-\tilde{q}_{k-1}}{h}}&= \tilde{v}_{k}\\
{\displaystyle \frac{\partial \mathcal{L}^M}{\partial \tilde v}(\tilde{q}_{k}, \tilde{v}_{k})}&=  \tilde{p}_{k}
\end{array} \right\} &k=2,\ldots,N \label{eq:DEL_bd_2}
\end{align}
\end{subequations}
Note that in contrast to the variational scheme \eqref{eq:backwardLag} consisting of an explicit update for the charges $q$ and an implicit update for the fluxes $p$, we now get an implicit scheme for $q$ and an explicit scheme for $p$.
In particular, for the Lagrangian \eqref{eq:redL} and the forces \eqref{eq:consforce}, we obtain the following update rule:
For given $(\tilde{q}_0,\tilde{v}_0)$ compute $\tilde{p}_0$  via $\tilde{p}_0=K_2^TLK_2 \tilde{v}_0$. Then, use the iteration scheme
\begin{equation}\label{eq:it_bd}
\left(\begin{matrix}
I & -hI & 0 \\[2pt] 0 & K_2^T L K_2 & -I\\[2pt] 0 & 0 & I
\end{matrix}\right) \left(\begin{matrix} \tilde{q}_k\\[2pt] \tilde{v}_k\\[2pt] \tilde{p}_k\end{matrix}\right) = \left(\begin{matrix}
I & 0 & 0 \\[2pt] 0 & 0 & 0\\[2pt] -h K_2^TCK_2 & -h K_2^T \text{diag}(R)K_2 & I \end{matrix}\right) \left(\begin{matrix}  \tilde{q}_{k-1}\\[2pt] \tilde{v}_{k-1}\\[2pt] \tilde{p}_{k-1}\end{matrix}\right) + \left(\begin{matrix} 0\\[2pt]0\\[2pt]hK_2^T\end{matrix}\right) u_s(t_{k-1})\quad \text{for}\; k=1,\ldots,N
\end{equation}
to compute $\tilde{q}_1,\ldots,\tilde{q}_{N}$, $\tilde{v}_1,\ldots,\tilde{v}_{N}$ and $\tilde{p}_1,\ldots,\tilde{p}_{N}$.
\begin{proposition}\label{prop:back_solve}
System \eqref{eq:it_bd} is uniquely solvable if the matrix $K_2^T L K_2$ is regular.
\end{proposition}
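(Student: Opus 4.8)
The plan is to argue exactly as in the proof of Proposition~\ref{prop:forw_solve}: the iteration scheme \eqref{eq:it_bd} is uniquely solvable for every right-hand side precisely when its left-hand iteration matrix
\[
A = \begin{pmatrix} I & -hI & 0 \\ 0 & K_2^T L K_2 & -I \\ 0 & 0 & I \end{pmatrix}
\]
is regular, so it suffices to show that $Az=0$ forces $z=0$ for $z=(\tilde{q},\tilde{v},\tilde{p})$. First I would read off the three block rows of $Az=0$, namely (i) $\tilde{q}-h\tilde{v}=0$, (ii) $K_2^T L K_2\tilde{v}-\tilde{p}=0$, and (iii) $\tilde{p}=0$. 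Working from the bottom row upward, (iii) gives $\tilde{p}=0$ immediately; substituting into (ii) yields $K_2^T L K_2\tilde{v}=0$, whence the hypothesis that $K_2^T L K_2$ is regular forces $\tilde{v}=0$; and finally (i) gives $\tilde{q}=h\tilde{v}=0$. Thus $z=0$ is the only solution of $Az=0$, so $A$ is invertible and the scheme is uniquely solvable.

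Alternatively, and even more directly, I would note that $A$ is block upper triangular with diagonal blocks $I$, $K_2^T L K_2$, and $I$, so that $\det A = \det(K_2^T L K_2)$; invertibility of $A$ is therefore equivalent to regularity of $K_2^T L K_2$. Either route settles the claim in a couple of lines.

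There is no genuine obstacle here; the only thing worth emphasizing is \emph{why} the solvability condition is weaker than in the forward-Euler case. In Proposition~\ref{prop:forw_solve} the resistive coupling $hK_2^T\text{diag}(R)K_2$ sits in the bottom-left block of the iteration matrix and mixes into the nullspace computation, producing the combined matrix $K_2^T(L+h\,\text{diag}(R))K_2$. In the backward-Euler scheme \eqref{eq:it_bd}, by contrast, the capacitance and damping terms $-hK_2^TCK_2$ and $-hK_2^T\text{diag}(R)K_2$ have been moved to the right-hand side and do not appear in $A$ at all, so they cannot affect unique solvability. The condition therefore collapses cleanly to the regularity of $K_2^T L K_2$, which by Proposition~\ref{prop:degeneracy} is exactly the non-degeneracy condition for the reduced Lagrangian system.
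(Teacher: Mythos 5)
Your proof is correct and follows essentially the same route as the paper's: both reduce unique solvability of \eqref{eq:it_bd} to the iteration matrix $A$ having trivial nullspace, read off the same three block equations $\tilde{q}=h\tilde{v}$, $\tilde{p}=K_2^TLK_2\tilde{v}$, $\tilde{p}=0$, and conclude $z=0$ from the regularity of $K_2^TLK_2$. Your block-triangular determinant remark and the observation that the $C$ and $R$ terms sit on the right-hand side of \eqref{eq:it_bd} (explaining why the condition is weaker than in Proposition~\ref{prop:forw_solve}) are accurate but are restatements and commentary on the same argument.
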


\begin{proof}
System \eqref{eq:it_bd} is uniquely solvable if the iteration matrix $A =\left(\begin{matrix}
I & -hI & 0 \\[2pt] 0 & K_2^T L K_2 & -I\\[2pt] 0 & 0 & I
\end{matrix}\right) $ has zero nullspace. For $Az = 0$ with $z=(\tilde{q},\tilde{v},\tilde{p})$, it holds (i) $\tilde{q}=h\tilde{v}$, (ii) $\tilde{p} = K_2^TLK_2 \tilde{v}$, (iii) $\tilde{p} = 0$. Thus $z=0$ is the unique solution of $Az=0$ iff $K_2^TLK_2$ has zero nullspace.
\end{proof}
Proposition~\ref{prop:back_solve} says that whenever the KCL cancels the degeneracy of the system, the backward Euler scheme is applicable, whereas the forward Euler scheme is applicable to a wider class of circuit systems (cf.~Proposition~\ref{prop:forw_solve}) for $h$ sufficiently large. The resulting variational Euler schemes \eqref{eq:backwardLag} and \eqref{eq:forwardLag} consisting of a combination of implicit and explicit updates are first order variational integrators. The construction of higher order implicit schemes (e.g.~variational partitioned Runge-Kutta (VPRK) methods along the lines of \cite{Bou07}) allows the simulation of arbitrary circuits. As an example, we present in the following a variational integrator based on the implicit midpoint rule. 

\subsection{Implicit Midpoint Rule}\label{sec:midpoint}

We introduce internal stages $\tilde{Q}_k,\tilde{P}_k,\tilde{V}_k,\, k=1,\ldots,N-1$ that are given on a second time grid $\Delta \tau = \{ \tau_k = (k+\frac{1}{2})h\, | \,k=0,\ldots,N-1 \}$ and define the internal stage vectors $\tilde{Q}_d: \{\tau_k\}_{k=0}^{N-1} \rightarrow M$, $\tilde{V}_d: \{\tau_k\}_{k=0}^{N-1} \rightarrow T_{\tilde{q}}M$ and $\tilde{P}_d: \{\tau_k\}_{k=0}^{N-1} \rightarrow T^*_{\tilde{q}}M$ to be $\tilde{V}_k= \tilde{v}(t_k+\frac{1}{2} h), \tilde{Q}_k = \tilde{q}_k + \frac{1}{2}h \tilde{V}_k, \tilde{P}_k = \frac{\partial \mathcal{L}^M}{\partial \tilde v} (\tilde{Q}_k,\tilde{V}_k)$. The approximations at the nodes are then determined by the internal stages via $\tilde{q}_{k+1} = \tilde{q}_k + h \tilde{V}_k$ and $\tilde p_{k+1}= \tilde p_k + h \frac{\partial \mathcal{L}^M}{\partial \tilde{q}}(\tilde{Q}_k,\tilde{V}_k)$.

Taking variations $\delta \tilde{q}_k, \delta\tilde{Q}_k,\delta \tilde{p}_k, \delta\tilde{P}_k,\delta \tilde{
V}_k$ for the following discrete Lagrange-d'Alembert-Pontryagin principle with $\delta q_N=0$ but free $\delta \tilde{q}_0$ and initial value $\tilde{q}^0$
\begin{equation}\label{eq:lapp_discrete_mp}
\begin{array}{c}
{\displaystyle \delta \left\{  h \sum\limits_{k=0}^{N-1} \left( \mathcal{L}^M( \tilde{Q}_{k}, \tilde{V}_{k}) + \left\langle \tilde{P}_k, \frac{\tilde{Q}_{k}-\tilde{q}_{k}}{h}-\frac{1}{2} \tilde{V}_k \right\rangle   + \left\langle \tilde{p}_{k+1}, \frac{\tilde{q}_{k+1}-\tilde{q}_{k}}{h}- \tilde{V}_k \right\rangle \right) + \langle \tilde p_0, \tilde{q}_0-\tilde{q}^0 \rangle \right\}} \\
{\displaystyle + h\sum\limits_{k=0}^{N-1} f_L^M(\tilde{Q}_k,\tilde{V}_k,\tau_k) \delta \tilde{Q}_k=0}
\end{array}
\end{equation}
gives
\begin{equation}
\begin{array}{c}
 \displaystyle{ \sum\limits_{k=0}^{N-1}\left[\left\langle  \frac{\partial \mathcal{L}^M}{\partial \tilde{q}} (\tilde{Q}_k,\tilde{V}_k) + \frac{\tilde{P}_k}{h} +  f_L^M(\tilde{Q}_k,\tilde{V}_k,\tau_k), \delta \tilde{Q}_k  \right\rangle + \left\langle \frac{\partial \mathcal{L}^M}{\partial \tilde{v}} (\tilde{Q}_k,\tilde{V}_k) - \frac{1}{2}\tilde{P}_k - \tilde p_{k+1}, \delta\tilde{V}_k \right\rangle \right.}\\
\left.+ \left\langle  \delta \tilde{P}_k, {\displaystyle \frac{\tilde{Q}_{k}-\tilde{q}_k}{h} - \frac{1}{2} \tilde{V}_k } \right\rangle + \left\langle \delta\tilde{p}_{k+1}, {\displaystyle \frac{\tilde{q}_{k+1}-\tilde{q}_k}{h} -  \tilde{V}_k }\right\rangle + \left\langle {\displaystyle \frac{-\tilde{P}_k - \tilde{p}_{k+1}+\tilde{p}_k}{h}  } ,\delta\tilde{q}_k  \right\rangle\right] + \left\langle \delta\tilde{p}_0 , \tilde q_0-\tilde q^0 \right\rangle = 0.
\end{array}
\end{equation}
The Euler-Lagrange equations are
\begin{subequations}
\begin{align}
\frac{\partial \mathcal{L}^M}{\partial \tilde{q}} (\tilde{Q}_k,\tilde{V}_k) + \frac{\tilde{P}_k}{h} +  f_L^M(\tilde{Q}_k,\tilde{V}_k,\tau_k)& = 0\\
\frac{\partial \mathcal{L}^M}{\partial \tilde{v}} (\tilde{Q}_k,\tilde{V}_k) - \frac{1}{2}\tilde{P}_k - p_{k+1}& =0\\
\frac{\tilde{Q}_{k}-\tilde{q}_k}{h} - \frac{1}{2} \tilde{V}_k & =0 \label{eq:DEL_mp_3}\\
\frac{\tilde{q}_{k+1}-\tilde{q}_k}{h} -  \tilde{V}_k & =0\label{eq:DEL_mp_4}\\
-\tilde{P}_k - \tilde{p}_{k+1}+\tilde{p}_k & =0,\quad k=0,\ldots,N-1\label{eq:DEL_mp_5}\\
\tilde q_0-\tilde q^0 & =0.
\end{align}
\end{subequations}
Eliminating $\tilde{P}_k$ by equation \eqref{eq:DEL_mp_5} together with $\tilde{V}_k = \tilde{v}_{k+\frac{1}{2}}$, $\tilde{Q}_k = \frac{\tilde{q}_k+\tilde{q}_{k+1}}{2}$ (which follows from \eqref{eq:DEL_mp_3} and \eqref{eq:DEL_mp_4}) and $\tau_k= \frac{t_k+t_{k+1}}{2}= t_{k+\frac{1}{2}}$ leads to the iteration scheme
\begin{subequations}\label{eq:DEL_mid}
\begin{align}
\tilde{p}_{k+1}& = \tilde{p}_k + h  \frac{\partial \mathcal{L}^M}{\partial \tilde{q}} \left(\frac{\tilde{q}_k+\tilde{q}_{k+1}}{2},\tilde{v}_{k+\frac{1}{2}}\right) +  h f_L^M\left(\frac{\tilde{q}_k+\tilde{q}_{k+1}}{2},\tilde{v}_{k+\frac{1}{2}}, t_{k+\frac{1}{2}}\right)\\
\tilde{q}_{k+1}& = \tilde{q}_k + h \tilde{v}_{k+\frac{1}{2}}\\
\frac{\tilde{p}_k + \tilde{p}_{k+1} }{2}& = \frac{\partial \mathcal{L}^M}{\partial \tilde{v}} \left(\frac{\tilde{q}_k+\tilde{q}_{k+1}}{2},\tilde{v}_{k+\frac{1}{2}}\right),\quad k=0,\ldots,N-1.
\end{align}
\end{subequations}
\begin{remark}
The integrator \eqref{eq:DEL_mid} is equivalent to a Runge-Kutta scheme with coefficients $a=\frac{1}{2}, b = 1, c=\frac{1}{2}$ (implicit midpoint rule integrator) applied to the corresponding Hamiltonian system.
\end{remark}
For the circuit case with Lagrangian \eqref{eq:redL} and forces \eqref{eq:consforce}, we start with given $(\tilde{q}_0,\tilde{p}_0)$ to solve iteratively for\linebreak $(\tilde{q}_{k+1},\tilde{v}_{k+\frac{1}{2}},\tilde{p}_{k+1}),\, k=0,\ldots,N-1$ for given $u_s(t)$ using the scheme
\begin{equation}\label{eq:it_mp}
\left(\begin{matrix}
I & -hI & 0 \\[2pt] 0 & K_2^T L K_2 & -\frac{1}{2}I\\[2pt] \frac{1}{2}h K_2^TCK_2 & h K_2^T \text{diag}(R)K_2 & I
\end{matrix}\right) \left(\begin{matrix} \tilde{q}_{k+1}\\[2pt]\tilde{v}_{k+\frac{1}{2}}\\[2pt]\tilde{p}_{k+1}\end{matrix}\right) = \left(\begin{matrix}
I & 0 & 0 \\[2pt] 0 & 0 & \frac{1}{2} I\\[2pt] -\frac{1}{2}h K_2^TCK_2 & 0 & I \end{matrix}\right) \left(\begin{matrix} \tilde{q}_{k}\\[2pt]\tilde{v}_{k-\frac{1}{2}}\\[2pt]\tilde{p}_{k}\end{matrix}\right) + \left(\begin{matrix} 0\\[2pt]0\\[2pt]hK_2^T\end{matrix}\right) u_s\left( t_{k+\frac{1}{2}}\right)
\end{equation}
for $k=0,\ldots,N-1$.
\begin{remark}
The discrete current $\tilde{v}_{k+\frac{1}{2}}$, playing the role of the algebraic variable in the continuous setting,  is only approximated between two discrete time nodes $t_k$ and $t_{k+1}$. Also, note that $\tilde{v}_{k-\frac{1}{2}}$ is not explicitly used for the computation of $(\tilde{q}_{k+1},\tilde{v}_{k+\frac{1}{2}},\tilde{p}_{k+1})$ (which corresponds to a zero column in the matrix of the right hand side of \eqref{eq:it_mp}). This means, that the computation of the magnitudes at time point $t_{k+1}$ depends only on the discrete magnitudes within the time interval $[t_k,t_{k+1}]$, which is characteristic for a one-step scheme. In particular, $\tilde{v}_{-\frac{1}{2}} (k=0)$ is a pseudo-variable that is not used.
\end{remark}
\begin{proposition}\label{prop:mid_solve}
System \eqref{eq:it_mp} is uniquely solvable if the matrix $K_2^T (2L + h\text{\emph{diag}}(R) + \frac{1}{2}h^2C ) K_2$ is regular.
\end{proposition}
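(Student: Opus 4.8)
The plan is to follow exactly the strategy used for Propositions~\ref{prop:forw_solve} and~\ref{prop:back_solve}: the midpoint update~\eqref{eq:it_mp} is a linear system $A z = b$ in the unknown $z = (\tilde q_{k+1}, \tilde v_{k+\frac12}, \tilde p_{k+1})$, with the right-hand side $b$ assembled from the previous step and the source term $u_s(t_{k+\frac12})$. Such a system is uniquely solvable for every $b$ precisely when the iteration matrix
\[
A = \begin{pmatrix} I & -hI & 0 \\ 0 & K_2^T L K_2 & -\tfrac12 I \\ \tfrac12 h K_2^T C K_2 & h K_2^T \text{diag}(R) K_2 & I \end{pmatrix}
\]
has trivial nullspace. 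So the only thing to verify is $\ker A = \{0\}$ under the stated regularity hypothesis.

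First I would write out $A z = 0$ block by block for $z = (\tilde q, \tilde v, \tilde p)$, obtaining (i) $\tilde q = h\tilde v$ from the first block row, (ii) $\tilde p = 2\,K_2^T L K_2\,\tilde v$ from the second (here the factor $2$ stems from the coefficient $-\tfrac12 I$, which is exactly what distinguishes this case from the backward-Euler matrix of Proposition~\ref{prop:back_solve}), and (iii) $\tfrac12 h\,K_2^T C K_2\,\tilde q + h\,K_2^T \text{diag}(R) K_2\,\tilde v + \tilde p = 0$ from the third. I would then back-substitute (i) and (ii) into (iii); the charge term becomes $\tfrac12 h\,K_2^T C K_2 (h\tilde v) = \tfrac12 h^2 K_2^T C K_2\,\tilde v$, and the whole equation collapses to the single condition
\[
K_2^T \bigl( 2L + h\,\text{diag}(R) + \tfrac12 h^2 C \bigr) K_2\,\tilde v = 0.
\]

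If $K_2^T(2L + h\,\text{diag}(R) + \tfrac12 h^2 C)K_2$ is regular, this forces $\tilde v = 0$, whence (i) and (ii) give $\tilde q = 0$ and $\tilde p = 0$; thus $z = 0$ is the only element of $\ker A$, $A$ is invertible, and the scheme is uniquely solvable.

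The argument involves no analytic difficulty — it is the same elementary elimination as in the two preceding propositions. The one point demanding care, and the only place this proof departs from those, is the correct bookkeeping of the numerical coefficients through the substitution: the midpoint discretization produces the factor $2$ in front of $L$ (from the $\tfrac12$ in the Legendre-transform block row), the factor $\tfrac12 h^2$ in front of $C$ (from combining the $\tfrac12 h$ in the third block row with $\tilde q = h\tilde v$), while $\text{diag}(R)$ retains the coefficient $h$. Tracking these three coefficients correctly is precisely what reproduces the matrix $K_2^T(2L + h\,\text{diag}(R) + \tfrac12 h^2 C)K_2$ asserted in the statement.
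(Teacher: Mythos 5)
Your proof is correct and follows exactly the paper's own argument: reduce unique solvability to the iteration matrix $A$ having trivial nullspace, extract the three block equations (i)--(iii) from $Az=0$, and back-substitute to obtain $K_2^T\bigl(2L + h\,\mathrm{diag}(R) + \tfrac{1}{2}h^2 C\bigr)K_2\,\tilde v = 0$, whence regularity forces $z=0$. The coefficient bookkeeping (the factor $2$ from the $-\tfrac{1}{2}I$ block and the $\tfrac{1}{2}h^2$ from combining $\tfrac{1}{2}h$ with $\tilde q = h\tilde v$) matches the paper exactly.
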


\begin{proof}
System \eqref{eq:it_mp} is uniquely solvable if the iteration matrix $A = \left(\begin{matrix}
I & -hI & 0 \\[2pt] 0 & K_2^T L K_2 & -\frac{1}{2}I\\[2pt] \frac{1}{2}h K_2^TCK_2 & h K_2^T \text{diag}(R)K_2 & I
\end{matrix}\right)$ has zero nullspace. For $Az = 0$ with $z=(\tilde{q},\tilde{v},\tilde{p})$, it holds (i) $\tilde{q}=h\tilde{v}$, (ii) $\tilde{p} = 2 K_2^TLK_2 \tilde{v}$, (iii) $\frac{1}{2}h K_2^TCK_2 \tilde{q} +  h K_2^T \text{diag}(R)K_2 \tilde{v} + \tilde{p} = 0$. Substituting (i) and (ii) in (iii) gives $K_2^T(2 L+ h \text{diag}(R) + \frac{1}{2}h^2 C)K_2 \tilde{v}  = 0$
Thus, $z=0$ is the unique solution of $Az=0$ if $K_2^T(2 L+ h \text{diag}(R) + \frac{1}{2}h^2 C)K_2$ has zero nullspace.
\end{proof}
Note that for linear circuits, the condition given in Proposition~\ref{prop:mid_solve} is fulfilled for $h$ sufficiently large, if the continuous system \eqref{EL_Kcirc} has a unique solution.

\begin{remark}[Condition numbers]
From Proposition~\ref{prop:degeneracy}, we see that the continuous reduced system \eqref{eq:ELred_general} may be still degenerate due to the intrinsic degeneracy of the circuit topology and configuration. On the discrete side, both the forward Euler and the midpoint integrator show some regularization property: by perturbing $K_2^TLK_2$ in magnitude proportional to the time step size $h$, both integrators can render the degenerate continuous reduced system \eqref{eq:ELred_general} into regular discrete systems \eqref{eq:it_fd} and \eqref{eq:it_mp}, respectively. 
However, this regularization comes at the price of possible large condition numbers as explained in the following.
The iteration matrices $A$ of the different schemes can be written as $A=A_0 + hE$ with $A_0 = \left( \begin{array}{ccc}  I &0& 0 \\ 0& K_2^TLK_2& -I \\ 0& 0& I \end{array}\right)$ and $E$ given by the respective iteration scheme. If the reduced system is regular (i.e.~$K_2^TLK_2$ is non singular),  $A_0$ is non singular with positive constant condition number $\kappa(A_0)$ and $\kappa(A)$ approaches a positive constant when the step size $h$ goes to zero (e.g.~one can compute the singular values of the perturbed matrix $A_0+hE$ using arguments from perturbation theory). In this case all iteration schemes are well conditioned independent of the step size $h$. However, if the reduced system is degenerate, i.e.~$K_2^TLK_2$ is singular, also $A_0$ is singular and the condition numbers of the forward Euler and the midpoint scheme grow reciprocally to the time step size $h$, i.e.~$\kappa(A)\approx \mathcal{O}(1/h)$ for small $h$. 
When the circuit topology is fixed, the circuit's physical parameters are constants and the time step $h$ is also fixed, preconditioner can be precomputed and applied to the systems \eqref{eq:it_fd} and \eqref{eq:it_mp} to improve their numerical stabilities. Since this work mainly focus on the theoretical aspects of variational integrators, we leave this stability issues for future work and assume thereafter no such issues in the subsequent discussion. 
\end{remark}

\section{Structure-preserving properties}\label{sec:structure}

In this section, we summarize the main structure-preserving properties 
of variational integrators (see e.g.~\cite{MaWe01}) and their interpretation for the case of electric circuits.

\subsection{Symplecticity and preservation of momentum maps induced by symmetries}

\paragraph{Symplecticity}
The flow on $T^*Q$ of the Euler-Lagrange equations preserves the canonical symplectic form of the Hamiltonian system. Variational integrators are symplectic, i.e.~the same property holds for the discrete flow of the discrete Euler-Lagrange equations; the canonical symplectic form is exactly preserved for the discrete solution. 
Using techniques from backward error analysis (see e.g.~\cite{HaLuWa}), it can be shown that symplectic integrators also have good energy properties, i.e.~for long-time integrations, there is no artificial energy growth or decay due to numerical errors. This can also be observed for our circuit examples in Section \ref{sex:example}. In the case of linear LC circuits that involve a quadratic potential, a second order variational integrator, e.g.~the midpoint variational integrator as derived in Section \ref{sec:midpoint}, even exactly preserves the energy (magnetic plus electric energy).

\paragraph{Preservation of momentum maps induced by symmetries}

Noether's theorem states that momentum maps that are induced by symmetries in the system are preserved.
More precisely, let $G$ be a Lie group acting on $Q$ by $\Phi:G\times Q\rightarrow Q$. We write $\Phi_g:= \Phi(g,\cdot)$. The tangent lift of this action $\Phi^{TQ}: G\times TQ\rightarrow TQ$ is given by $\Phi_g^{TQ}(v_q) = T(\Phi_g)\cdot v_q$ with $v_q\in TQ$.
The action is associated with a corresponding momentum map $J:TQ \rightarrow \mathfrak{g}^*$, where $\mathfrak{g}^*$ is the dual of the Lie algebra $\mathfrak{g}$ of $G$. The momentum map is defined by
\[
\langle J(q,v),\xi \rangle = \left\langle \frac{\partial \mathcal{L}}{\partial v}, \xi_Q(q)\right\rangle = \langle p, \xi_Q(q)\rangle 
\quad \forall \xi\in \mathfrak{g},\]
where $\xi_Q$ is the infinitesimal generator of the action on $Q$, i.e.~$\xi_Q(q) :=\left. \frac{d}{dt} \right|_{t=0} \Phi(\exp{(t\xi)} , q)$ and $\exp: \mathfrak{g} \rightarrow G$ is the exponential function.
Having a holonomic system described by a Lagrangian $\mathcal{L}$ and a holonomic constraint $h(q)=0$, then this system has a symmetry if the Lagrangian and the constraint are both invariant under the (lift of the) group action, i.e.~$\mathcal{L}\circ\Phi_g^{TQ} = \mathcal{L}$ and $h\circ \Phi_g(q) =0$ for all $g\in G$. Noether's theorem states, that if the system has a symmetry, the corresponding momentum map is preserved.
In presence of external forces, this statement is still true, if the force is orthogonal to the group action.
The discrete version of Noether's theorem \cite{MaWe01} states, that if the discrete Lagrangian has a symmetry, the corresponding momentum map is still preserved. The variational integrator based on this discrete Lagrangian is thus exactly momentum-preserving. For constrained and forced systems, the preservation still holds with the additional invariance and orthogonality conditions on constraints and forces analogous to the continuous case. 

Considering an electric circuit, we are faced with a constrained distribution given by the KCL and external forces due to resistors and voltage sources. The KCL are formulated on the tangent space; however, since these are linear, they are integrable resulting in KCL on the configuration space. Thus, in the following, we are able to apply the theory of holonomic systems to derive a preserved quantity for an eletrical circuit under some topology assumptions of the underlying graph.

\begin{proposition}[Invariance of Lagrangian]\label{prop:Linv}
The Lagrangian \eqref{Lagrangian} of the unreduced system is invariant under the translation of $q_L$.
\end{proposition}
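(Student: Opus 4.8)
The plan is to produce an explicit symmetry group acting by translation on the inductor charges and to verify directly that \eqref{Lagrangian} is fixed by it. I would take the abelian group $G=(\mathbb{R}^{n_L},+)$ acting on the charge space $Q$ by $\Phi_c(q)=q+(c,0,0,0)$, where $q=(q_L,q_C,q_R,q_V)$ is the partition of the branch charges by element type introduced in Section~\ref{sec:elecirc} and $c\in\mathbb{R}^{n_L}$ shifts only the inductor block $q_L$. The goal is to show $\mathcal{L}\circ\Phi_c=\mathcal{L}$ for every $c$, where $\mathcal{L}$ acts on $TQ$ and the current $v$ is left unchanged.

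First I would dispose of the kinetic term: $\tfrac12 v^TLv$ depends on $v$ alone and not on $q$, so it is unaffected by any translation of the configuration and in particular by the shift of $q_L$. The entire content of the statement therefore lies in the potential $\tfrac12 q^TCq$. Since $C=\mathrm{diag}(1/C_1,\ldots,1/C_n)$ is diagonal it is block-diagonal in the branch partition, $C=\mathrm{diag}(C_L,C_C,C_R,C_V)$, and the potential decouples as $\tfrac12\big(q_L^TC_Lq_L+q_C^TC_Cq_C+q_R^TC_Rq_R+q_V^TC_Vq_V\big)$. The key observation is that in the one-element-per-branch setting of Section~\ref{sec:elecirc} an inductor branch carries no capacitor, so the corresponding diagonal entries $1/C_i$ vanish; hence $C_L=0$ and the potential carries no dependence on $q_L$. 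Substituting $q_L\mapsto q_L+c$ then leaves every surviving term untouched, yielding $\mathcal{L}(\Phi_c(q),v)=\mathcal{L}(q,v)$.

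I do not expect a genuine obstacle here: the invariance is simply the reflection of the fact that $q_L$ is an artificial variable for an inductor and enters no energy term, combined with the diagonal structure of $C$. The only care required is the bookkeeping of the branch partition and the convention $C_L=0$ fixed earlier. To make the statement immediately usable for the Noether argument that follows, I would also record the infinitesimal generator $\xi_Q(q)=(\xi,0,0,0)$ for $\xi\in\mathbb{R}^{n_L}$, since it is this action whose associated momentum map will furnish the conserved quantity.
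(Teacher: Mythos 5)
Your proof is correct and follows essentially the same route as the paper's: both translate only the inductor block $q_L$ under the action of $G=\mathbb{R}^{n_L}$, observe that the kinetic term depends only on $v$, and use the convention that the diagonal entries of $C$ vanish on inductor branches (i.e.\ $C_L=0$) to conclude the potential is unchanged. Recording the infinitesimal generator $\xi_Q(q)=(\xi,0,0,0)$ is a harmless addition that the paper defers to its orthogonality proposition.
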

\begin{proof}
Consider the group $G=\mathbb{R}^{n_L}$ with group element $g \in G$. 
Let  $\Phi:G\times Q \rightarrow Q$ be the action of $G$ defined as $\Phi(g,q)= (q_L+g,q_C)$ for each $g\in G$ with tangent lift $\Phi^{TQ}:G\times TQ\rightarrow TQ, \Phi^{TQ}(g,(q,v))= (q_L+g,q_C,q_R,q_V,v_L,v_C,v_R,v_V)$. 
Then, it holds
\begin{align*}
\mathcal{L} \circ \Phi^{TQ}_g(q,v)& = \frac{1}{2} \left( \begin{array}{c} v_L \\ v_C  \\ v_R \\v_V \end{array} \right)^T L \left( \begin{array}{c} v_L \\ v_C \\ v_R \\v_V \end{array} \right) - \frac{1}{2}\left( \begin{array}{c} q_L+g \\ q_C  \\ q_R \\q_V\end{array} \right)^T C \left( \begin{array}{c} q_L+g_L \\ q_C \\ q_R \\q_V \end{array} \right) \\
&= \frac{1}{2} v^T L v - \frac{1}{2} q^T C q = \mathcal{L}(q,v),
\end{align*}
since $C= \text{diag}\left(\frac{1}{C_1},\ldots,\frac{1}{C_n}\right)$ with the first $n_L$ diagonal elements being zero.
\end{proof}

\begin{assumption}[Topology assumption]\label{ass:top}
For every node $j,\,j=1,\ldots,m$ in the circuit (except ground), the same amount of inductor branches connect inward and outward to node $j$.
\end{assumption}
In particular, Assumption \ref{ass:top} implies, that the sum of each row of $K_L^T$ is zero, i.e.~$\sum_{j=1}^{n_L} (K_L^T)_{ij} =0$ for $i=1,\ldots,m$. 
\begin{proposition}[Invariance of distribution]\label{prop:KCL}
Under Assumption \ref{ass:top}, the KCL on configuration level are invariant under equal translation of $q_L$.
\end{proposition}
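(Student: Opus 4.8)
The plan is to reduce the statement to the single linear-algebra identity $K_L^T \mathbf{1}=0$ and to observe that this is precisely the content of Assumption~\ref{ass:top}. First I would recall, from the integrability discussion preceding \eqref{eq:redL}, that the KCL on configuration level is the holonomic constraint $h(q):=K^T q=0$ cutting out the submanifold $C=\{q\in Q\mid K^T q=0\}$. Writing $K$ in its block form, I would split
\[
h(q)=K_L^T q_L + K_C^T q_C + K_R^T q_R + K_V^T q_V .
\]
The equal translation of $q_L$ is the action $\Phi_c(q)=(q_L+c\,\mathbf{1},\,q_C,\,q_R,\,q_V)$ with $c\in\mathbb{R}$ and $\mathbf{1}=(1,\ldots,1)^T\in\mathbb{R}^{n_L}$ the all-ones vector (this is the restriction of the group $\mathbb{R}^{n_L}$ of Proposition~\ref{prop:Linv} to its diagonal one-parameter subgroup).

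Next I would evaluate the constraint along the translated configuration, which is the only computation needed:
\[
h(\Phi_c(q)) = K_L^T(q_L+c\,\mathbf{1}) + K_C^T q_C + K_R^T q_R + K_V^T q_V = K^T q + c\,K_L^T\mathbf{1}.
\]
The invariance claim $h\circ\Phi_c=h$ (equivalently $\Phi_c(C)\subseteq C$, so that a configuration satisfying KCL continues to satisfy it after translation) therefore holds if and only if the correction term vanishes, i.e.\ $K_L^T\mathbf{1}=0$. Since the $i$-th entry of $K_L^T\mathbf{1}$ is exactly the $i$-th row sum $\sum_{j=1}^{n_L}(K_L^T)_{ij}$, and Assumption~\ref{ass:top} guarantees that this sum is zero for every node $i=1,\ldots,m$, I conclude $h(\Phi_c(q))=K^T q=h(q)$, which closes the argument.

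The main conceptual point — really the only non-mechanical step — is to read off correctly that \emph{equal} translation means displacement along the all-ones direction $c\,\mathbf{1}$ rather than an arbitrary $g\in\mathbb{R}^{n_L}$, and to recognize that Assumption~\ref{ass:top} is exactly the hypothesis making $K_L^T\mathbf{1}$ vanish. For a general translation by $g$ one would instead need $g$ in the kernel of $K_L^T$, which is why the symmetry group preserving \emph{both} the Lagrangian and the KCL collapses from the full $\mathbb{R}^{n_L}$ of Proposition~\ref{prop:Linv} to the one-parameter subgroup of equal translations. Everything else is the one-line block computation displayed above.
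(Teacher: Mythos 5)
Your proof is correct and follows essentially the same route as the paper's: both identify the equal translation as displacement by $a\,\mathbf{1}$ (your $c\,\mathbf{1}$), apply $K^T$ to the translated configuration to obtain $K^Tq + c\,K_L^T\mathbf{1}$, and invoke Assumption~\ref{ass:top} in the form of vanishing row sums of $K_L^T$ to kill the correction term. Your additional remarks — that general translations would require $g\in\ker(K_L^T)$, and that this is why the symmetry group shrinks to the diagonal subgroup — are accurate observations beyond what the paper states, but the core argument is the same one-line block computation.
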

\begin{proof}
The group element $g\in G$ describing an equal translation of all components of $q_L$ can be expressed as $g=a \mathbf{1}$ with $a\in \mathbb{R}$ and $\mathbf{1}$ being a vector in $\mathbb{R}^{n_L}$ with each component $1$. It follows
\begin{align*}
K^T\circ \Phi_g(q) &= K^T \left( \begin{array}{c} q_L+g \\ q_C \\q_R \\q_V \end{array} \right) = K^T q + K_L^T g = K^T q + K_L^T \mathbf{1} a = K^T q,
\end{align*}
since the sum of each row of $K_L^T$ is zero.
\end{proof}
 
 \begin{proposition}[Orthogonality of external force]\label{prop:forth}
The external force $f_L$ \eqref{eq_fl} is orthogonal to the action of the group $G=\mathbb{R}^{n_L}$ being translations of $q_L$.
 \end{proposition}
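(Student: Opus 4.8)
The plan is to reduce the statement to a short computation with the infinitesimal generator of the translation action. Recall that, in the forced Noether setting summarized above, a force is said to be \emph{orthogonal to the group action} precisely when its pairing with every infinitesimal generator vanishes; thus what must be shown is
\[
\langle f_L(q,v,t),\xi_Q(q)\rangle = 0 \qquad \text{for all } \xi\in\mathfrak{g}=\mathbb{R}^{n_L}.
\]
So the first step is to identify $\xi_Q$ for the action $\Phi(g,q)=(q_L+g,q_C,q_R,q_V)$ used in Proposition~\ref{prop:Linv}.

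First I would compute the generator. Since $\Phi(\exp(t\xi),q)=(q_L+t\xi,\,q_C,\,q_R,\,q_V)$, differentiating at $t=0$ gives, in the $(L,C,R,V)$ block decomposition of $T_qQ\cong\mathbb{R}^n$, the vector $\xi_Q(q)=(\xi,0,0,0)$; that is, the generator is supported entirely on the inductor branches. Consequently the pairing with $f_L$ only sees the inductor ($L$-)block of the force, and I would write $\langle f_L,\xi_Q\rangle = \langle (f_L)_L,\xi\rangle$, where $(f_L)_L$ denotes the sub-vector of $f_L=-\text{diag}(R)v+u_s$ indexed by inductor branches.

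The crux is then to show $(f_L)_L=0$. Under the one-circuit-element-per-branch convention used throughout (the same one underlying the block decomposition of $K$ into $K_L,K_C,K_R,K_V$), an inductor branch carries neither a resistor nor a voltage source, so $R_i=0$ and $\epsilon_i=0$ for every inductor branch $i$. Hence both the damping term $-\text{diag}(R)v$ and the source term $u_s=\text{diag}(\mathcal{E})u$ have vanishing $L$-block, giving $(f_L)_L=0$ and therefore $\langle f_L,\xi_Q\rangle=0$ for every $\xi$, which is exactly the asserted orthogonality.

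I expect the only point requiring care — rather than a genuine obstacle — is making explicit that $R_i=\epsilon_i=0$ precisely on the inductor branches, i.e.\ recognizing that this is where the one-element-per-branch hypothesis enters; the remainder is a direct calculation. Indeed, were a single branch allowed to host both an inductor and a resistor (as permitted by the Remark following the geometric setting), the damping term could act along an inductor branch and the conclusion would fail, so this structural assumption is essential to the statement.
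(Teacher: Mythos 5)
Your proof is correct and takes essentially the same route as the paper's: compute the infinitesimal generator $\xi_Q(q)=(\xi,0,0,0)$ of the translation action and observe that its pairing with $f_L=-\text{diag}(R)v+\text{diag}(\mathcal{E})u$ vanishes because both $\text{diag}(R)$ and $\text{diag}(\mathcal{E})$ are zero on the inductor block. Your closing remark merely makes explicit the one-element-per-branch convention that the paper compresses into the phrase ``zero entries in the first $n_L$ lines and columns,'' which is a fair clarification but not a different argument.
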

 \begin{proof}
Let $\xi \in \mathfrak{g}=\mathbb{R}^{n_L}$ be an element of the Lie algebra. For the group action $\Phi_g(q) = (q_L+g,q_C,q_R,q_V)$, the infinitesimal generator can be calculated as
\[
\xi_Q(q) = \left. \frac{d}{dt}\right|_{t=0} \Phi_{\exp{t\xi}}(q) = \left. \frac{d}{dt}\right|_{t=0} (q_L+\exp{t\xi},q_C,q_R,q_V) = (\xi,0,0,0)
\]
It thus holds,
\[
\langle f_L,\xi_Q(q)\rangle =\langle -\text{diag}(R)v + \text{diag}(\mathcal{E}) u,\xi_Q(q)\rangle =0,
\]
since $\text{diag}(R)$ and $\text{diag}(\mathcal{E})$ have zero entries in the first $n_L$ lines and columns.
 \end{proof}
 
\begin{theorem}[Preservation of flux]\label{theo:neother}
Under Assumption~\ref{ass:top}, the sum of all inductor fluxes in the electric circuit described by the Lagrangian \eqref{Lagrangian}, the external forces \eqref{eq_fl}, and the KCL is preserved.
\end{theorem}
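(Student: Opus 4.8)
The plan is to apply the forced, holonomically-constrained version of Noether's theorem recalled above, using Propositions~\ref{prop:Linv}, \ref{prop:KCL} and \ref{prop:forth} as exactly the three hypotheses it requires. The essential observation is that the full translation group $G=\mathbb{R}^{n_L}$ is \emph{not} a symmetry of the complete constrained and forced system: while Proposition~\ref{prop:Linv} gives invariance of the Lagrangian and Proposition~\ref{prop:forth} gives orthogonality of the force for all of $G$, Proposition~\ref{prop:KCL} only guarantees invariance of the integrable KCL constraint under the one-parameter subgroup $H=\{a\mathbf{1}\mid a\in\mathbb{R}\}\subset G$ of \emph{equal} translations of $q_L$, and this is precisely where Assumption~\ref{ass:top} enters. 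Thus the symmetry that survives for the full system is exactly $H$, with Lie algebra $\mathfrak{h}$ spanned by the generator $\xi=\mathbf{1}\in\mathbb{R}^{n_L}$.

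First I would record the infinitesimal generator of the $H$-action, which is the one already computed in the proof of Proposition~\ref{prop:forth} specialized to $\xi=\mathbf{1}$, namely $\xi_Q(q)=(\mathbf{1},0,0,0)$. Evaluating the momentum map $J:TQ\to\mathfrak{g}^*$ from its definition then gives
\[
\langle J(q,v),\mathbf{1}\rangle=\langle p,\xi_Q(q)\rangle=\langle p,(\mathbf{1},0,0,0)\rangle=\mathbf{1}^T p_L=\sum_{i=1}^{n_L}(p_L)_i,
\]
so the component of the momentum map along $\xi=\mathbf{1}$ is exactly the sum of all inductor fluxes. Since the three hypotheses of the forced, constrained Noether theorem hold for $H$ (invariance of $\mathcal{L}$ by Proposition~\ref{prop:Linv}, invariance of the KCL constraint by Proposition~\ref{prop:KCL}, and orthogonality of $f_L$ to the $H$-action by Proposition~\ref{prop:forth}), this component is conserved along solutions, which is the claim.

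As a direct corroboration that does not invoke the abstract theorem, I would differentiate the candidate conserved quantity along the equations of motion \eqref{eq:EL_circuit_full}. Taking the inductor block of \eqref{eq:EL_circuit_full_p}, the terms $(Cq)_L$, $(\text{diag}(R)v)_L$ and $(u_s)_L$ all vanish because $C$, $\text{diag}(R)$ and the voltage source act trivially on the inductor branches in the block decomposition $(L,C,R,V)$, as already used in the proofs of Propositions~\ref{prop:Linv} and \ref{prop:forth}. This leaves $\dot p_L=K_L\lambda$, whence $\frac{d}{dt}\sum_{i}(p_L)_i=\mathbf{1}^T K_L\lambda$, and $\mathbf{1}^T K_L=0$ since Assumption~\ref{ass:top} forces every column sum of $K_L$ over the inductor branches to vanish, giving conservation directly.

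The main obstacle is conceptual rather than computational: one must correctly identify which symmetry genuinely descends to the constrained, forced system. It is tempting to conclude from Propositions~\ref{prop:Linv} and \ref{prop:forth} that each individual inductor flux $(p_L)_i$ is preserved, but the KCL breaks the $n_L$-dimensional symmetry of $\mathcal{L}$ and $f_L$ down to the single diagonal direction $\mathbf{1}$, so that only the \emph{sum} $\sum_i(p_L)_i$ is conserved. Tracking this reduction carefully, and recognizing that Proposition~\ref{prop:KCL} is the binding hypothesis that selects the surviving subgroup, is the crux of the argument.
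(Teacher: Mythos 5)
Your proof is correct and follows essentially the same route as the paper: Noether's theorem invoked through Propositions~\ref{prop:Linv}, \ref{prop:KCL}, and \ref{prop:forth}, with the momentum map evaluated along $\xi=\mathbf{1}$ giving $\sum_{i=1}^{n_L}(p_L)_i$, and your direct corroboration $\frac{d}{dt}\,\mathbf{1}^T p_L = \mathbf{1}^T K_L \lambda = 0$ from \eqref{eq:EL_circuit_full_p} is precisely the paper's own alternative proof given in the remark following the theorem. If anything, your observation that the KCL constraint cuts the symmetry down from all of $G=\mathbb{R}^{n_L}$ to the diagonal subgroup $H=\{a\mathbf{1}\,|\,a\in\mathbb{R}\}$ is stated more carefully than in the paper, whose proof loosely asserts invariance of the KCL under the full group action even though Proposition~\ref{prop:KCL} only establishes it for equal translations -- a distinction that matters, since it is exactly why only the sum of the inductor fluxes, and not each flux individually, is conserved.
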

\begin{proof}
From Proposition \ref{prop:Linv}, \ref{prop:KCL}, and \ref{prop:forth} we know that the Lagrangian and the KCL are invariant under the group action $\Phi_g(q) = (q_L+g,q_C,q_R,q_V)$ with $g\in G=\mathbb{R}^{n_L}$ and the external force $f_L$ is orthogonal to this group action. 
It follows with Noether's theorem, that the induced momentum map is preserved by the flow of the system.
For the momentum map, we calculate
\[
\langle J(q,v),\xi \rangle =\left\langle\frac{\partial \mathcal{L}}{\partial v}, \xi_Q(q) \right\rangle = \frac{\partial \mathcal{L}}{\partial v_i} \xi_Q^i(q) =  \frac{\partial \mathcal{L}}{\partial v_{L_i}} \xi^i.
\]
Thus, the preserved momentum map is $J(q,v) = \sum_{i=1}^{n_L} p_{n_{L_i}}$, i.e.~the sum of the fluxes of all inductors in the circuit.
\end{proof}

\begin{remark}[Proof based on Euler-Lagrange equations]
An alternative proof can be derived based on the Euler-Lagrange equations in the following way.
From \eqref{eq:EL_circuit_full_p}, it holds
\[
\dot{p}_L = K_L \lambda.
\]
For the time derivative of the sum of all inductors, it follows
\begin{align*}
\frac{d}{dt} \sum_{i=1}^{n_L} p_{i} &= \sum_{i=1}^{n_L} \dot{p}_{i} = \sum_{i=1}^{n_L} \sum_{j=1}^{m} (K_L)_{ij} \lambda_j = \sum_{j=1}^{m} \lambda_j  \sum_{i=1}^{n_L}(K_L)_{ij} =0,
\end{align*} 
since with Assumption \ref{ass:top}, it holds $0=  \sum_{i=1}^{n_L} (K_L^T)_{ji} =  \sum_{i=1}^{n_L} (K_L)_{ij}$ for $j=1,\ldots,m$. Thus, $\sum_{i=1}^{n_L} p_{i}$ is preserved.
\end{remark}

\begin{remark}[Momentum map for reduced system]
Also, for the reduced system described by the Lagrangian \eqref{eq:redL}, the same momentum map can be computed by considering the group action $\Phi_{\tilde{g}}(\tilde{q}) = \tilde{q} +\tilde{g}$ with the group element $\tilde{g}\in \tilde{G}\subset\mathbb{R}^{n-m}$ defined as $\tilde{g} = K_2^+ \left(\begin{array}{c} g \\ 0 \end{array}\right)$ with $K_2^+$ being the well-defined pseudo-inverse of $K_2$. 
\end{remark}
\begin{lemma}[Preserved momentum map]
For any linear circuit described by the Lagrangian \eqref{Lagrangian}, the external forces \eqref{eq_fl}, and the KCL, the momentum map defined
by $\eta^T \frac{\partial \mathcal{L}}{\partial v}$ with $\eta \in ker(K_L^T)$ is preserved.
\end{lemma}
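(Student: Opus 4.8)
The plan is to run the same Noether-type argument that proves Theorem~\ref{theo:neother}, but to replace the specific translation direction $\mathbf{1}$ (which there forced Assumption~\ref{ass:top}) by an arbitrary direction $\eta\in ker(K_L^T)$. Concretely, I would take the one-parameter group $G=\mathbb{R}$ acting on $Q$ by $\Phi_a(q)=(q_L+a\eta,\,q_C,\,q_R,\,q_V)$, with tangent lift leaving all currents fixed, and show that this action is a symmetry of the constrained, forced system. Since Propositions~\ref{prop:Linv}, \ref{prop:KCL}, and \ref{prop:forth} already isolate the three ingredients required by the forced, constrained version of Noether's theorem, the proof reduces to re-checking each ingredient for this new direction.

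First I would check invariance of the Lagrangian exactly as in Proposition~\ref{prop:Linv}: because the first $n_L$ diagonal entries of $C$ vanish, shifting $q_L$ by $a\eta$ leaves $\tfrac12 q^TCq$ unchanged, so $\mathcal{L}\circ\Phi_a^{TQ}=\mathcal{L}$. The key step — the one that isolates the hypothesis $\eta\in ker(K_L^T)$ — is invariance of the KCL on configuration level:
\[
K^T\Phi_a(q)=K^Tq+a\,K_L^T\eta=K^Tq,
\]
since $K_L^T\eta=0$. This is precisely where Assumption~\ref{ass:top} is no longer needed: the topology assumption of Theorem~\ref{theo:neother} was used only to guarantee $\mathbf{1}\in ker(K_L^T)$, whereas here $\eta$ is taken in that kernel from the outset. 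Finally, the infinitesimal generator is $\xi_Q(q)=(\eta,0,0,0)$, and orthogonality of the force follows as in Proposition~\ref{prop:forth}: $\langle f_L,\xi_Q(q)\rangle=\langle -\text{diag}(R)v+u_s,(\eta,0,0,0)\rangle=0$, because $\text{diag}(R)$ and $u_s$ vanish on the inductor branches.

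With these three facts, the forced, constrained Noether theorem yields a conserved momentum map, which I would then identify explicitly:
\[
\langle J(q,v),\xi\rangle=\left\langle \frac{\partial\mathcal{L}}{\partial v},\xi_Q(q)\right\rangle=\eta^T\frac{\partial\mathcal{L}}{\partial v}=\eta^Tp_L,
\]
matching the claimed quantity; note that the non-inductor components of $\partial\mathcal{L}/\partial v=Lv$ vanish, so embedding $\eta$ by zeros on the non-inductor branches changes nothing. Theorem~\ref{theo:neother} is recovered as the special case $\eta=\mathbf{1}$ under Assumption~\ref{ass:top}.

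As a cross-check I would also record the short direct argument from the Euler-Lagrange equations: restricting \eqref{eq:EL_circuit_full_p} to the inductor branches, and using that an inductor branch carries no capacitor, resistor, or source, gives $\dot p_L=K_L\lambda$; hence $\frac{d}{dt}(\eta^Tp_L)=\eta^TK_L\lambda=(K_L^T\eta)^T\lambda=0$. I do not expect a genuine obstacle here. The only real content is recognizing that $ker(K_L^T)$ is exactly the space of translation directions that preserve the KCL, so each such $\eta$ contributes one conservation law; the entire difficulty of the earlier theorem — verifying the topology hypothesis — is thereby absorbed into the hypothesis $\eta\in ker(K_L^T)$.
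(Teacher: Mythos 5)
Your proposal is correct, but its primary route differs from the paper's. The paper proves the lemma with exactly the one-line direct computation you relegate to a ``cross-check'': restricting \eqref{eq:EL_circuit_full_p} to the inductor branches gives $\dot p_L = K_L\lambda$, and hence $\frac{d}{dt}\,\eta^T\frac{\partial\mathcal{L}}{\partial v} = \eta^T K_L\lambda = 0$ because $\eta\in ker(K_L^T)\perp im(K_L)$; no symmetry argument is invoked at all. Your main argument instead reruns the Noether machinery of Propositions~\ref{prop:Linv}, \ref{prop:KCL}, and \ref{prop:forth} with the fixed direction $\mathbf{1}$ replaced by an arbitrary $\eta\in ker(K_L^T)$, and all three verifications are sound: invariance of the Lagrangian uses only the vanishing of the inductor entries of $C$; invariance of the integrated KCL is exactly the hypothesis $K_L^T\eta=0$ (equivalently, the generator $\xi_Q(q)=(\eta,0,0,0)$ lies in $\Delta_Q(q)$, so the multiplier term $K\lambda$ pairs to zero); and orthogonality of the force uses that $\text{diag}(R)$ and $u_s$ vanish on the first $n_L$ components. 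Your identification of the conserved quantity, including the remark that padding $\eta$ with zeros is harmless because the non-inductor components of $Lv$ vanish, is also correct, as is your observation that Assumption~\ref{ass:top} served in Theorem~\ref{theo:neother} only to guarantee $\mathbf{1}\in ker(K_L^T)$. As for what each approach buys: the paper's computation is shorter and needs no appeal to a forced, constrained Noether theorem, whereas your symmetry formulation explains where $ker(K_L^T)$ comes from (it is precisely the set of inductor-charge translations compatible with the KCL), exhibits Theorem~\ref{theo:neother} as the special case $\eta=\mathbf{1}$, and --- via the discrete Noether theorem --- immediately implies that the variational integrators of Section~\ref{sec:disvar} preserve the same momentum maps, a conclusion the one-line continuous computation does not give by itself.
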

\begin{proof}
Using the Euler-Lagrange equations, we see immediately
\[
\frac{d}{dt} \eta^T \frac{\partial \mathcal{L}}{\partial v}= \eta^T \dot{p} = \eta^T K_L \lambda =0,
\] 
since $\eta^T \in ker(K_L^T) \perp im(K_L) \ni K_L \lambda$ and thus, $\eta^T \frac{\partial \mathcal{L}}{\partial v}=const$.
\end{proof}

The discrete Lagrangian system, including constraints and forces introduced in Section~\ref{sec:disvar}, inherits the same symmetry and orthogonality property as the continuous system. Due to the discrete Noether theorem, the resulting variational integrators exactly preserves the sum of inductor fluxes under Assumption~\ref{ass:top} (compare Section~\ref{subsec:tl} for a numerical example).

\subsection{Frequency spectrum}\label{subsec:frequ}
As can be observed in numerical examples (see Section~\ref{sex:example}), the frequency spectrum of the discrete solutions is much better preserved using variational integrators than other integrators.

We want to analytically demonstrate this phenomenon by means of a simple harmonic $1$d oscillator. Assume the curves $(q(t),p(t))$ on $[0,T]$ describe the oscillatory behavior of the system.
Consider the discrete solution $\{(q_k,p_k)\}_{k=0}^N$ defined on the discrete time grid $\{t_k\}_{k=0}^n$ with $t_0=0$, $t_N = T$ and $h= t_{k+1}-t_k$ that is obtained from the one-step update scheme $(q_{k+1}, p_{k+1})^T = A ( q_{k}, p_{k})^T,\, k=0,\ldots,N-1$, where $A\in \mathbb{R}^{2,2}$ depends on the constant time step $h$.
We assume that the discrete solution $\{(q_k,p_k)\}_{k=0}^N$ converges to the solution $(q(t),p(t))$ for decreasing  $h$.
Since this solution is oscillating and due to the convergence of the scheme, at least one eigenvalue $\lambda_1$ of $A$ has to be complex (with nonzero imaginary part) for a small enough time step $h$. Since $A\in\mathbb{R}^{2,2}$ the second eigenvalue $\lambda_2$ has to be complex conjugate to the first one. Thus, the corresponding eigenvectors are linearly independent and
$A$ is diagonalizable as $A = QVQ^{-1}$ with $V = \text{diag}(\lambda_1,\lambda_2)$.
With the coordinate transformation $(  x_{k}, y_{k})^T = Q^{-1} (q_{k}, p_{k})^T$ it holds $(x_{k+1}, y_{k+1})^T = V ( x_{k}, y_{k})^T$, i.e.~$x_{k+1}=\lambda_1 x_k$ and $y_{k+1}= \lambda_2 y_k$.

We demonstrate the preservation of the frequency spectrum for the $1$d oscillator in two steps: (i) We show that for a convergent scheme the update matrix $A$ has two eigenvalues both of norm $1$ if and only if the update scheme is symplectic. (ii) We show that methods defined by matrices with norm $1$ eigenvalues preserve the frequency spectrum defined on different time spans. 
\begin{itemize}
\item[(i)] ``$\Leftarrow$'': Assume the scheme defined by $A$ is symplectic, then $\text{det}(A)=1$ (see e.g.~\cite{MarRat94}). It follows with $\lambda_1$ complex conjugate to $\lambda_2$ ($\lambda_2 = \lambda_1^*$): $1 =\text{det}(Q)\cdot \text{det}(V)\cdot \text{det}(Q^{-1})= \lambda_1\cdot \lambda_2 = | \lambda_1 |^2=  | \lambda_2|^2$ and thus $|\lambda_i|=1$, $i=1,2$. ``$\Rightarrow$'':
Assume $A$ has two complex conjugate eigenvalues $\lambda_1=\lambda_2^*$ with $|\lambda_1| = |\lambda_2| =1$, i.e.~we write $\lambda_1 = e^{i\theta}$ and $\lambda_2 = e^{-i\theta}$ with $\theta\in\mathbb{R}$ and $V = \text{diag}( e^{i\theta }, e^{-i\theta })$. Note that $\theta$ depends on the constant time step $h$ that is used for the discretization. Let $J=\left(\begin{array}{cc} 0&1\\-1& 0\end{array}\right)$ be the canonical symplectic form and introduce the non-canonical symplectic form $\tilde{J}=Q^TJQ$. We show that $V$ preserves $\tilde{J}$, and therefore $A$ preserves $J$, i.e.~$A$ is symplectic. Since $J$ is skew-symmetric with zero diagonal, $\tilde{J}$ is of the form $\left(\begin{array}{cc} 0 & \triangle \\ -\triangle & 0 \end{array}\right)$ with $\triangle \in\mathbb{R}$. It follows
\[ V^T \tilde{J} V = 
    \left(\begin{array}{cc}  e^{i\theta} & 0 \\ 0 & e^{-i\theta } \end{array}\right) \left(\begin{array}{cc} 0 & \triangle \\ -\triangle & 0 \end{array}\right)  \left(\begin{array}{cc}  e^{i\theta} & 0 \\ 0 & e^{-i\theta } \end{array}\right) = \left(\begin{array}{cc} 0 & e^{i\theta }e^{-i\theta } \triangle \\ -e^{-i\theta}e^{i\theta }\triangle & 0 \end{array}\right) = \left(\begin{array}{cc} 0 & \triangle \\ -\triangle & 0 \end{array}\right)= \tilde{J}.
\]

\item[(ii)]
Suppose that the discrete values $x_1,x_2,\ldots,x_N$ determined by the update scheme $A$ are known, and admit the following discrete inverse Fourier transformation

\[x_k=\frac{1}{N}\sum_{n=1}^N \tilde{x}_n \exp\left(\frac{2\pi i}{N} k n\right),\; k=1,\ldots,N.\]
Consider a sequence of discrete points $\{X_k\}_{k=1}^N$ that is shifted by one time step such that $X_k = x_{k+1}= \lambda_1 x_k,\; k=1,\ldots,N$, i.e.~$\{X_k\}_{k=1}^N$ approximates the solution on a later time interval than $\{x_k\}_{k=1}^N$.
This admits the following discrete inverse Fourier transformation

\[X_k=\frac{1}{N}\sum_{n=1}^N \lambda_{1} \tilde{x}_n \exp\left(\frac{2\pi i}{N} k n\right),\; k=1,\ldots,N,\]
i.e., $\tilde{X}_n=\lambda_{1} \tilde{x}_n$.
By the definition of the frequency spectrum, it holds $\tilde{X}_n^* \tilde{X}_n = \tilde{x}_n^* \lambda_{1}^* \lambda_{1} \tilde{x}_n =  \tilde{x}_n^* |\lambda_{1}|^2  \tilde{x}_n=\tilde{x}_n^* \tilde{x}_n$, where the last equality relies on the symplecticity. Shifting the discrete solution arbitrary times, we see, that the spectrum will be preserved using different time intervals for the frequency analysis. This means that, in particular for long-time integration, a frequency analysis on a later time interval yields the same results as on an earlier time interval, which we denote by \emph{preservation of the frequency spectrum}. 
The analysis for $y$ follows analogously, and with the linear transformation $Q$ the same holds for $q$ and $p$.
On the other hand, if $|\lambda_{i,j}|\neq 1,\,i,j=1,2$ (such as for non-symplectic or non-convergent methods), the frequency spectrum will either shrink or grow unbounded.
\end{itemize}

Although the analysis was only performed for the simple case of a $1$d harmonic oscillator (in particular statement (i) is restricted to this case), we believe that 
for higher-dimensional systems, a similar statement as in (ii) can also be shown, which is left for future
work.

\paragraph{Relation to numerical results}
For the numerical computations in Section~\ref{sex:example}, we perform a frequency analysis in two different ways:
Firstly, we calculate the frequency spectrum on different time subintervals of the overall time integration interval $[0,T]$.
This is directly connected to the analytical result of frequency preservation, i.e.~we can observe that the spectrum is independent on the specific time interval using a symplectic integrator; however, using a non-symplectic method, the spectrum 
is damped calculated on a later time interval.
Secondly, we use a fixed time interval $[0,T]$ for the frequency analysis, but use different time steps resulting in different iteration matrices $A$. As we saw for the harmonic oscillator using a symplectic method, the magnitude of the two eigenvalues is independent on the time step $h$ where it might de- or increase for increasing $h$ for a non-symplectic method (e.g.~using the explicit Euler method, the absolute value of the eigenvalues is $1+\mathcal{O}(h^2)$ and the frequency spectrum would grow for larger $h$).

\section{Noisy circuits}\label{sec:noise}

In this section, we extend the constructed variational integrator to the simulation of noisy electric circuits, for which noise is added to each edge of the circuit.
Following the description in \cite{BRO08}, in the stochastic setting, the constrained stochastic variational principle is
\begin{equation}
    \delta \int_0^T \mathcal{L}({q}(t),{v}(t)) + \left\langle {p}(t),\dot{{q}}(t) - {v}(t) \right\rangle \,dt + \int_0^T f_L({q}(t),{v}(t),t) \cdot \delta q(t) \,dt + \int_0^T \delta q(t) \cdot (\Sigma \circ dW_t)=0
\end{equation}
with constrained variations $\delta q \in \Delta_Q(q)$, where $\Sigma$ is a $n\times n$ matrix, usually constant and diagonal, indicating the amplitude of noise at each edge, $W_t$ is a $n$-dimensional Brownian motion, and the last stochastic integral is in the sense of Stratonovich.
This principle leads to the constrained stochastic differential equation
\begin{subequations}\label{eq:EL_full_noise}
\begin{align}
 \frac{\partial \mathcal{L}}{\partial q}  -\dot{p} + f_L  + \Sigma \circ \frac{dW_t}{dt} & \in \Delta^0_Q(q) \label{EL_K_1_n}\\
dq & =  {v} dt\label{EL_K_2_n}\\
 \frac{\partial \mathcal{L}}{\partial v} -{p}  & =0\label{EL_K_3_n}\\
 K^T v & = 0, \label{EL_K_4_n}
\end{align}
\end{subequations}
where by \eqref{EL_K_1_n} we mean that it holds $\displaystyle \int_0^T \left (\frac{\partial \mathcal{L}}{\partial q} dt -d{p} + f_L dt + \Sigma \circ dW_t \right) = \int_0^T \mathcal{X}(q)\, dt$ for a vector field $\mathcal{X}(q) \in  \Delta^0_Q(q)$ for any $T$.
Correspondingly, the reduced stochastic variational principle reads
\begin{equation}
\delta \int_0^T \mathcal{L}^M(\tilde{q}(t),\tilde{v}(t)) + \left\langle \tilde{p}(t),\dot{\tilde{q}}(t) - \tilde{v}(t) \right\rangle \,dt + \int_0^T f^M_L(\tilde{q}(t),\tilde{v}(t),t) \cdot \delta \tilde q(t) \,dt + \int_0^T \delta \tilde{q}(t) \cdot (K_2^T \Sigma \circ dW_t)=0.
\end{equation}
This results in the reduced stochastic Euler-Lagrange equations
\begin{subequations}\label{ELc_K_noise}
\begin{align}
 \frac{\partial \mathcal{L}^M}{\partial \tilde q} dt -d{\tilde p} + f^M_L dt + K_2^T \Sigma \circ dW_t & =0\label{ELc_K_1_noise}\\
d{\tilde{q}} & =  \tilde{v} dt\label{ELc_K_2_noise}\\
 \frac{\partial \mathcal{L}^M}{\partial \tilde v} -\tilde{p}  & =0.\label{ELc_K_3_noise}
\end{align}
\end{subequations}

To derive the discrete equations with noise, the Stratonovich integral is approximated by a discrete version. For simplicity, we present the equations for the forward Euler iteration scheme only. On the interval $[t_k,t_{k+1}]$
the integral $ \int_{t_k}^{t_{k+1}} \delta \tilde{q}(t) \cdot (K_2^T \Sigma \circ dW_t)$ is approximated by the discrete expression $ \delta \tilde{q}_k \cdot (K_2^T \Sigma ) B^k$ with $B^k \sim \mathcal{N}(0,h)$, $k=0,\ldots,N-1$ (see also \cite{BRO08}).
In this way, we obtain the following reduced stochastic discrete variational principle 

\begin{equation}
\delta \left\{  h \sum\limits_{k=0}^{N-1} \left( \mathcal{L}^M( \tilde{q}_k, \tilde{v}_k) + \left\langle \tilde{p}_k, \frac{\tilde{q}_{k+1}-\tilde{q}_k}{h}- \tilde{v}_k \right\rangle \right) \right\} + h\sum\limits_{k=0}^{N-1}  f^M_L(\tilde{q}_k,\tilde{v}_k,t_k) \delta \tilde{q}_k + \sqrt{h}\sum\limits_{k=0}^{N-1} K_2^T \Sigma \xi_k \cdot \delta \tilde{q}_k =0,
\end{equation}
where for each $k=0,\ldots,N-1$, $\xi_k$ is a $n$-dimensional vector with entries being independent standard normal random variables.
The discrete reduced stochastic Euler-Lagrange equations that give the symplectic forward Euler iteration scheme is then given by
\[
 \begin{array}{rl}
{\displaystyle \frac{\partial \mathcal{L}^M}{\partial \tilde{q}}(\tilde{q}_{k},\tilde{v}_{k}) - \frac{1}{h} (\tilde{p}_{k}-\tilde{p}_{k-1}) +  f_L^M(\tilde{q}_k,{v}_{k},t_k)} +  \frac{1}{\sqrt{h}} K_2^T \Sigma \xi_k&=0\\
{\displaystyle \frac{\tilde{q}_{k}-\tilde{q}_{k-1}}{h}}&= \tilde{v}_{k-1}\\
{\displaystyle \frac{\partial \mathcal{L}^M}{\partial v}(\tilde{q}_{k}, \tilde{v}_{k})}&=  \tilde{p}_{k},\quad k=1,\ldots,N.
\end{array} 
\]
Different symplectic variational schemes (e.g.~backward Euler or midpoint scheme) can be derived in the same way as in Section~\ref{sec:disvar} with an appropriate discretization for the Stratonovich integral.
In \cite{BRO08}, it is shown that the stochastic flow of a stochastic mechanical system
on $T^*Q$ preserves the canonical symplectic form almost surely (i.e., with probability one with respect to the noise). Furthermore, an extension of Noether's theorem says that in presence of symmetries of the Lagrangian, the corresponding momentum map is preserved almost surely.

\section{Examples}\label{sex:example}
In the following section, we demonstrate the variational integration scheme by means of simple circuit examples.
The numerical results are compared with solutions resulting from standard modeling and simulation techniques from circuit theory. 
In particular, we compare the variational integrator results based on Lagrangian models with solutions obtained with a Runge-Kutta scheme of fourth order as well with solutions obtained by applying Backward Differentiation Formula (BDF) methods to models derived using the Modified Nodal Analysis (MNA). For all methods, we use a constant step size $h$.

For all examples, we use the convention, that the charge vector $q\in \mathbb{R}^n$ is ordered as $q = (q_L, q_C, q_R, q_V)$, and correspondingly the current, voltage, and linkage flux vectors as well as the Kirchhoff Constraint and the Fundamental Loop matrix.

\subsection{Short introduction to MNA}\label{subsec:comp}

In most circuit simulators, the Modified Nodal
Analysis (MNA) is used to assemble the system of equations. In the following, we
present the standard modified nodal analysis. We follow the description in \cite{Voigtmann06}. A more
detailed description can be found, for example, in \cite{Voigtmann06, GFtM05, Baec07}.
The MNA consists of three steps: $1.$ Apply the Kirchhoff Constraint Law to every node except the ground.
$2.$ Insert the representation for the branch current of resistors, capacitors and current sources.
$3.$ Add the representation for inductors and voltage sources 
explicitly to the system.

The combination of Kirchhoff's laws and the characteristic equations of the different elements yields the system of differential and algebraic equations
\begin{subequations}\label{eq:MNA_nlin}
 \begin{eqnarray}
K^T_C  \dot{q}_C(K_C \hat{u},t) + K^T_R g(K_R \hat{u},t) + K^T_L v_L + K^T_V v_V + K^T_I v_I(K \hat{u},\dot{q}_C(K_C \hat{u},t),v_L,v_V,t) &=&0\\
\dot{p}_L(v_L,t) -K_L \hat{u} &=& 0\\
u_V(K\hat{u},\dot{q}_C(K_C \hat{u},t),v_L,v_V,t)-K_V \hat{u} &=&0
\end{eqnarray}
\end{subequations}
with node voltages $\hat{u}$, branch currents through voltage and flux controlled elements $v_V$ and $v_L$,
voltage dependent charges through capacitors $q_C$ and current dependent fluxes through inductors $p_L$,
voltage dependent conductance $g$, and controlled current and voltage sources $v_I$ and $u_V$.
System \eqref{eq:MNA_nlin} can be rewritten in compact form as
\begin{equation}\label{eq:MNA_DAE}
A [d(x(t),t)]' + b(x(t),t) =0
\end{equation}
with
\[x = \left[\begin{array}{c} \hat{u} \\ v_L \\ v_V \end{array}\right],\quad A =   \left[\begin{array}{cc} K_C^T & 0 \\ 0 & I \\ 0 & 0 \end{array}\right],\quad d(x,t) =  \left[\begin{array}{c}  {q}_C(K_C \hat{u},t) \\ {p}_L(v_L,t) \end{array}\right]\]
and the obvious definition of $b$. The prime $[d(x, t)]' = \frac{d}{dt} [d(x(t), t)]$ denotes differentiation with respect to time. Since the matrix $A \frac{\partial d(x,t)}{\partial x}$ might be singular, equation \eqref{eq:MNA_DAE} is not an ordinary differential equation but of differential algebraic type. For a detailed description of the properties of these equations, we refer to e.g.~\cite{KM06}.

The standard approach to numerically solve the system of equations \eqref{eq:MNA_DAE} is to apply implicit multistep
methods for the time discretization, in particular lower-order BDF schemes or the
trapezoidal rule. For a detailed description of these methods, we refer to e.g.~\cite{Voigtmann06, HW91}.
The advantage of BDF methods is the low computational cost compared to implicit Runge-Kutta methods.
However, they may have bad stability properties (cf.~\cite{Voigtmann06}) and, in particular, they are not symplectic.

\subsection{RLC circuit}

Consider the graph consisting of four boundary edges and two diagonal edges of a square (see Figure \ref{fig:circuit_graph}). On each edge of this graph, we have a pair of capacitor (with capacitance $C_i=1$, $i=1,\ldots,6$) and inductor (with inductance $L_i=1$, $i=1,\ldots,5$) except on one edge.\footnote{The values for inductance and capacitance are just chosen for demonstration purpose of the variational integrator. For real circuits, the values are typically of different order, resulting in a dynamical behavior on a different scale as in our numerical examples.} On this edge, there is only one capacitor which leaves a degenerate Lagrangian.
The corresponding planar graph consists of $n=6$ branches and $m+1=4$ nodes, thus we have $l = 3$ meshes.  

\begin{figure}[htb]
 \centering
\includegraphics[width=0.5\textwidth]{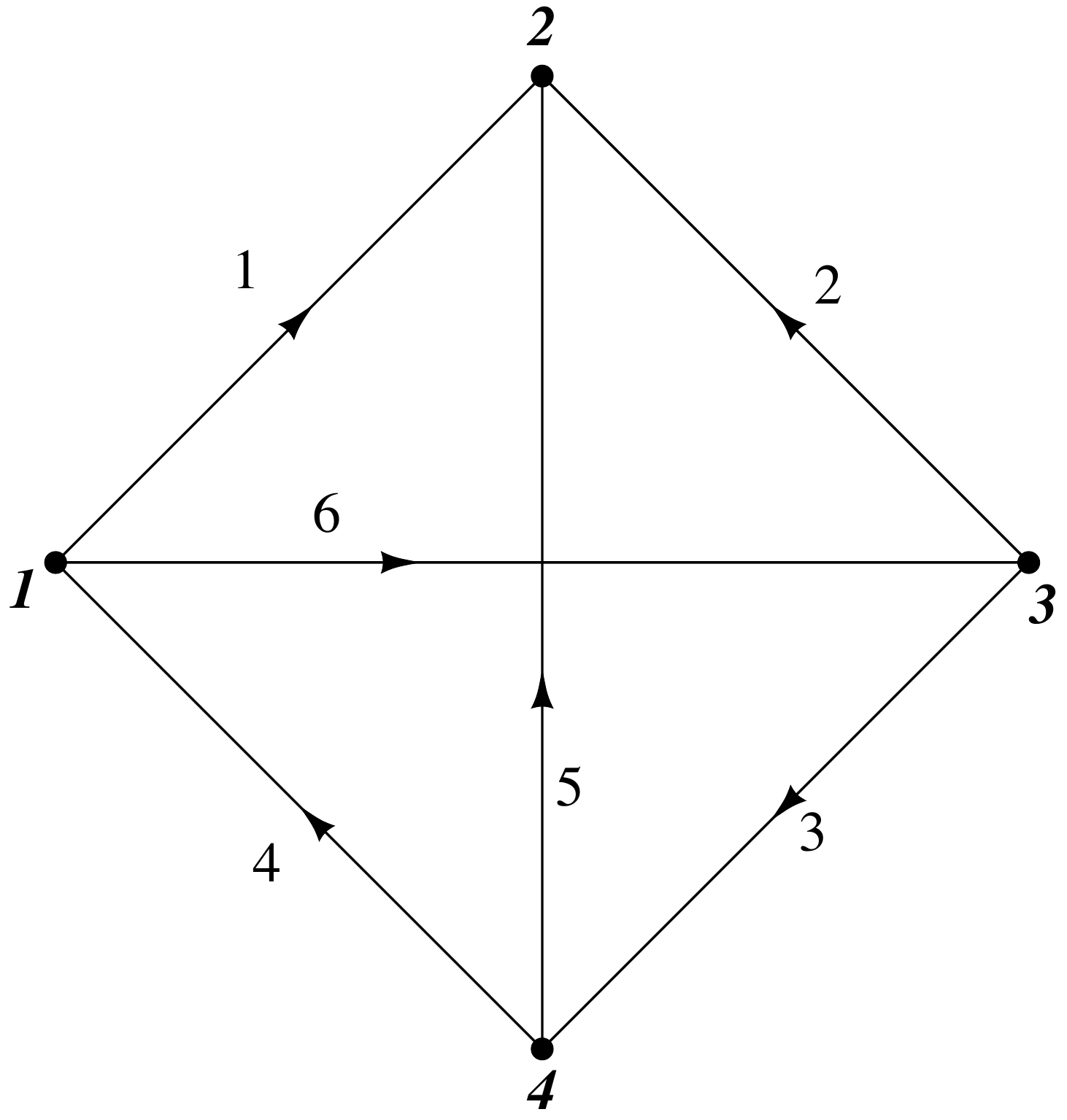}
 \caption{Graph representation of a RLC circuit.}
 \label{fig:circuit_graph}
\end{figure}

The matrix Kirchhoff Constraint matrix $K\in \mathbb{R}^{n,m}$ and the Fundamental Loop matrix $K_2\in \mathbb{R}^{n,n-m}$ are (with the fourth node assumed to be grounded)

\begin{equation}
K = \left(\begin{array}{ccc} 1 & -1 & 0\\ 0 & -1 & 1 \\ 0 & 0 & 1 \\ -1 & 0 & 0\\ 0 & -1 & 0 \\ 1 & 0 & -1  \end{array}\right), \quad\quad 
K_2 = \left(\begin{array}{ccc} 1 & 0 & -1\\ 0 & -1 & 1 \\ 0 & 1 & 0 \\ 1 & 0 & 0\\ -1 & 1 & 0 \\ 0 & 0 & 1  \end{array}\right).
\end{equation}

The matrix $K_2^TLK_2$ is non-singular with $L = \text{diag}(L_1,\ldots,L_5,0)$; thus, the degeneracy of the system is eliminated by the constraints on the system, and all three variational integrators derived in Section~\ref{sec:disvar} can be applied.

\begin{figure}[htb]
 \centering
\begin{tabular}{cc}
\includegraphics[width=0.45\textwidth]{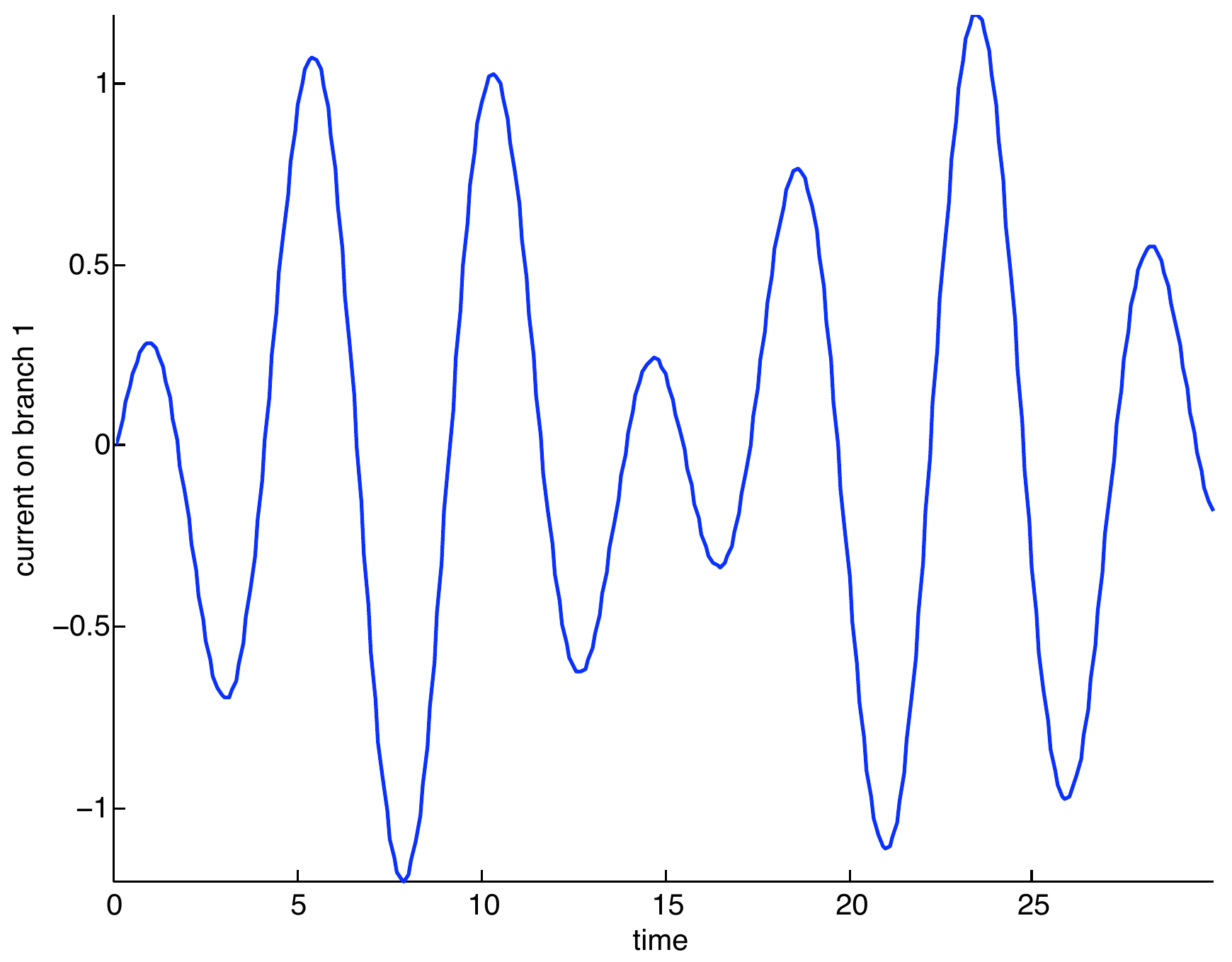} &\includegraphics[width=0.45\textwidth]{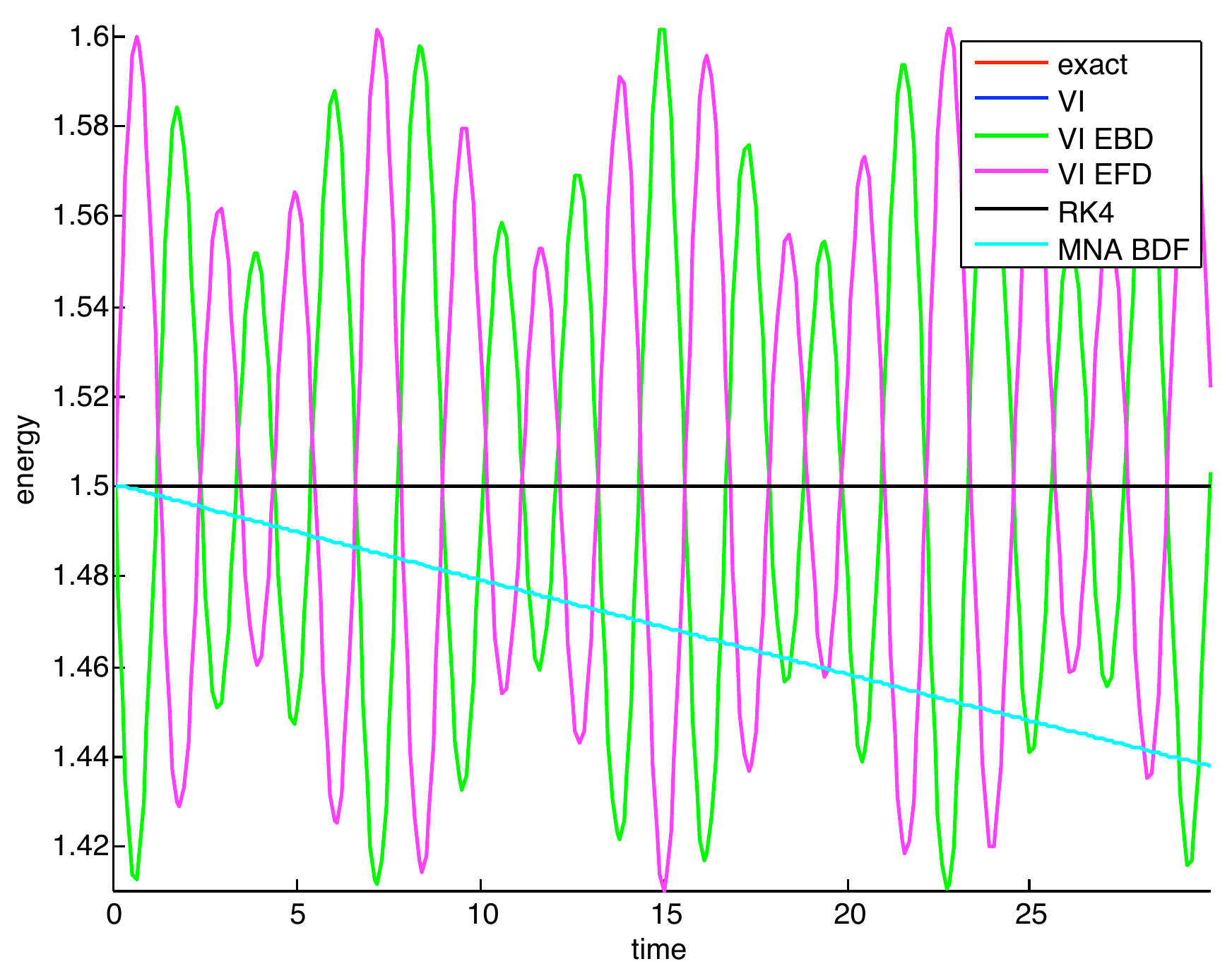} \\
\footnotesize{a)} & \footnotesize{b)} 
\end{tabular}
 \caption{LC circuit (no resistors) with step size $h = 0.1$. a) The oscillating behavior of the current on the first branch is shown. b) Comparison of the exact energy behavior (exact) and the numerical solution using the three different variational integrators, midpoint rule (VI), backward Euler (VI EBD), and forward Euler (VI EFD), a Runge-Kutta method of fourth order (RK), and a BDF method of second order based on MNA (MNA BDF). The energy is (qualitatively) preserved for VI, VI EBD, VI EFD, and RK. The use of BDF leads to an artificial energy decay.}
 \label{fig:graph_currents}
\end{figure}
In Figure \ref{fig:graph_currents} a) the oscillating behavior of the current on the first branch is shown (the currents on the other branches behave in a similar way). For the energy behavior of the LC circuit, we compare the exact solution with solutions obtained with the three different variational integrators, a Runge-Kutta method of fourth order, and a BDF method of second order. Since no resistor or voltage source is involved, this energy should be preserved. For the variational integrator based on the midpoint rule (VI), the energy is exactly preserved since the electric potential is only quadratic. For this relatively short integration time span, we observe that also the solution with the Runge-Kutta scheme (RK4) preserves the energy (the red, blue and black lines in Figure~\ref{fig:graph_currents} b) lie on top of each other). Using the forward (VI EFD, magenta) or backward Euler (VI EBD, green) variational integrator the energy oscillates around its real value, however, no dissipation or artificial growth of the energy occurs in contrast to the solution obtained by a BDF method (MNA BDF): Here, the energy rapidly decreases using a second order BDF method (see the cyan line in Figure~\ref{fig:graph_currents} b)). These results are based on a step size of $h=0.1$. Increasing the step size to $h=0.4$, a phase shifting of the currents computed with variational integrators is observed (which is a typical behavior observed for variational integrators) in contrast to solutions obtained by the Runge-Kutta scheme (see Figure \ref{fig:graph_currents2} a)).\footnote{Note that the Runge-Kutta method is of higher order (fourth order) than the variational integrators, and thus solution curves are of higher accuracy.} However, considering the energy behavior shown in Figure \ref{fig:graph_currents2} b), energy is dissipating for the Runge-Kutta solution, whereas for the variational integrators it (its median, respectively) is preserved. The performance of the BDF method is even worse: The energy is dissipating very fast. 
This can also be observed considering the current in Figure \ref{fig:graph_currents2} a). The amplitude of the current oscillations is damped to almost zero after a certain integration time.
\begin{figure}[htb]
 \centering
\begin{tabular}{cc}
\includegraphics[width=0.45\textwidth]{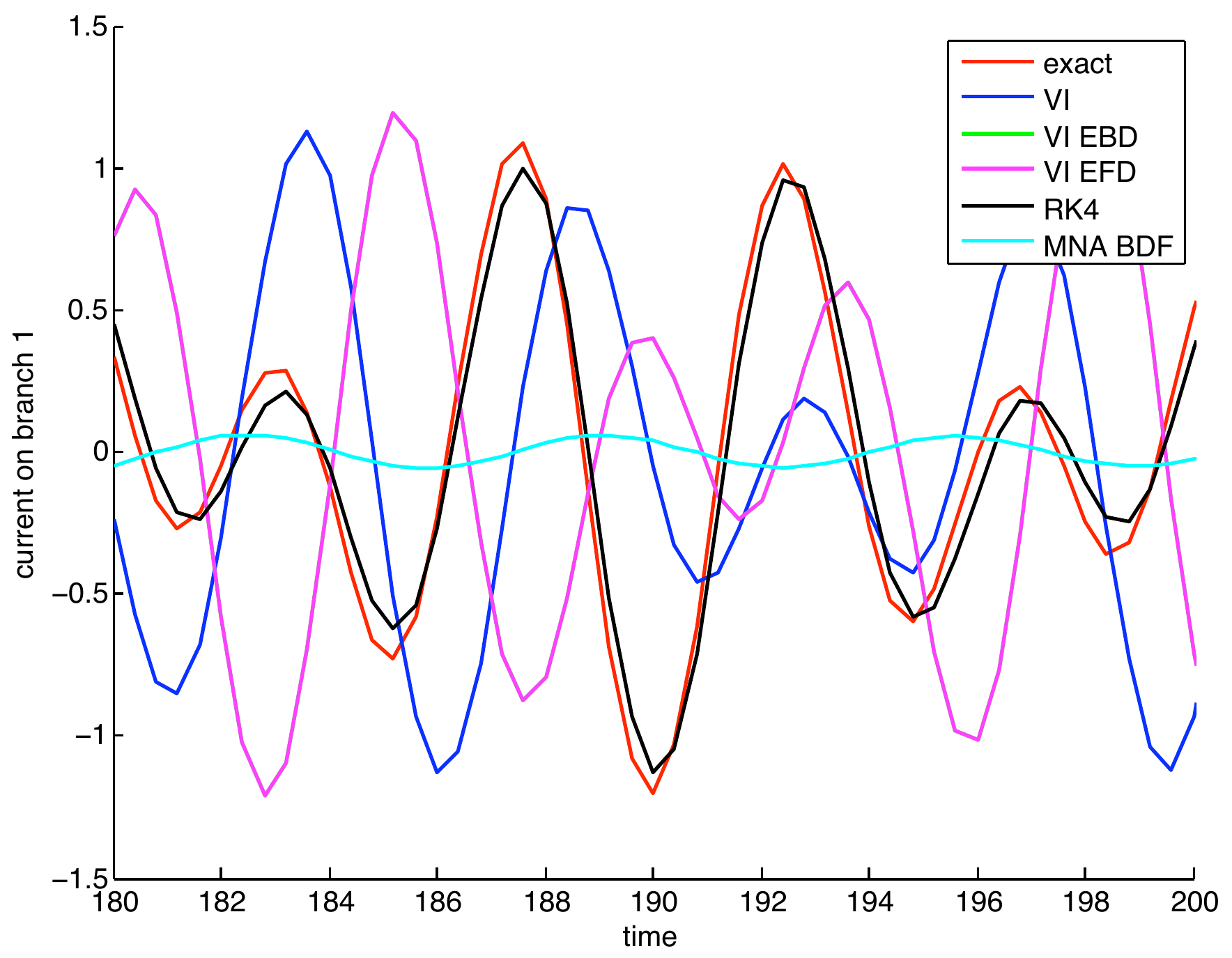} &\includegraphics[width=0.45\textwidth]{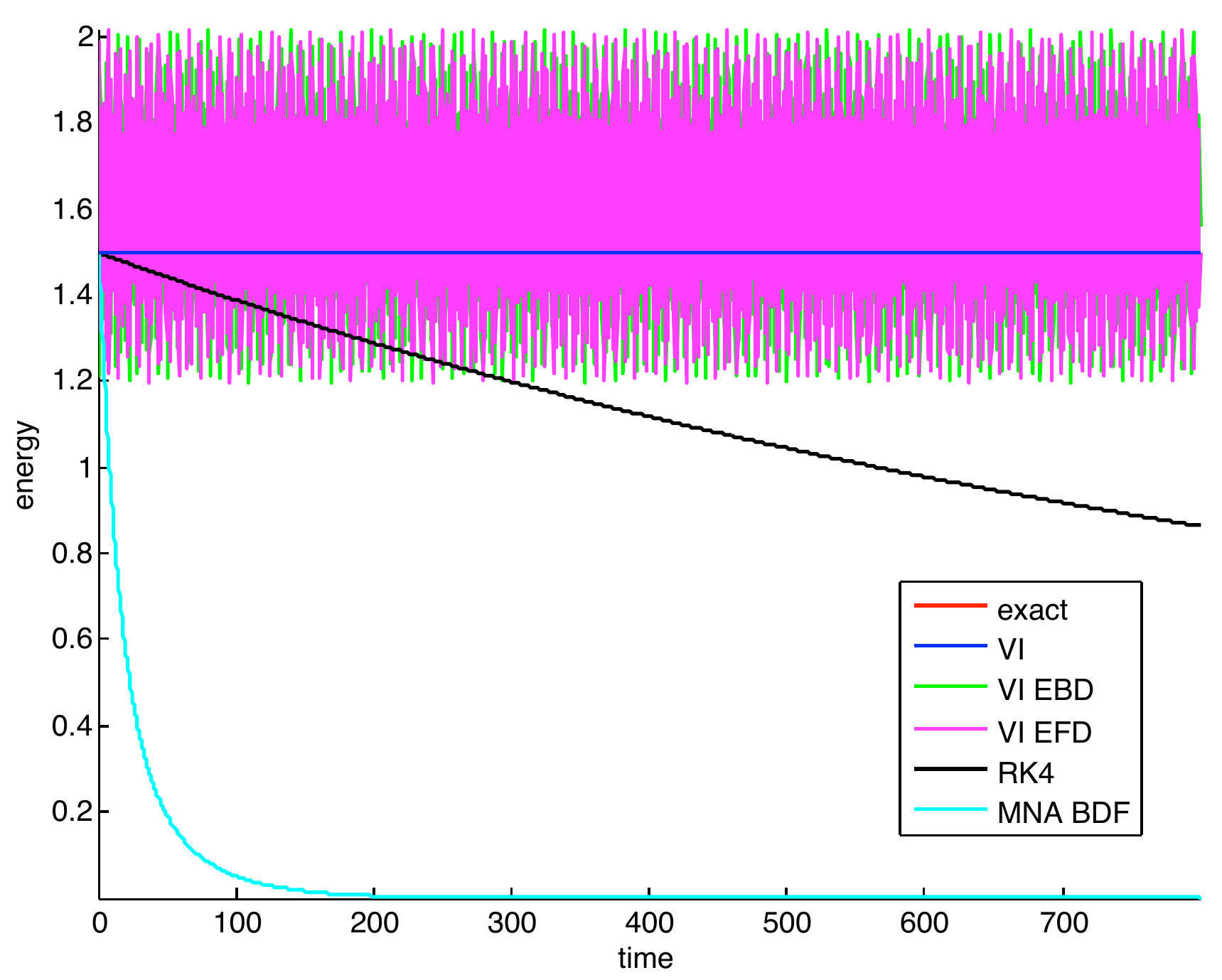} \\
\footnotesize{a)} & \footnotesize{b)} 
\end{tabular}
 \caption{LC circuit (no resistors) with step size $h = 0.4$. Comparison of the exact solution (exact) and the numerical solution using the three different variational integrators, midpoint rule (VI), backward Euler (VI EBD), and forward Euler (VI EFD), a Runge-Kutta method of fourth order (RK4), and a BDF method of second order based on MNA (MNA BDF). a) The use of variational integrators (VI, VI EBD, VI EFD) leads to a phase shifting in the numerical solution of the current. With the BDF method, the oscillations are damped out.  b) The energy is (qualitatively) preserved for VI, VI EBD, and VI EFD. The use of RK4 and BDF leads to an artificial energy decay.}
  \label{fig:graph_currents2}
\end{figure}

We investigate this phenomenon even more by comparing the amplitude of the oscillating branch current and the corresponding spectrum in frequency domain for the different integration methods. The solution of the first branch current oscillates with two frequencies, $\omega_1 = 1$ and $\omega_2 = \sqrt{2}$. 
In Figure~\ref{fig:graph_ampl}, the branch current and the frequency spectrum for the discrete solution computed with two different step sizes ($h=0.1, 0.4$) and different integrators are compared. 
For a bigger step size $h$, the amplitude of the oscillations and the spectrum of the higher frequency are artificially damped for the Runge-Kutta and the BDF method. However, using a variational integrator, the frequency is slightly shifted, but the spectrum is much better preserved. 
For $h=0.4$, we compute the frequency spectrum for three different time intervals, $\left[0,\frac{T}{3}\right]$, $\left[\frac{T}{3},\frac{2T}{3}\right]$, and $\left[\frac{2T}{3},T\right]$. Corresponding to our analytical result regarding frequency preservation (see Section~\ref{subsec:frequ}), we see in Figure~\ref{fig:graph_frequ_short} that the frequency spectrum is the same independent on the integration time using a variational integrator. However, using the Runge-Kutta or BDF method it is damped choosing a time interval after a longer integration time. 
 
In each branch of the circuit, we now add a small resistor with resistance $R_i = 0.001,\,i=1,\ldots,6$.
Again, we compare the oscillating behavior and the energy behavior of the numerical solution obtained by different integrators. Due to the resistors, the energy in the system decays. The rate of energy decay is shown for the exact solution in Figure~\ref{fig:graphR_currents2} b). The variational integrator solution respects this energy decay much better than the Runge-Kutta and BDF scheme.

\begin{figure}[htb]
 \centering
\begin{tabular}{cc}
\includegraphics[width=0.45\textwidth]{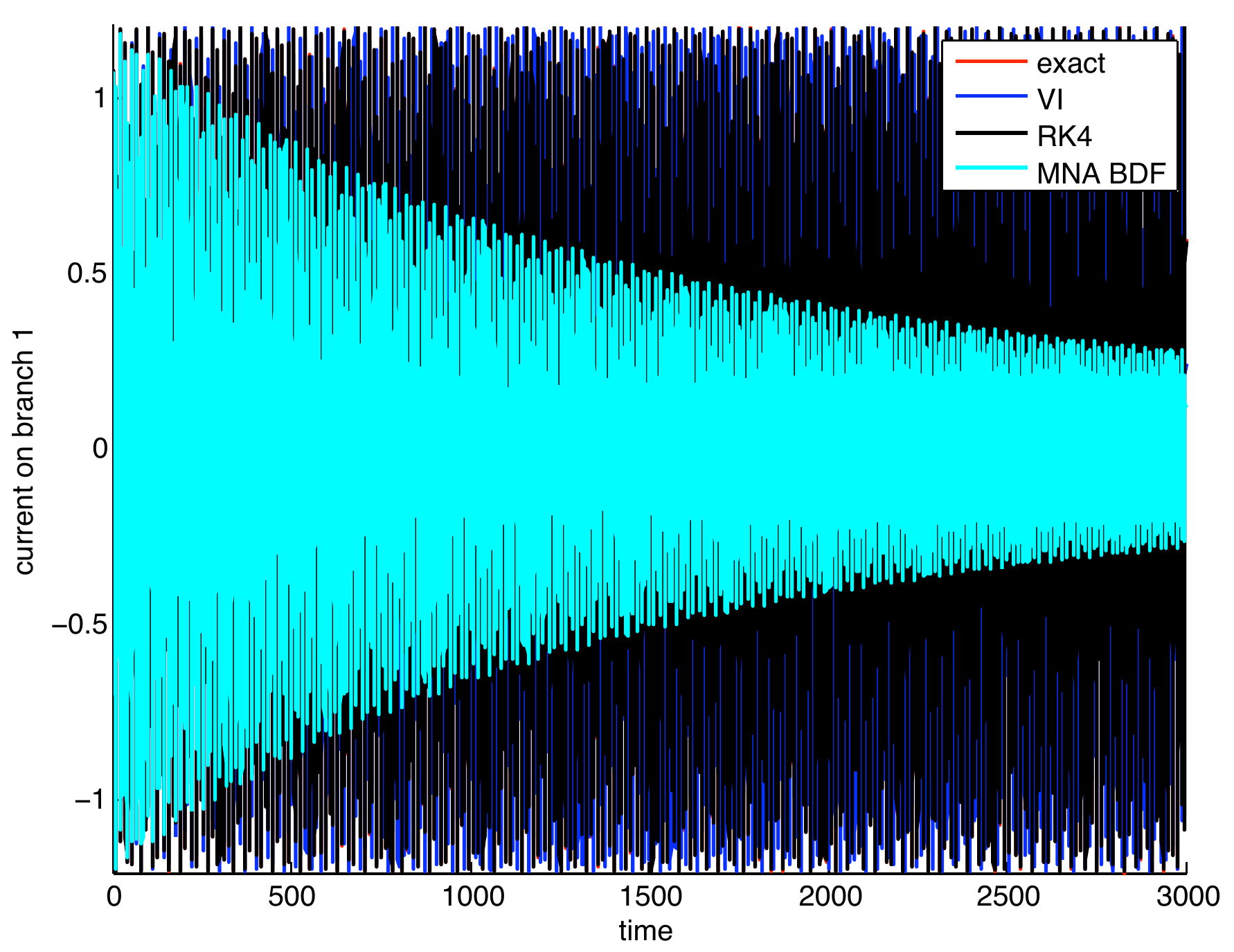} & \includegraphics[width=0.45\textwidth]{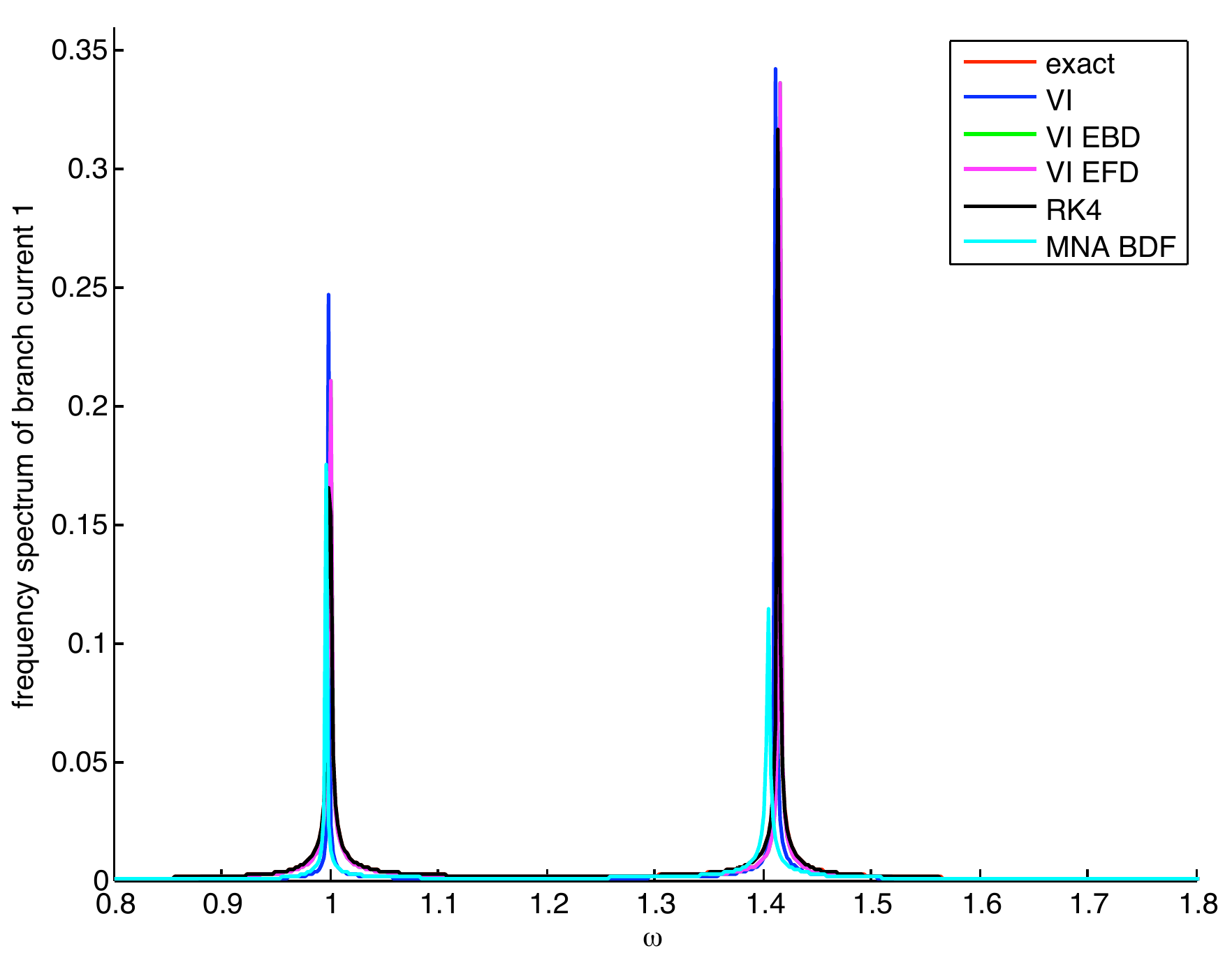} \\
  \footnotesize{$h=0.1$} & \footnotesize{$h=0.1$}\\
 \includegraphics[width=0.45\textwidth]{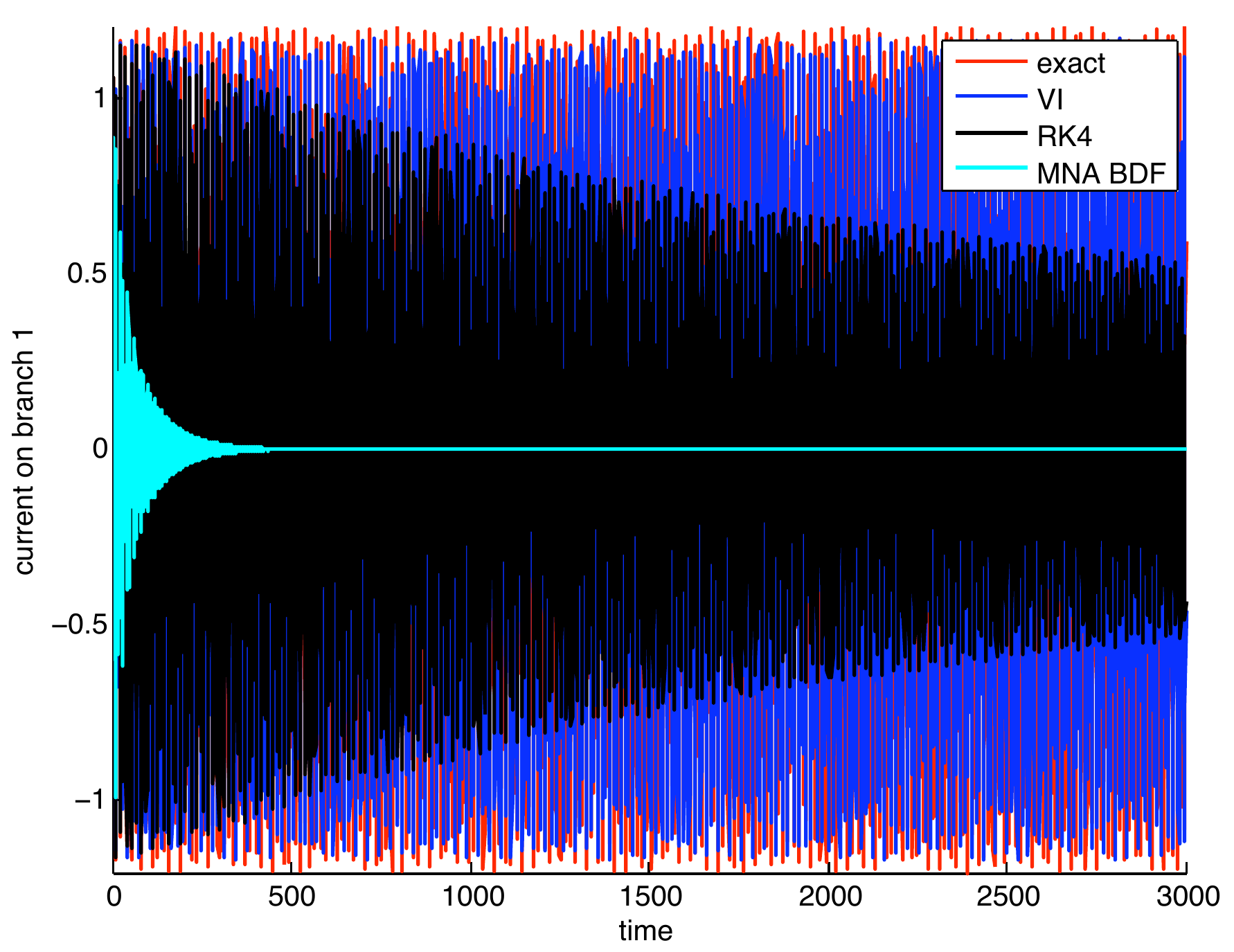} & \includegraphics[width=0.45\textwidth]{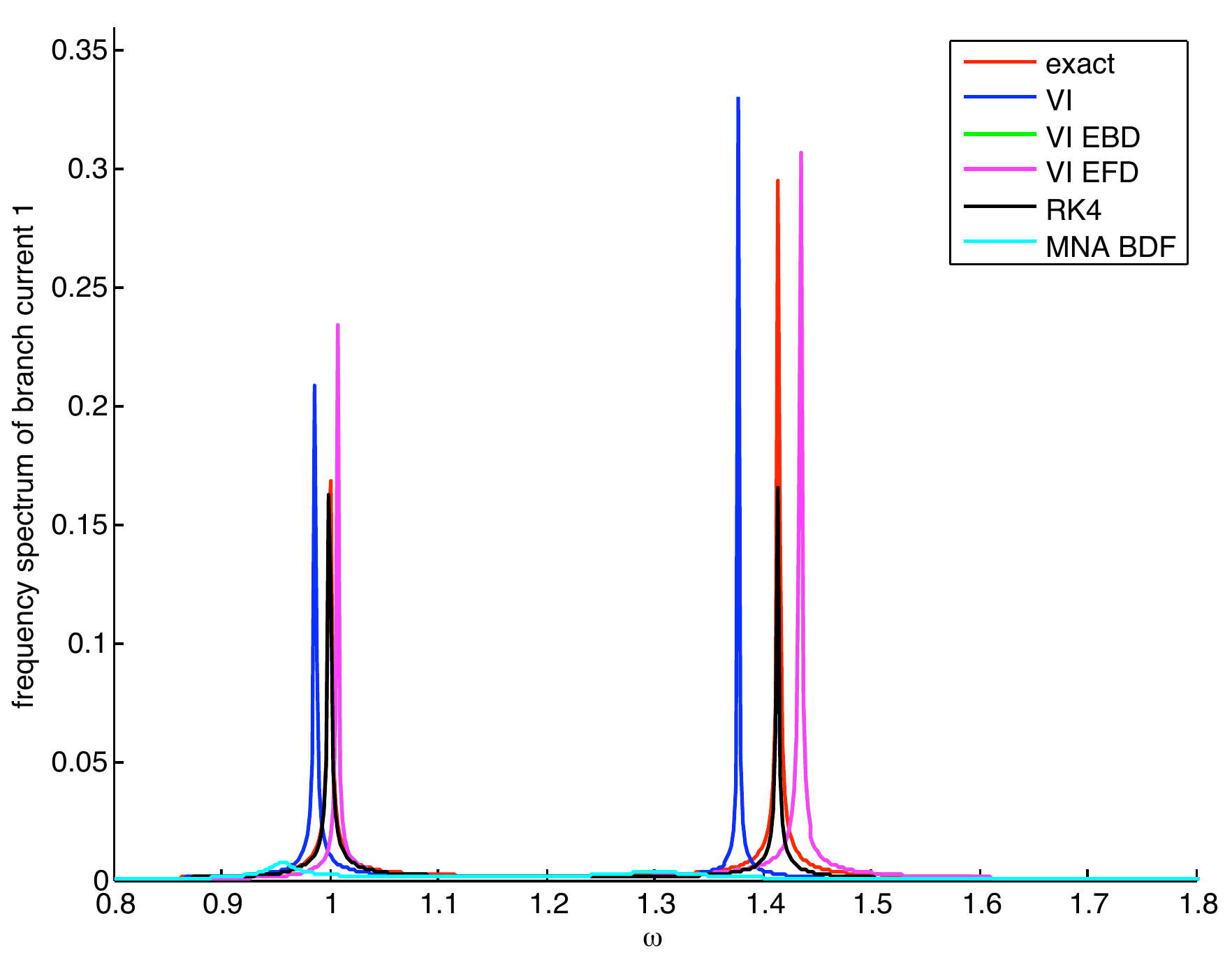} \\
\footnotesize{$h=0.4$} & \footnotesize{$h=0.4$} \\
\end{tabular}
\caption{Current of first branch of LC circuit (left) and corresponding frequency spectrum (right). Comparison of the exact solution (exact) and the numerical solution using the three different variational integrators, midpoint rule (VI), backward Euler (VI EBD), and forward Euler (VI EFD), a Runge-Kutta method of fourth order (RK4), and a BDF method of second order based on MNA (MNA BDF). For increasing step size $h$, the damping of the amplitude of the oscillations and of the higher frequency spectrum increases using RK4 and BDF. For VI, VI EBD, and VI EFD, the frequency is slightly shifted, but the spectrum and the current amplitude is much better preserved.}
\label{fig:graph_ampl}
\end{figure}
\begin{figure}[htb]
 \centering
\includegraphics[width=0.5\textwidth]{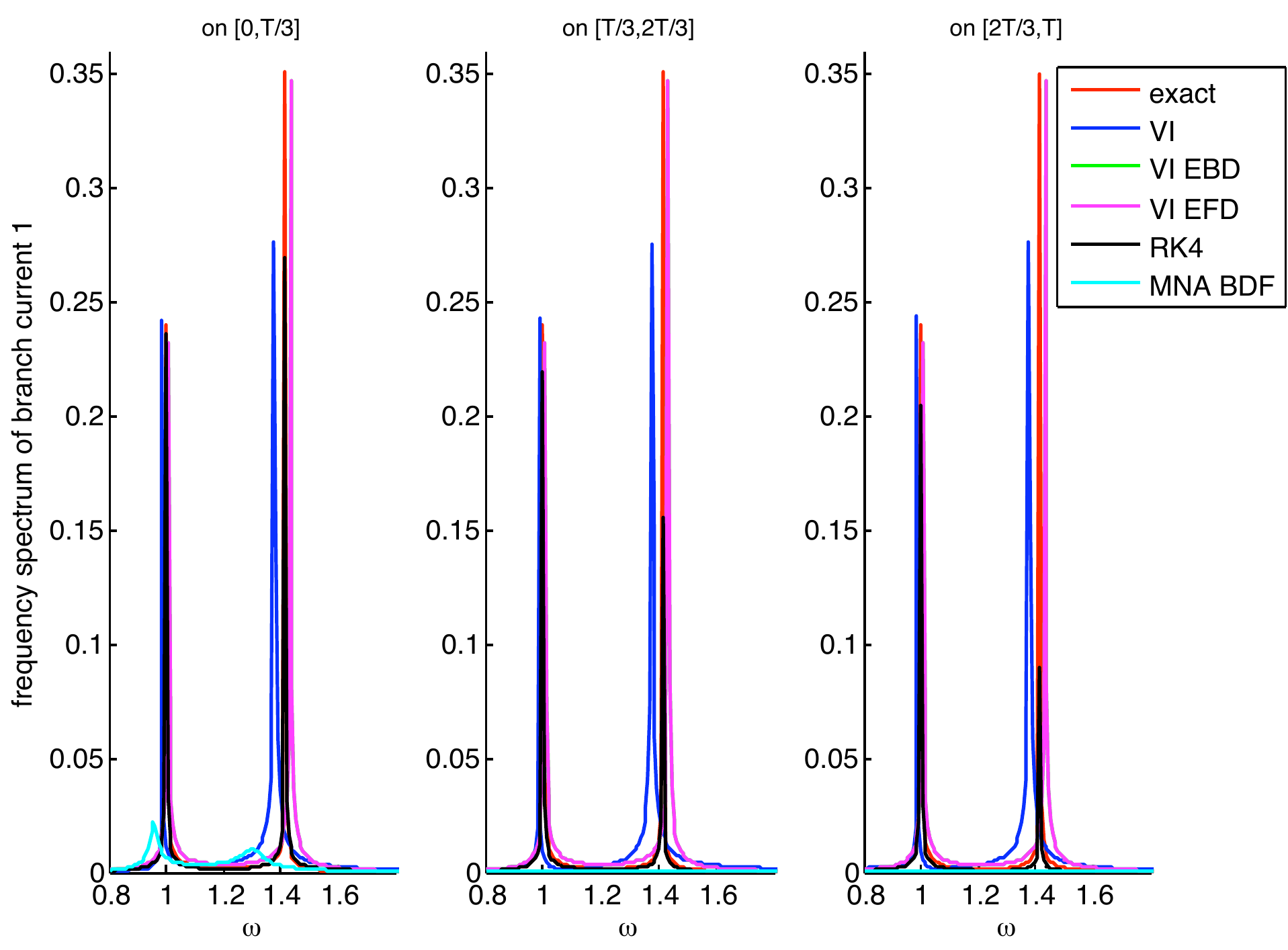}
\caption{Frequency spectrum of first branch current of LC circuit (no resistors) ($h=0.4$) computed on time interval $[0,T/3]$, $[T/3,2T/3]$, $[2T/3,T]$. Comparison of the exact solution (exact) and the numerical solution using the three different variational integrators, midpoint rule (VI), backward Euler (VI EBD), and forward Euler (VI EFD), a Runge-Kutta method of fourth order (RK4), and a BDF method of second order based on MNA (MNA BDF). Using RK4 and BDF the spectrum is damped for higher integration times and preserved using a variational integrator.}
\label{fig:graph_frequ_short}
\end{figure}

\begin{figure}[htb]
 \centering
\begin{tabular}{cc}
\includegraphics[width=0.45\textwidth]{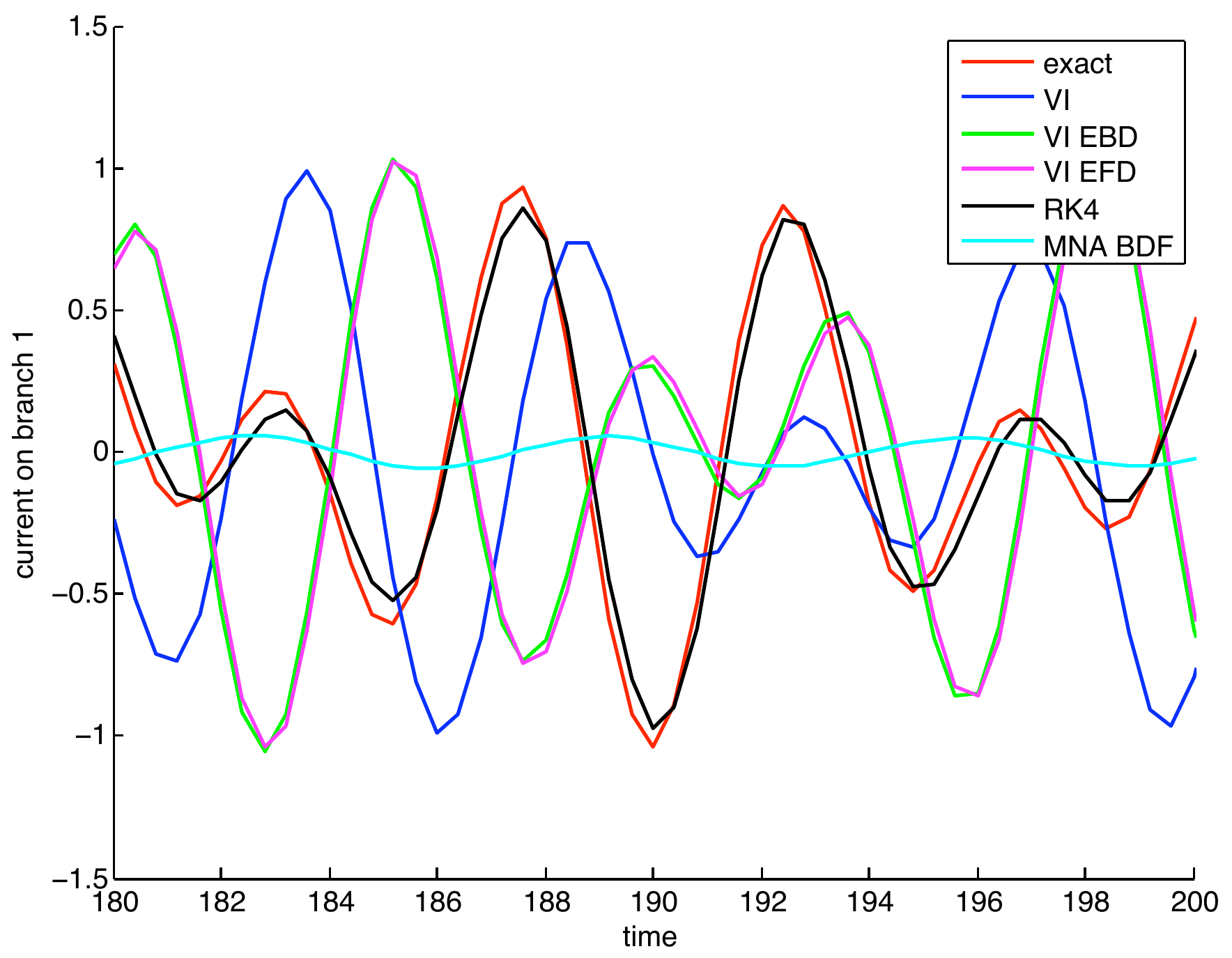} &\includegraphics[width=0.45\textwidth]{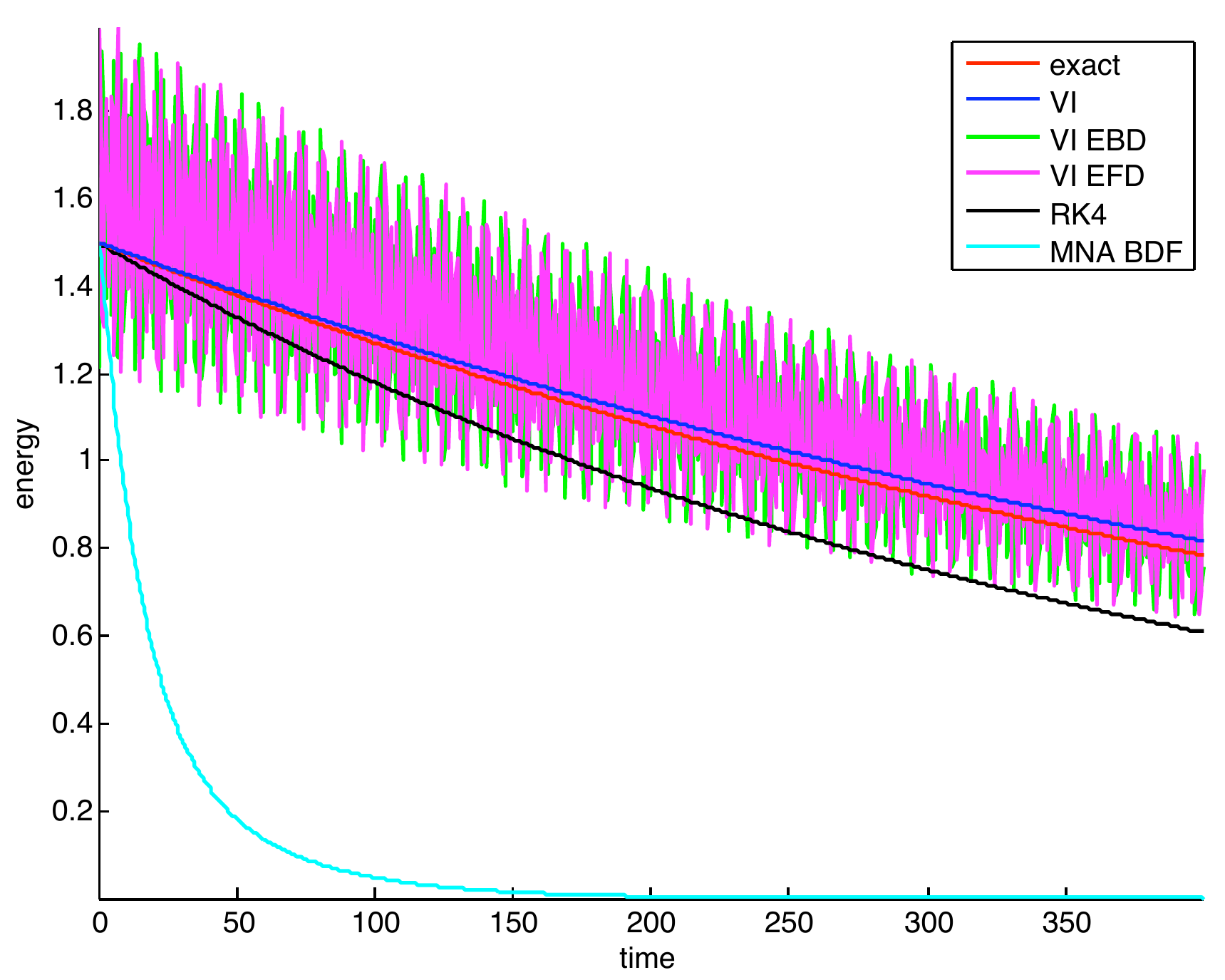} \\
\footnotesize{a)} & \footnotesize{b)} 
\end{tabular}
 \caption{LCR circuit (with resistors) with step size $h = 0.4$. Comparison of the exact solution (exact) and the numerical solution using the three different variational integrators, midpoint rule (VI), backward Euler (VI EBD), and forward Euler (VI EFD), a Runge-Kutta method of fourth order (RK4), and a BDF method of second order based on MNA (MNA BDF). a) The use of variational integrators (VI, VI EBD, VI EFD) leads to a phase shifting in the numerical solution of the current. With the BDF method, the oscillations are artificially damped out.  b) The energy decay is much better preserved for VI, VI EBD, and VI EFD as for RK4 and BDF.}
  \label{fig:graphR_currents2}
\end{figure}

\subsection{Oscillating LC circuit}\label{subsec:osLC}

As second example, we consider the LC circuit given in Figure \ref{fig:circuit1}. It consists of two inductors with inductance $L_1=1$ and $L_2=1$ and two capacitors with capacitance $C_1=1$ and $C_2=10$ and thus has $n=4$ branches and $m+1=3$ nodes. 
The Kirchhoff Constraint matrix $K\in \mathbb{R}^{n,m}$ and the Fundamental Loop matrix $K_2\in \mathbb{R}^{n,n-m}$ are (with the third node assumed to be grounded)

\begin{equation}
K = \left(\begin{array}{cc} 1 & 0\\ 0 & -1 \\ 0 & -1 \\ -1 &1  \end{array}\right), \quad\quad 
K_2 = \left(\begin{array}{cc} 1 & 0\\ 0 &1 \\ 1 & -1 \\ 1 & 0  \end{array}\right).
\end{equation}
\begin{wrapfigure}{r}{0.38\textwidth}
\begin{center}
\includegraphics[width = 0.38\textwidth]{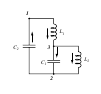}
\end{center}
\caption{Oscillating LC circuit.}
\label{fig:circuit1}
\end{wrapfigure}

With $n_C=2$ and $K_C =  \left(\begin{array}{cc} 0 & -1 \\ -1 &1  \end{array}\right)$ having full rank, we can follow from Proposition~\ref{prop:degeneracy}, that the reduced Lagrangian system is non-degenerate, i.e.~all three variational integrators derived in Section~\ref{sec:disvar} can be applied.

In Figure \ref{fig:circ1_currents}, the oscillating behavior of the branch currents on the inductors (a)--b)) and of the branch charges on the capacitors (c)--d)) is depicted. 
For a long-time simulation, we compare the exact energy behavior of the LC circuit with the energy behavior of the solution obtained with the three different variational integrators (VI, VI EBD, VI EFD), a Runge-Kutta method of fourth order (RK4), and a BDF method of second order based on MNA (BDF MNA). As for the previous example, independent on the step size $h$, the energy is exactly preserved using VI based on midpoint rule, whereas the solutions using the Euler VI oscillate around the real energy value without dissipation or artificial growth of the energy (see Figure \ref{fig:circ1_energy}). Using the BDF method, the energy rapidly decreases for increasing time step $h$. The solution of the Runge-Kutta scheme shows a similar decreasing energy behavior for the step size $h=0.1, 0.2, 0.4$. However, for a step size of $h=0.6$, the energy seems to converge to constant value that is not the exact energy value, but slightly lower.
This might be due to the fact that the amplitude of the first lower frequency is almost preserved and only the amplitude corresponding to the higher frequency is damped out for increasing integration time, as shown in Figure \ref{fig:circ1_ampl} for different time spans and a step size of $h=0.6$. The same property is reflected by the plots in the frequency domain in Figure~\ref{fig:circ1_frequ_short} for $h=0.4$, where for increasing integration time the spectrum of the high frequency ($\omega_2 \approx 1.43$) is damped for the Runge-Kutta and BDF method and preserved using a symplectic method. This phenomenon is also confirmed by the frequency spectrum plot in Figure~\ref{fig:circ1_frequ} for different step sizes. As for the previous example, the spectrum corresponding to the higher frequency is damped out for higher step sizes $h$ for the BDF and the Runge-Kutta method.

\begin{figure}[htb]
 \centering
\begin{tabular}{cc}
\includegraphics[width=0.45\textwidth]{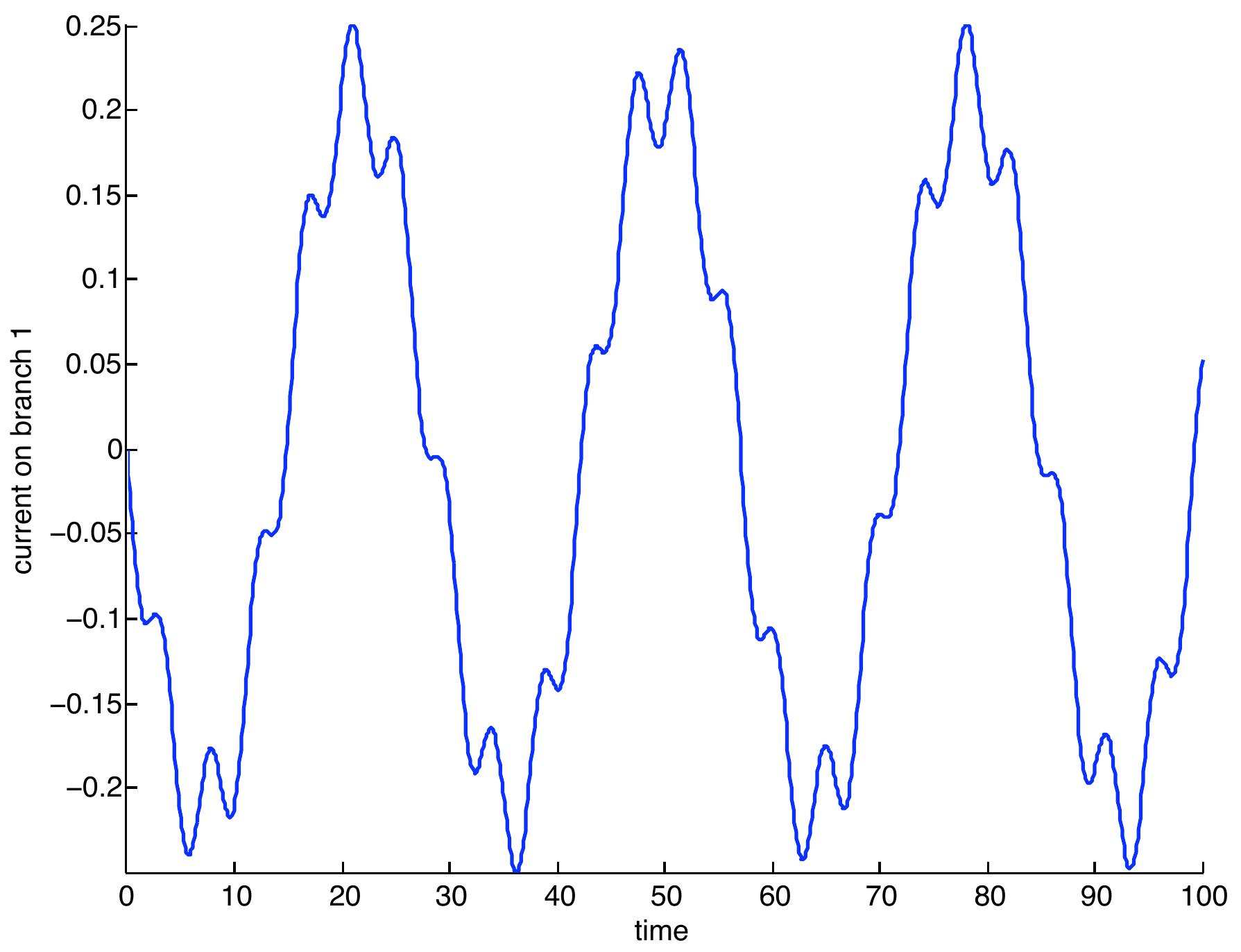} & \includegraphics[width=0.45\textwidth]{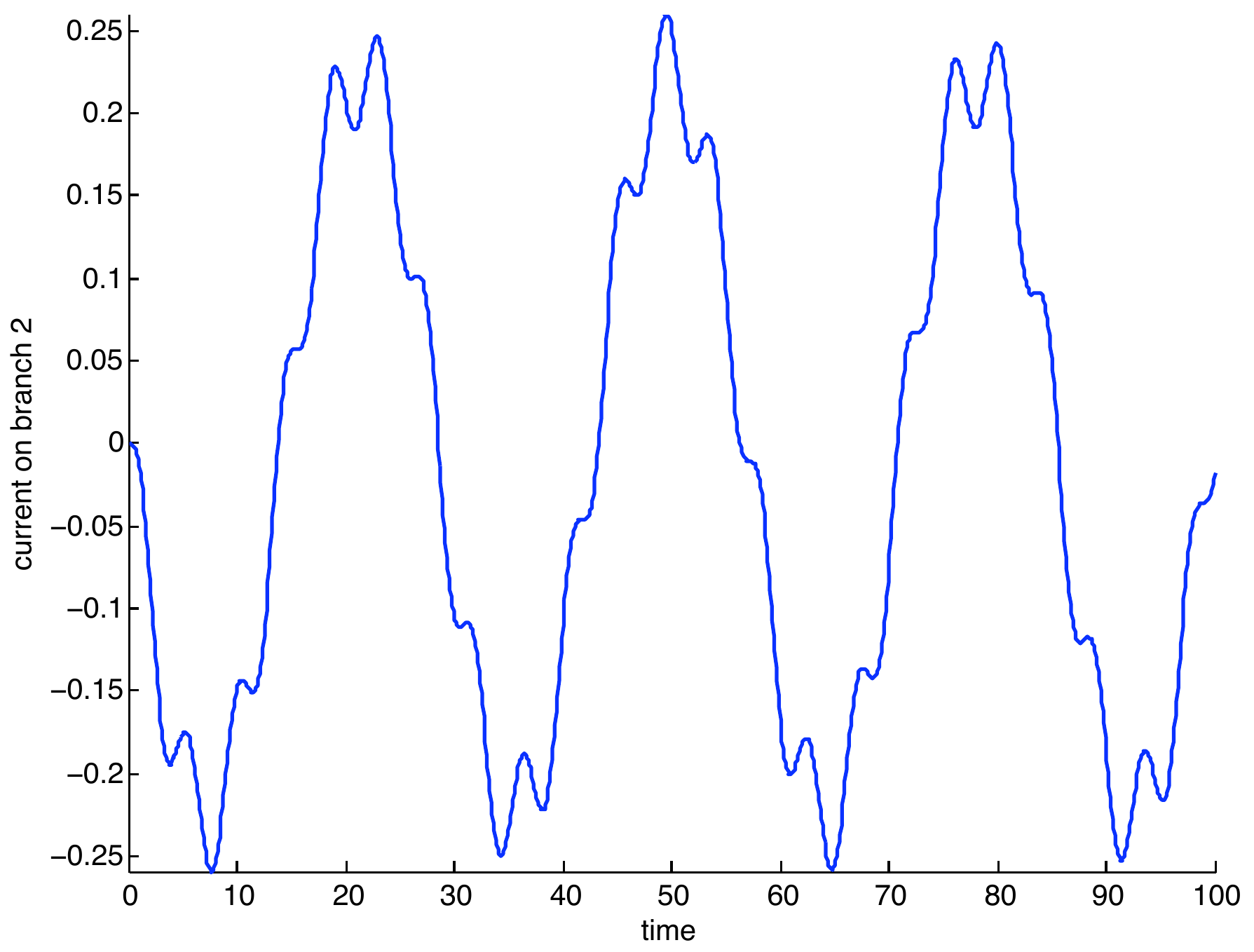} \\
  \footnotesize{a)} & \footnotesize{b)}\\
 \includegraphics[width=0.45\textwidth]{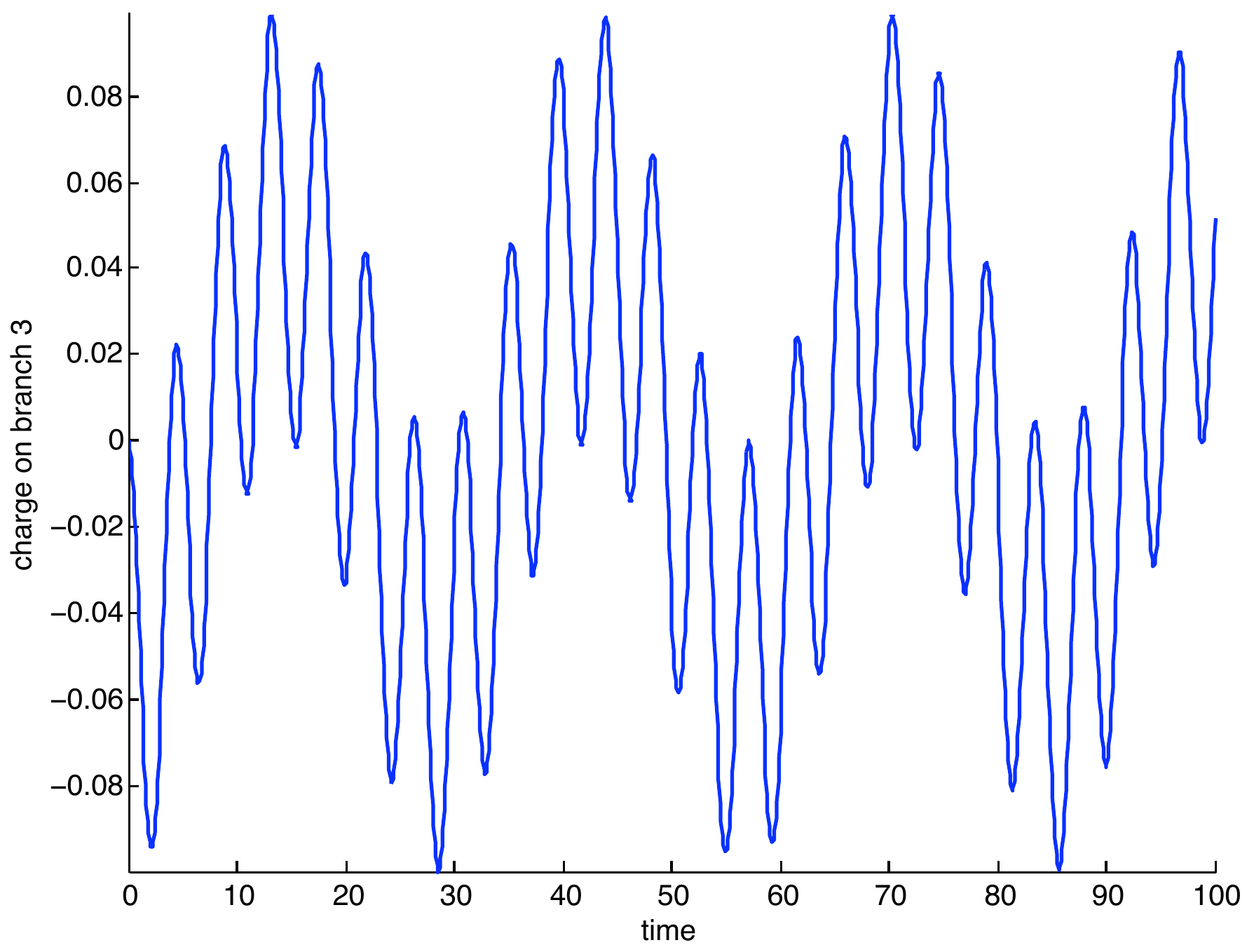} & \includegraphics[width=0.45\textwidth]{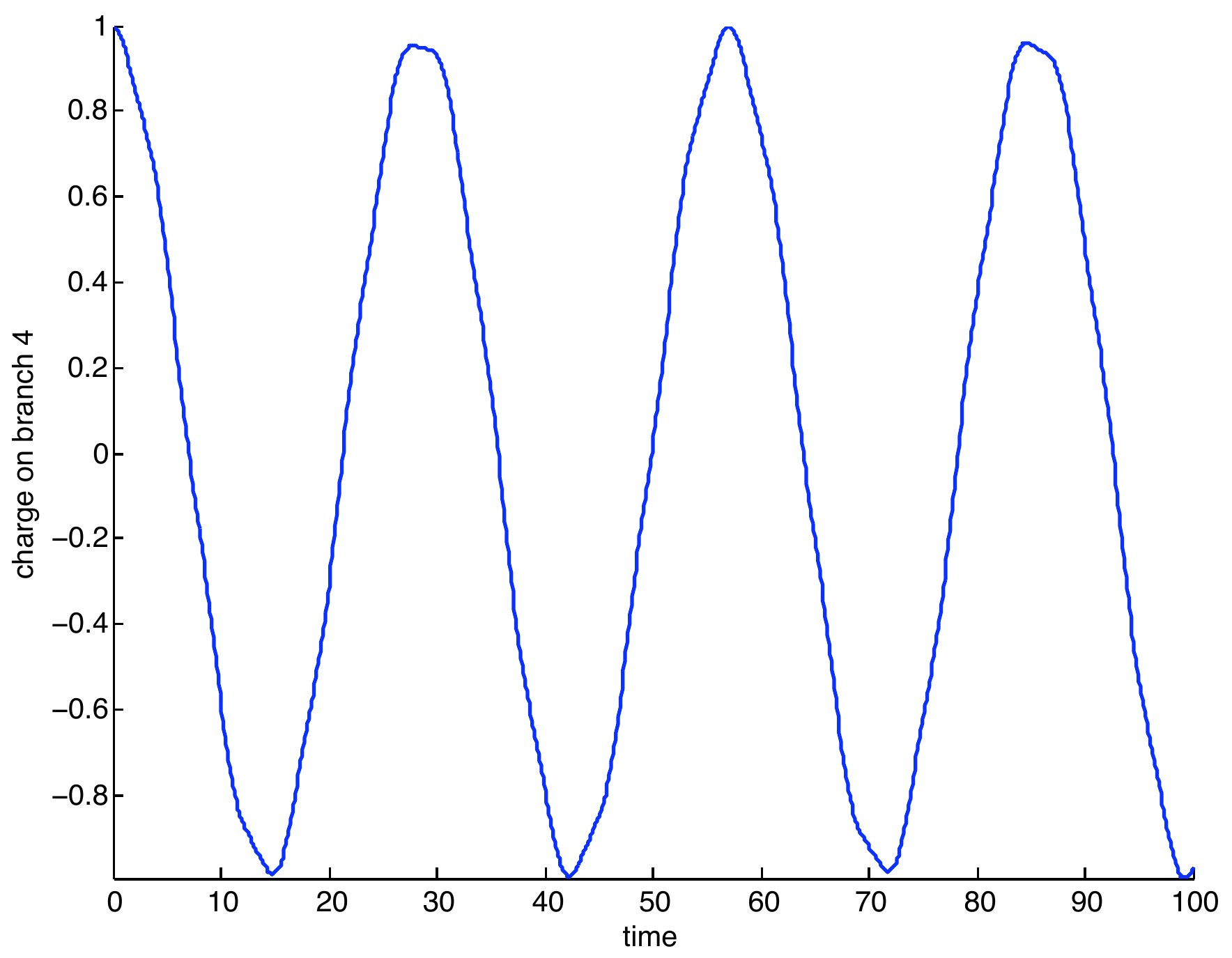} \\
\footnotesize{c)} & \footnotesize{d)} \\
\end{tabular}
\caption{LC circuit. a) Current on inductor $1$. b) Current on inductor $2$. c) Charge on capacitor $1$. d) Charge on capacitor $2$.}
\label{fig:circ1_currents}
\end{figure}

\begin{figure}[htb]
 \centering
\begin{tabular}{cc}
\includegraphics[width=0.45\textwidth]{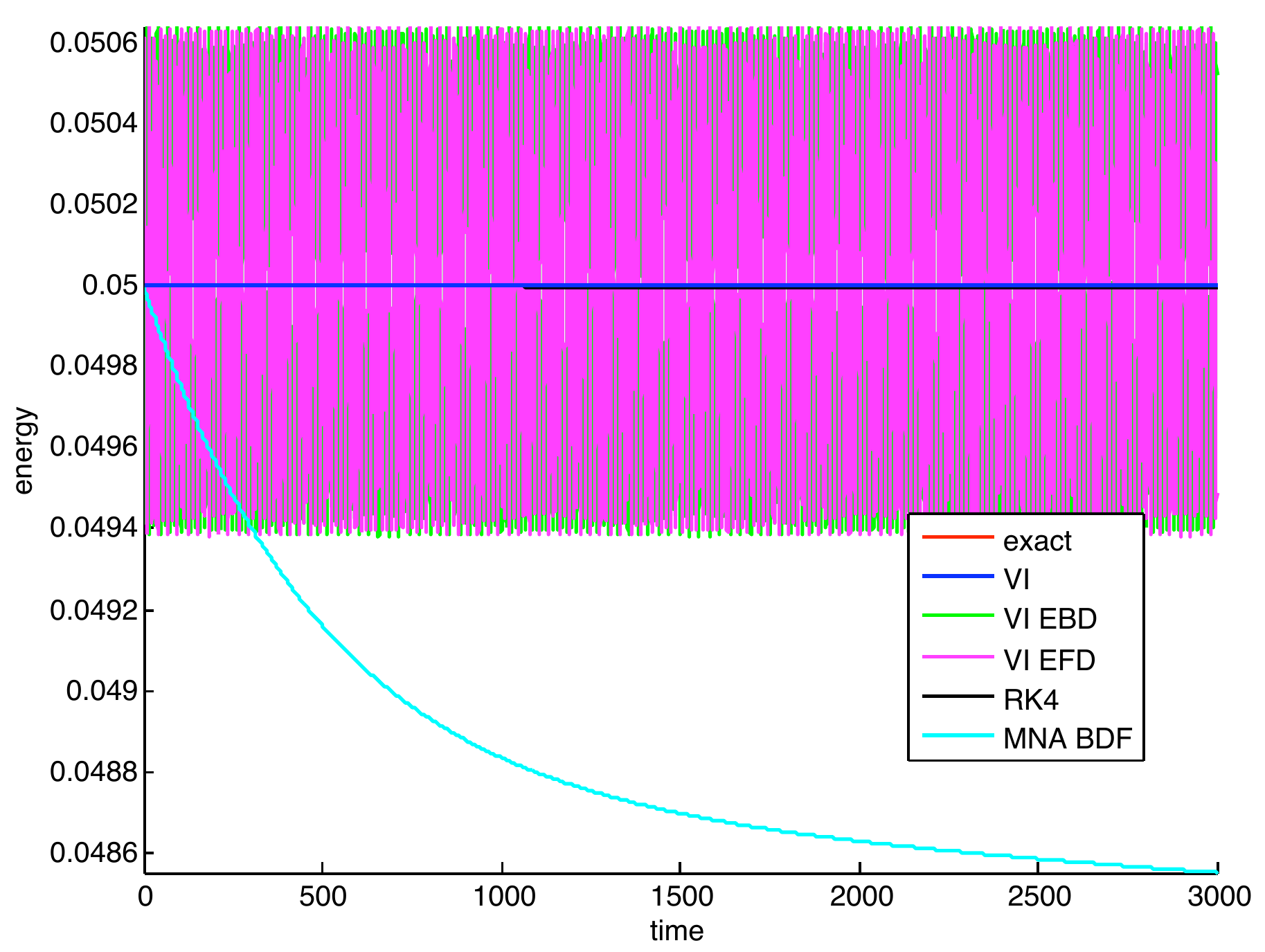} & \includegraphics[width=0.45\textwidth]{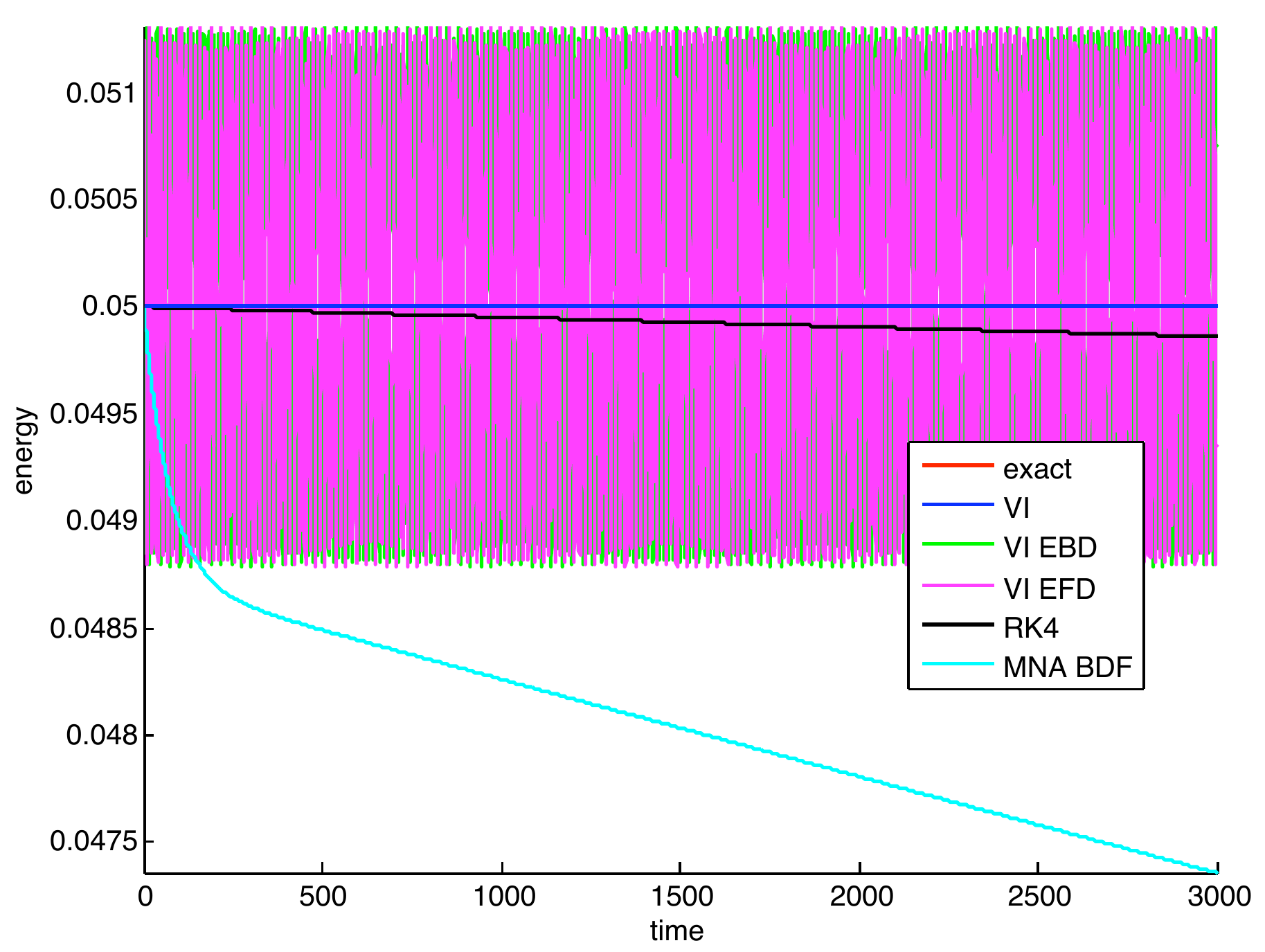} \\
  \footnotesize{$h=0.1$} & \footnotesize{$h=0.2$}\\
 \includegraphics[width=0.45\textwidth]{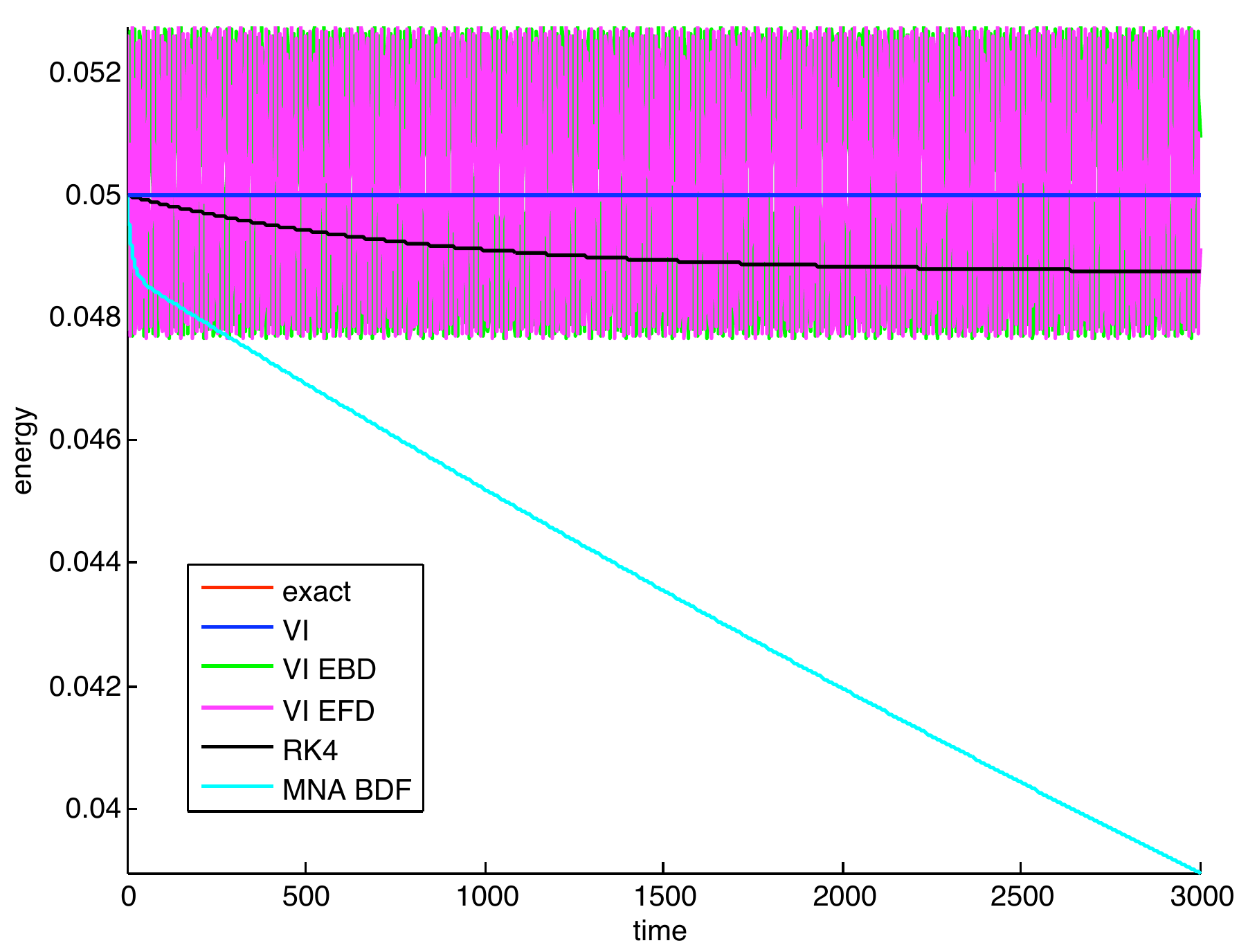} & \includegraphics[width=0.45\textwidth]{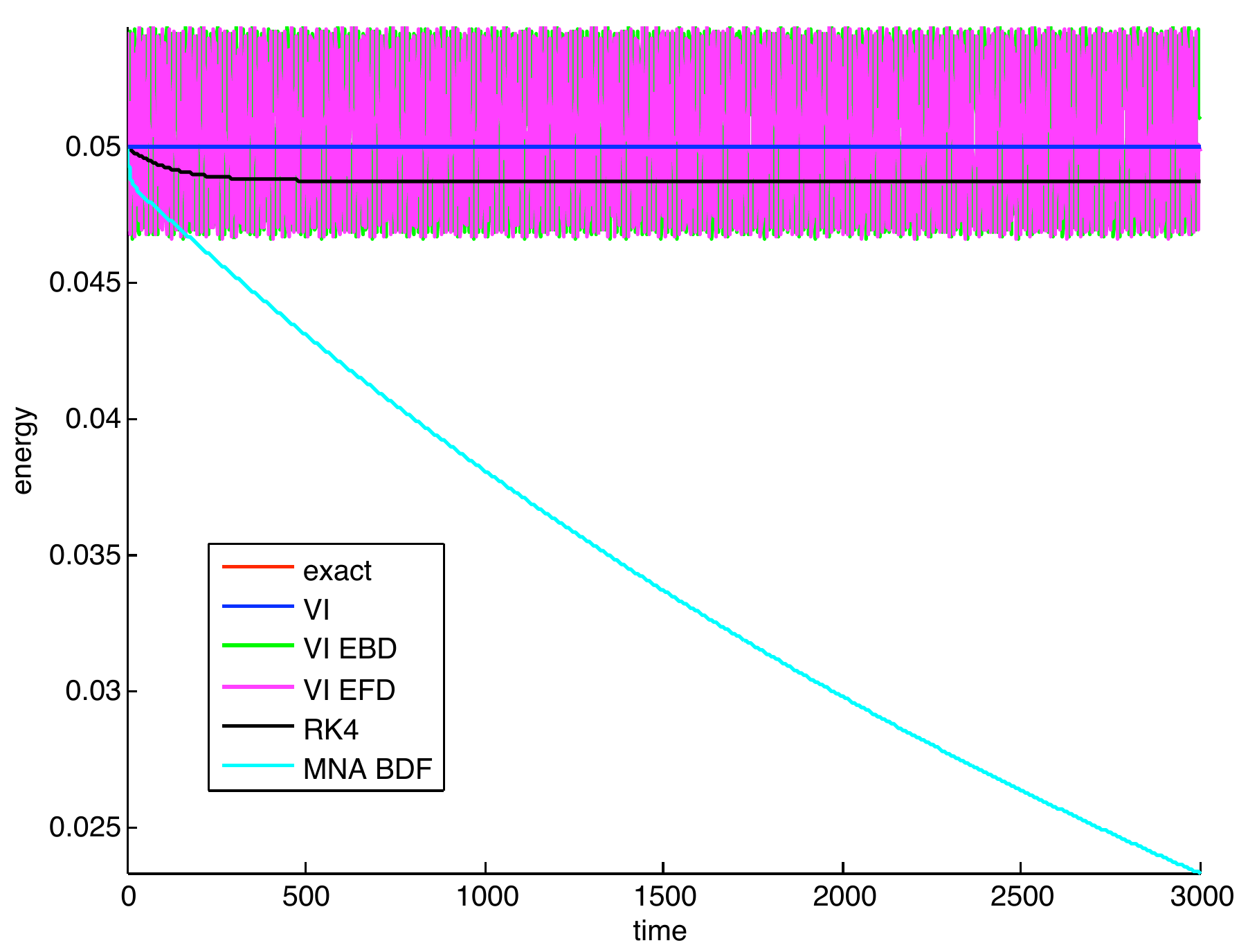} \\
\footnotesize{$h=0.4$} & \footnotesize{$h=0.6$} \\
\end{tabular}
\caption{Energy of oscillating LC circuit: Solutions obtained with the midpoint variational integrator exactly preserve the energy (VI) independent on the step size $h$. The energy computed with the forward (VI EFD) and backward (VI EBD) Euler variational integrators oscillates around the real energy value without dissipation or artificial growth. For solutions obtained with a second order BDF method (BDF MNA), the energy dissipates due to numerical errors. The solution of the Runge-Kutta scheme (RK4) dissipate energy, but seems to converge to lower constant energy value.}
\label{fig:circ1_energy}
\end{figure}

\begin{figure}[htb]
 \centering
\begin{tabular}{cc}
\includegraphics[width=0.45\textwidth]{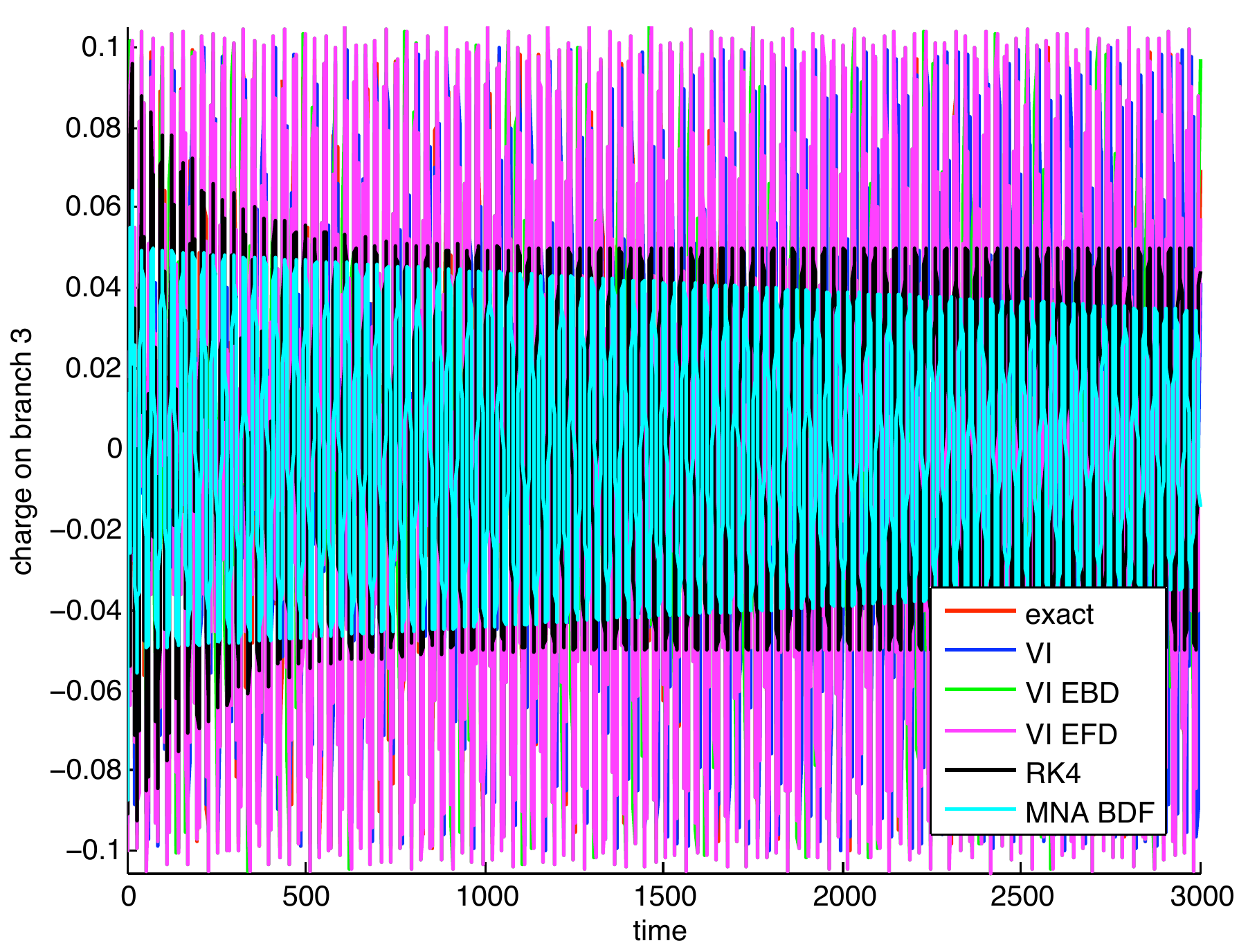} & \includegraphics[width=0.45\textwidth]{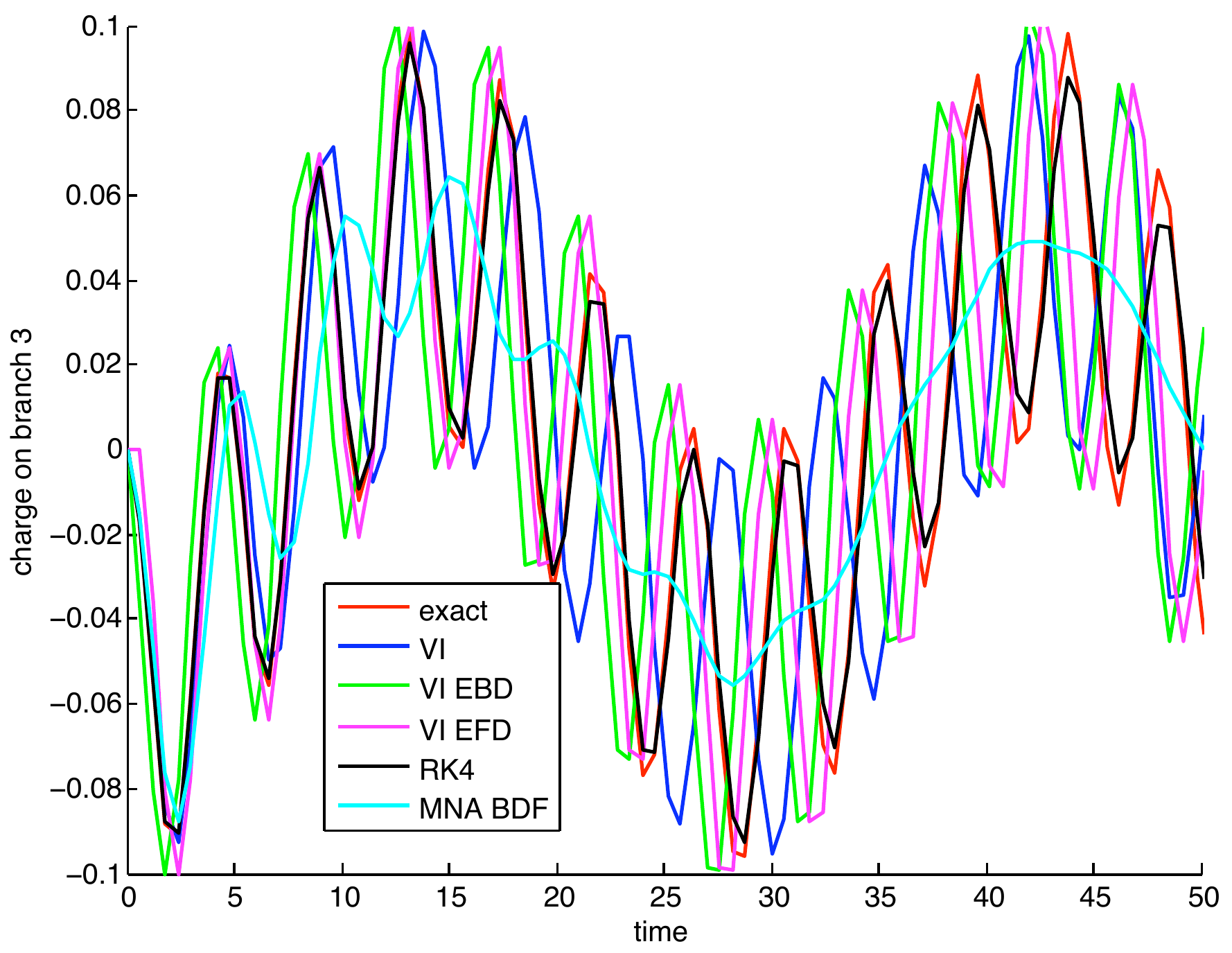} \\
 \includegraphics[width=0.45\textwidth]{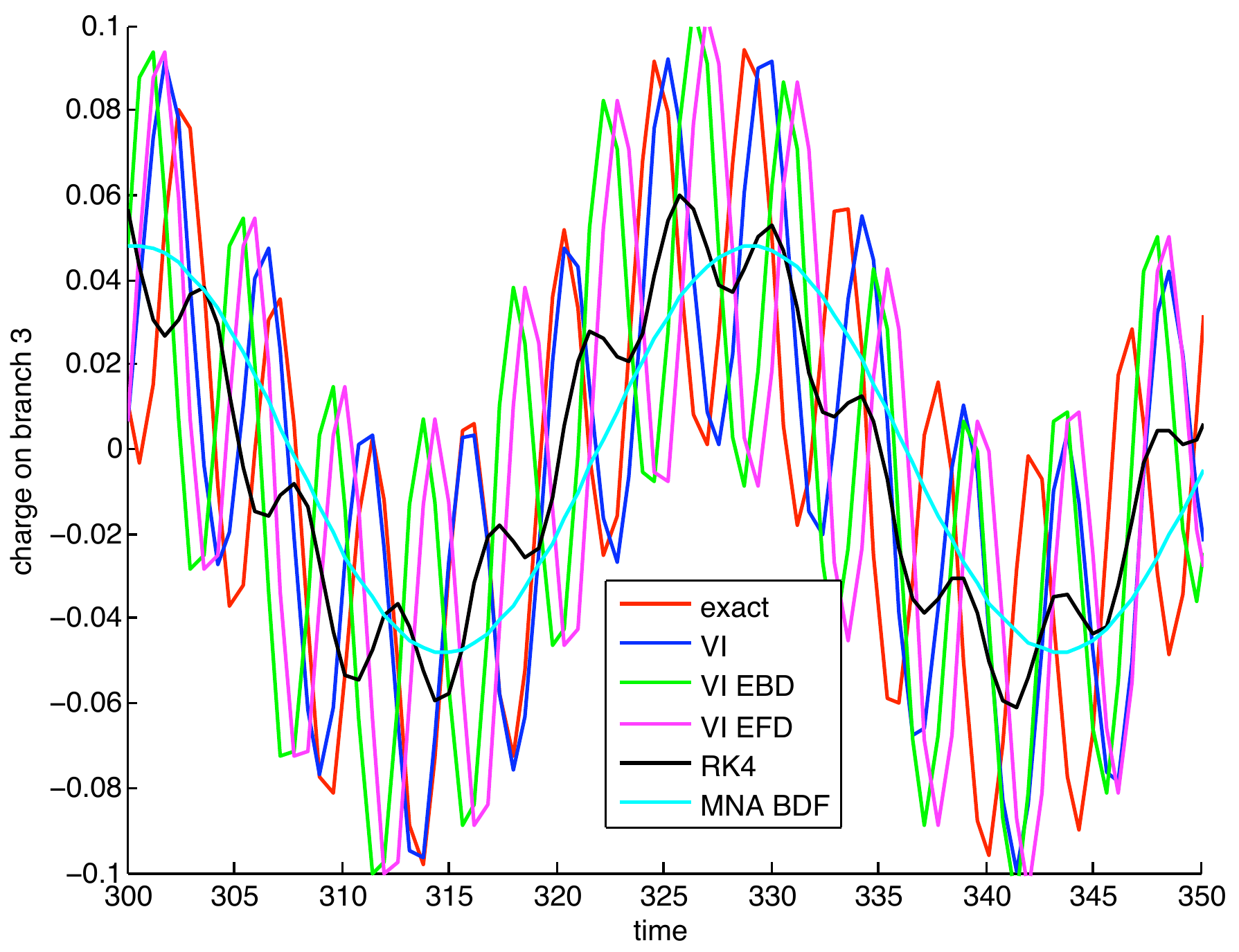} & \includegraphics[width=0.45\textwidth]{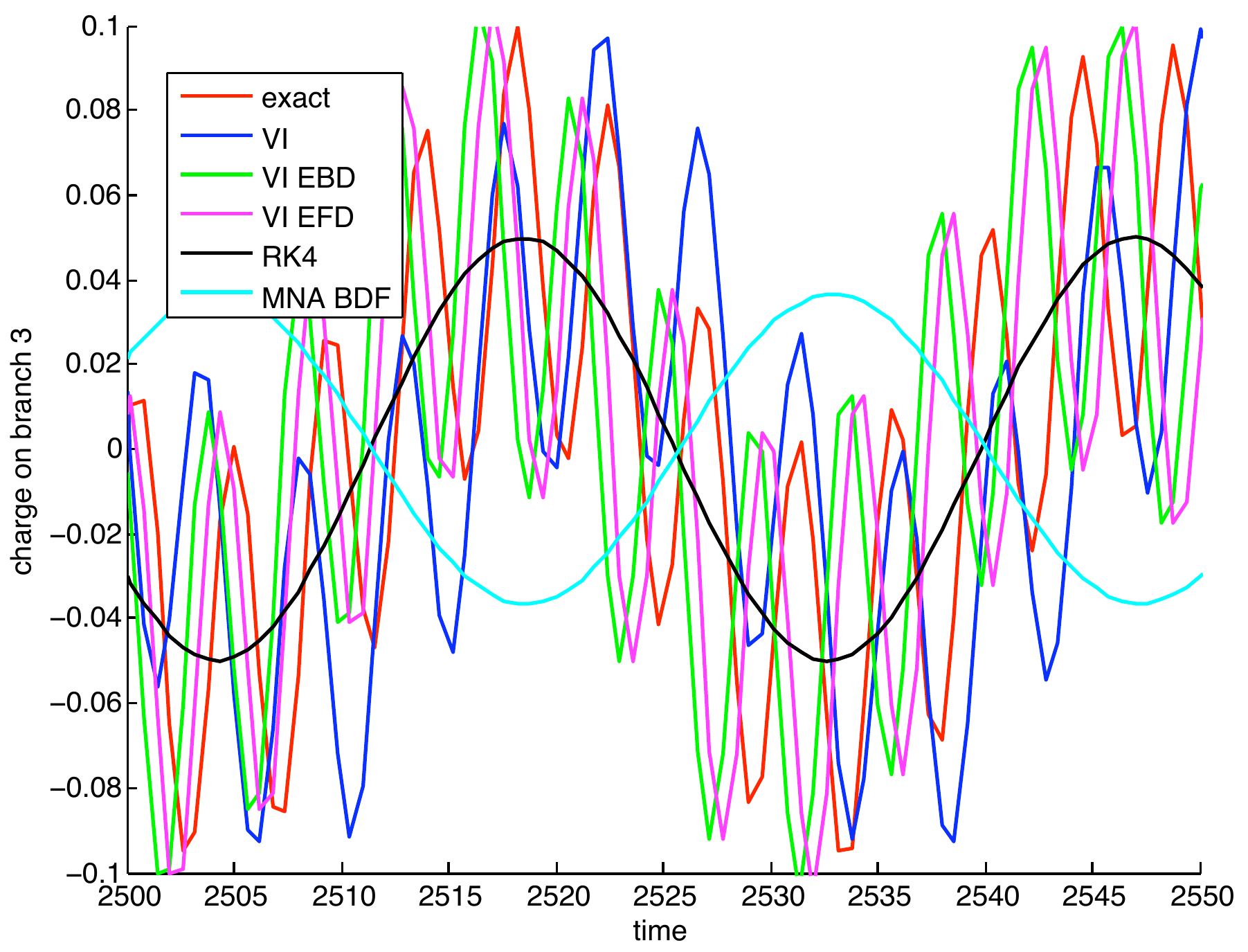} \\
\end{tabular}
\caption{Charge on capacitor $1$ of LC circuit ($h=0.6$). Comparison of the exact solution (exact) and the numerical solution using the three different variational integrators, midpoint rule (VI), backward Euler (VI EBD), and forward Euler (VI EFD), a Runge-Kutta method of fourth order (RK4), and a BDF method of second order based on MNA (MNA BDF). For the variational integrators the amplitudes for low and high frequencies are preserved. 
Using the Runge-Kutta method, the amplitude corresponding to the high frequency is damped out for increasing integration time whereas the amplitude of the lower one seems to be preserved. Using the BDF method, the amplitudes of both frequencies are damped.}
\label{fig:circ1_ampl}
\end{figure}

\begin{figure}[htb]
 \centering
\includegraphics[width=0.5\textwidth]{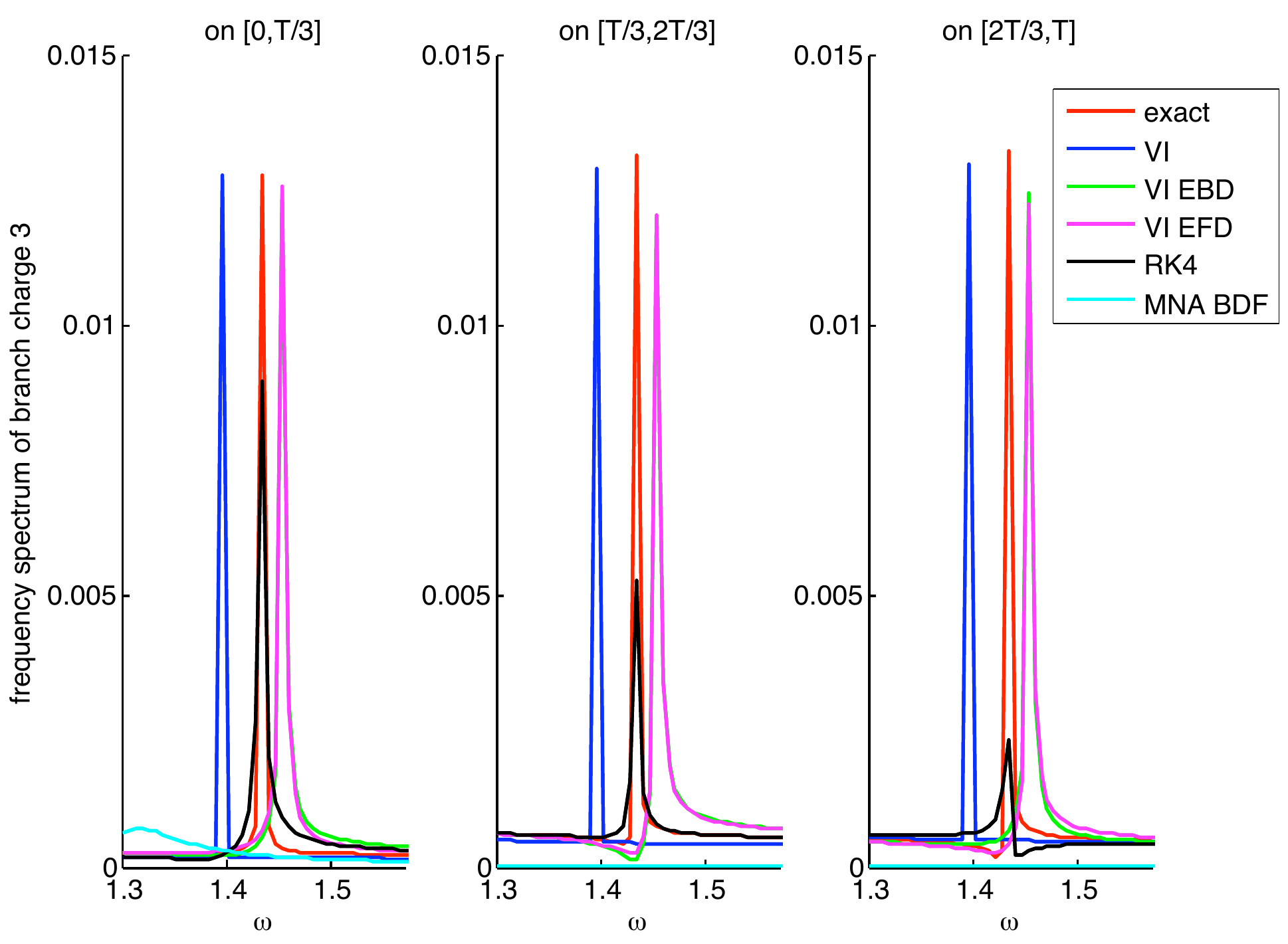} 
\caption{Frequency spectrum of charge on capacitor $1$ of LC circuit ($h=0.4$) computed on time interval $[0,T/3]$, $[T/3,2T/3]$, $[2T/3,T]$. Comparison of the exact solution (exact) and the numerical solution using the three different variational integrators, midpoint rule (VI), backward Euler (VI EBD), and forward Euler (VI EFD), a Runge-Kutta method of fourth order (RK4), and a BDF method of second order based on MNA (MNA BDF). Using RK4 and BDF the spectrum is damped for higher integration times and preserved using a variational integrator.}
\label{fig:circ1_frequ_short}
\end{figure}

\begin{figure}[htb]
 \centering
\begin{tabular}{cc}
\includegraphics[width=0.45\textwidth]{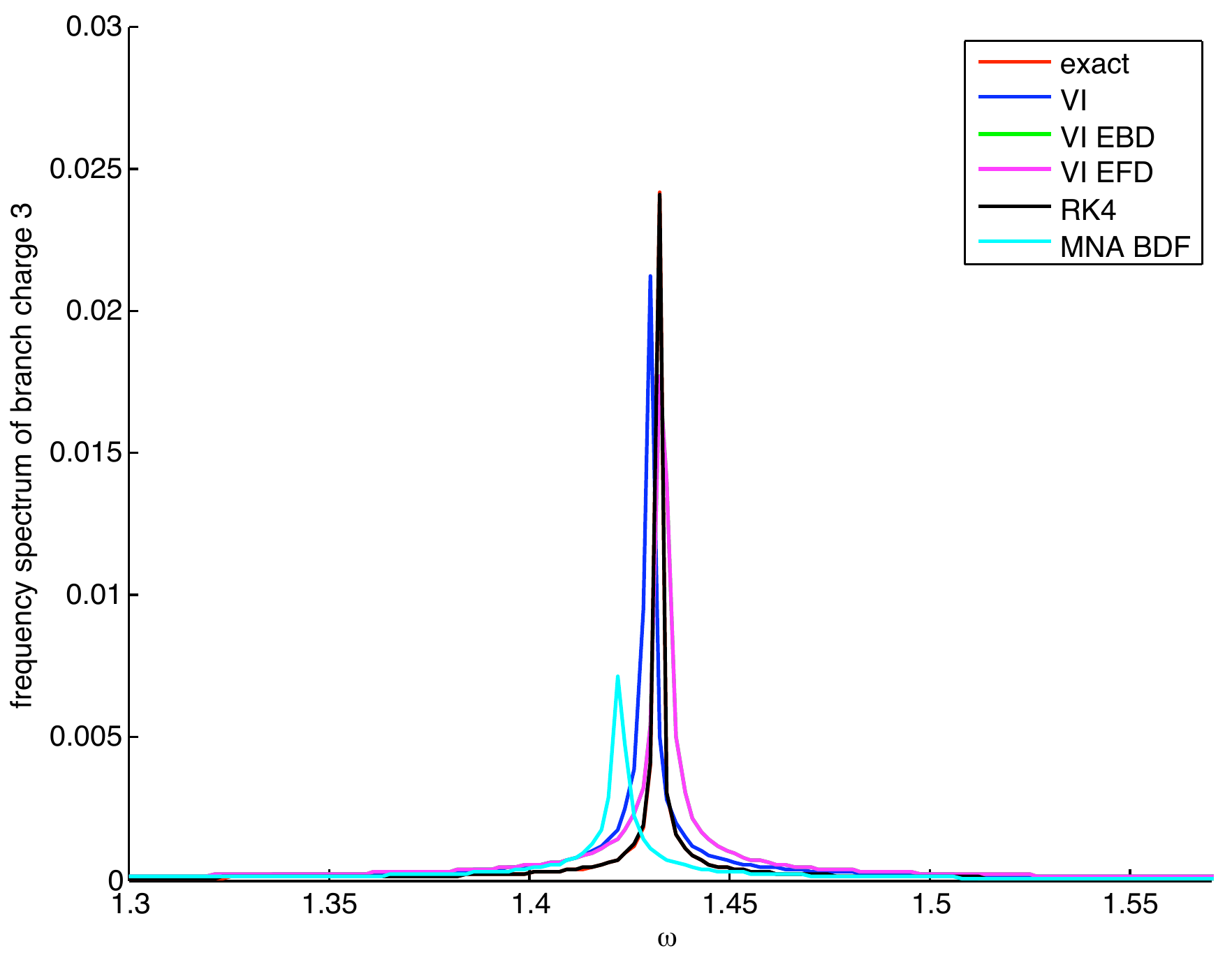} & \includegraphics[width=0.45\textwidth]{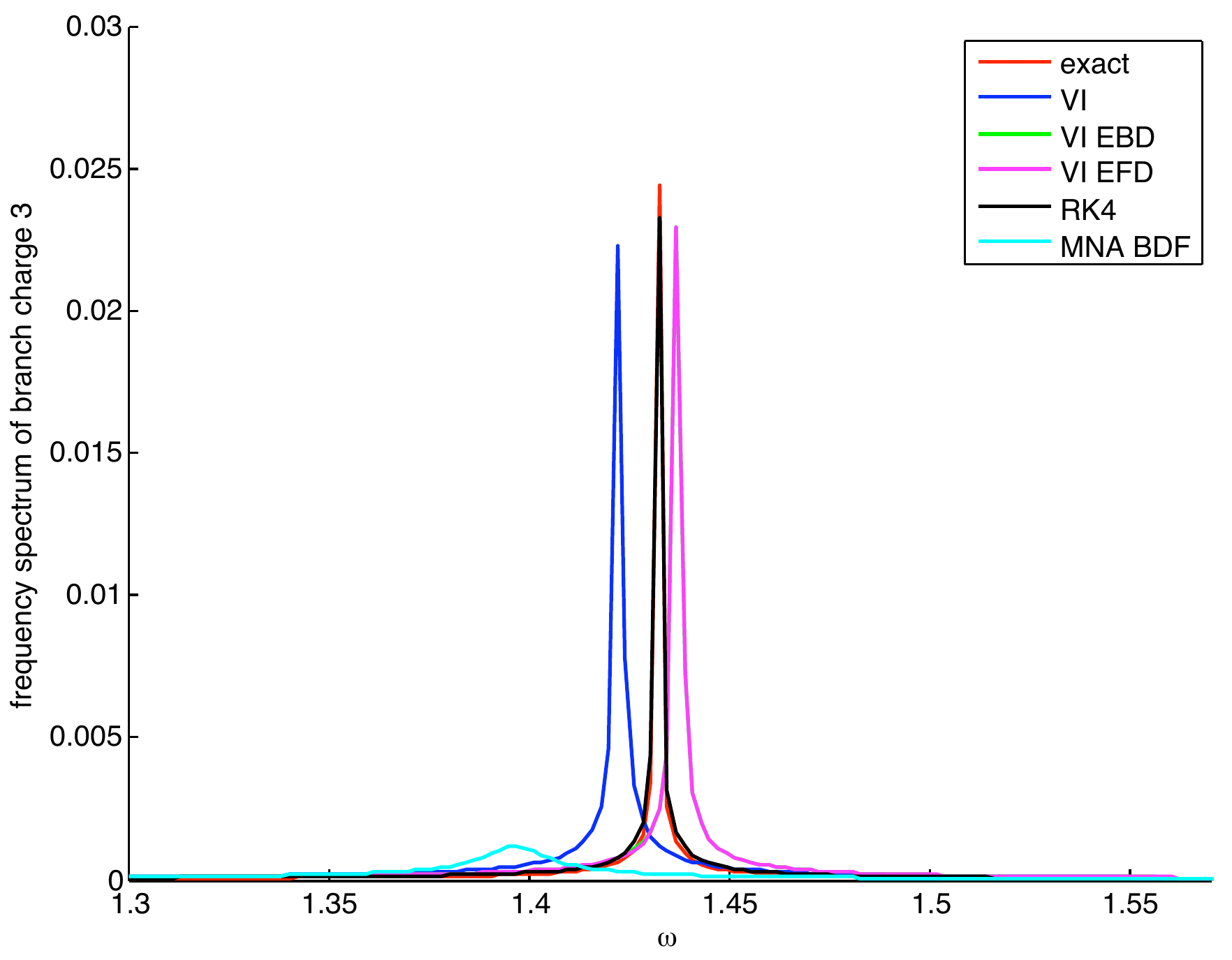} \\
  \footnotesize{$h=0.1$} & \footnotesize{$h=0.2$}\\
 \includegraphics[width=0.45\textwidth]{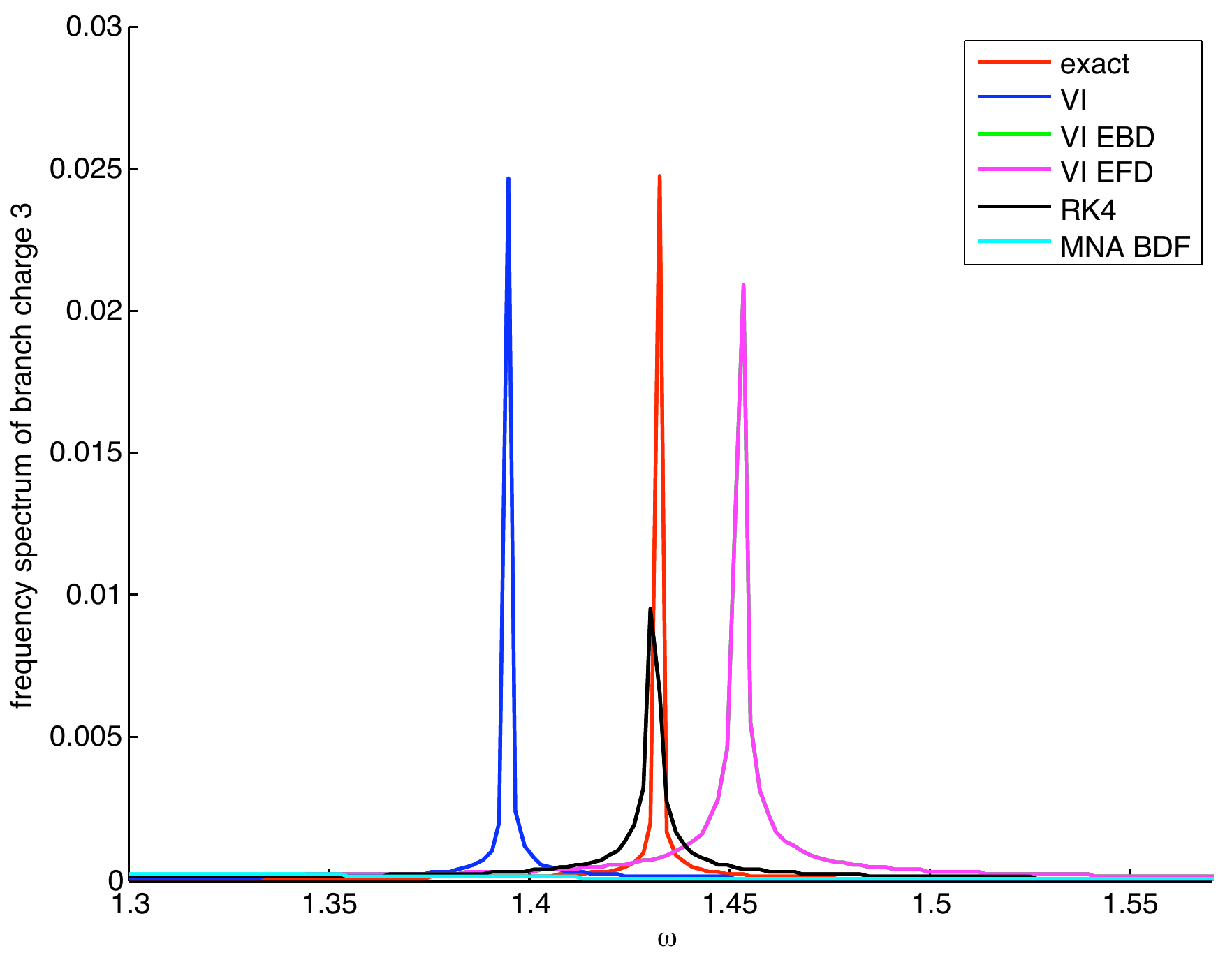} & \includegraphics[width=0.45\textwidth]{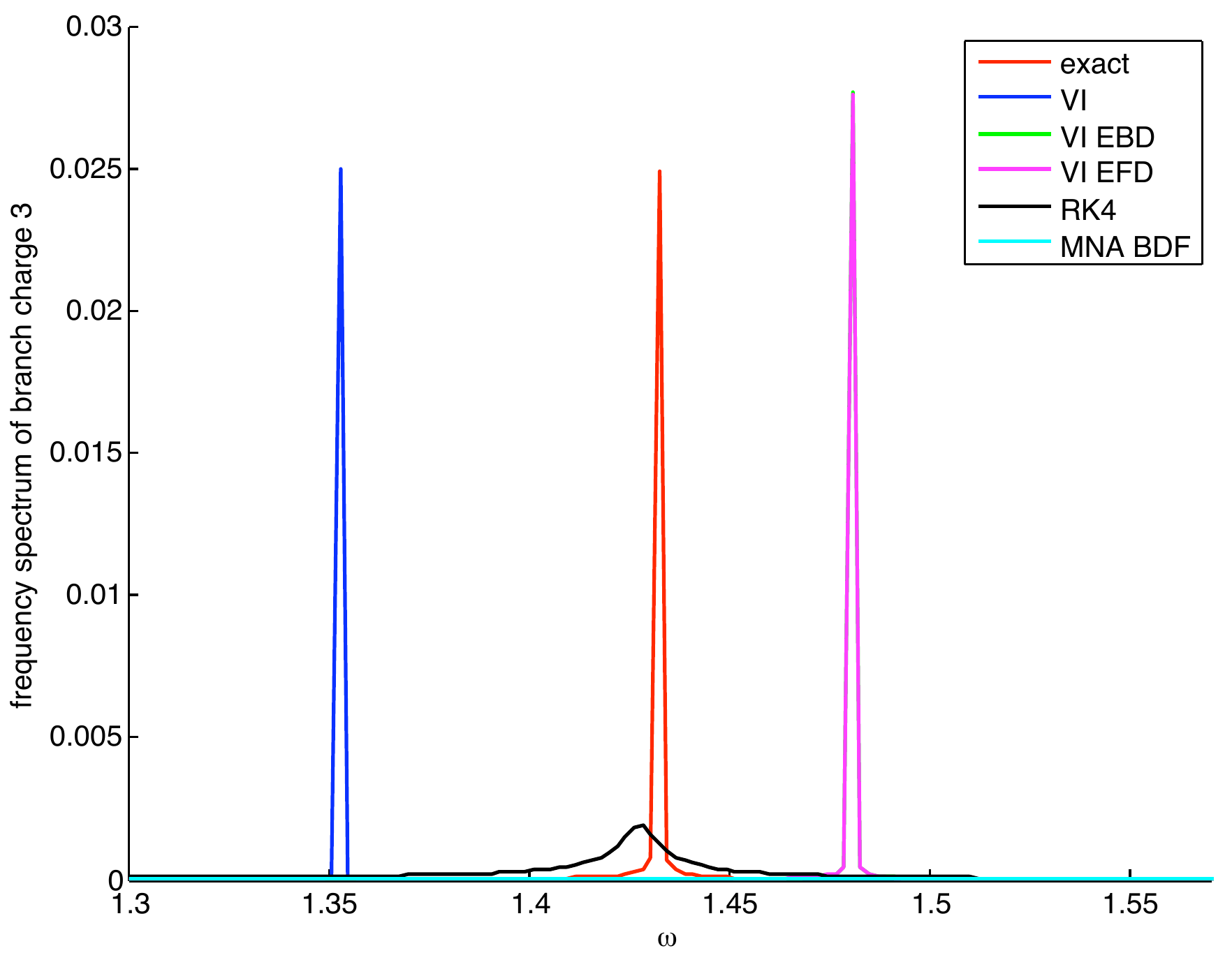} \\
\footnotesize{$h=0.4$} & \footnotesize{$h=0.6$} \\
\end{tabular}
\caption{Frequency spectrum of charge on capacitor $1$ of LC circuit. Comparison of the exact solution (exact) and the numerical solution using the three different variational integrators, midpoint rule (VI), backward Euler (VI EBD), and forward Euler (VI EFD), a Runge-Kutta method of fourth order (RK4), and a BDF method of second order based on MNA (MNA BDF). For increasing step size $h$, the damping of the higher frequency spectrum increases using RK4 and BDF. For VI, VI EBD, and VI EFD, the frequency is slightly shifted, but the spectrum is much better preserved.}
\label{fig:circ1_frequ}
\end{figure}

\subsection{LC transmission line}\label{subsec:tl}
\begin{wrapfigure}{r}{0.5\textwidth}
\vspace{-20pt}
\begin{center}
\includegraphics[width = 0.48\textwidth]{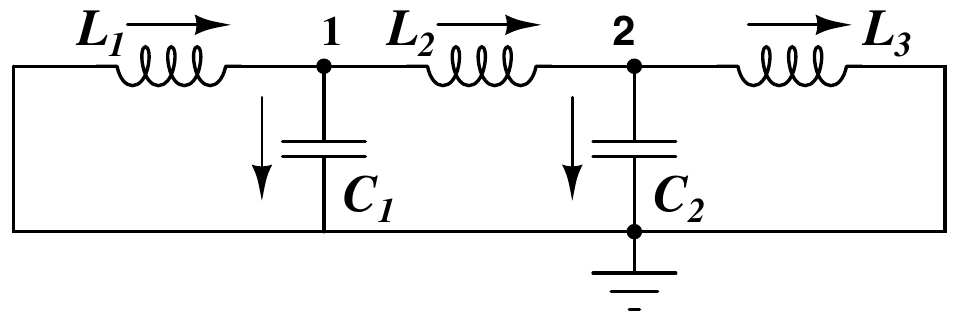}
\end{center}
\caption{LC transmission line.}
\vspace{-10pt}
\label{fig:LCtr}
\end{wrapfigure}
To demonstrate the momentum map preservation properties of the variational integrator, we consider the LC transmission line consisting of a chain of inductors and capacitors as illustrated in Figure \ref{fig:LCtr}.
The matrix Kirchhoff Constraint matrix $K\in \mathbb{R}^{n,m}$ and the Fundamental Loop matrix $K_2\in \mathbb{R}^{n,n-m}$ are (with the third node assumed to be grounded),
\begin{equation*}
K = \left(\begin{array}{cc} -1 & 0\\ 1 & -1 \\ 0 & 1 \\ 1 &0 \\ 0 & 1  \end{array}\right), \quad\quad 
K_2 = \left(\begin{array}{ccc} 1 & 0 & 0\\ 0 & 1 & 0 \\ 0 & 0 & 1 \\ 1 & -1 & 0 \\ 0 & 1 & -1  \end{array}\right).
\end{equation*}
With $n_C=2$ and $K_C =  \left(\begin{array}{cc} 1 & 0 \\ 0 &1  \end{array}\right)$ having full rank, we can follow from Proposition~\ref{prop:degeneracy}, that the reduced Lagrangian system is non-degenerate.
For this circuit, the topology assumption \ref{ass:top} holds, i.e.~all nodes except ground have exactly one branch connected and inward and outward to the node. Thus, from Theorem~\ref{theo:neother} it holds that the sum of the inductor fluxes $p_{L_1}+p_{L_2}+p_{L_3}$ is a conserved quantity that can be also seen from the variational simulation results depicted in Figure~\ref{fig:LCtr_flux}.

\begin{figure}[htb]
 \centering
 \includegraphics[width=0.5\textwidth]{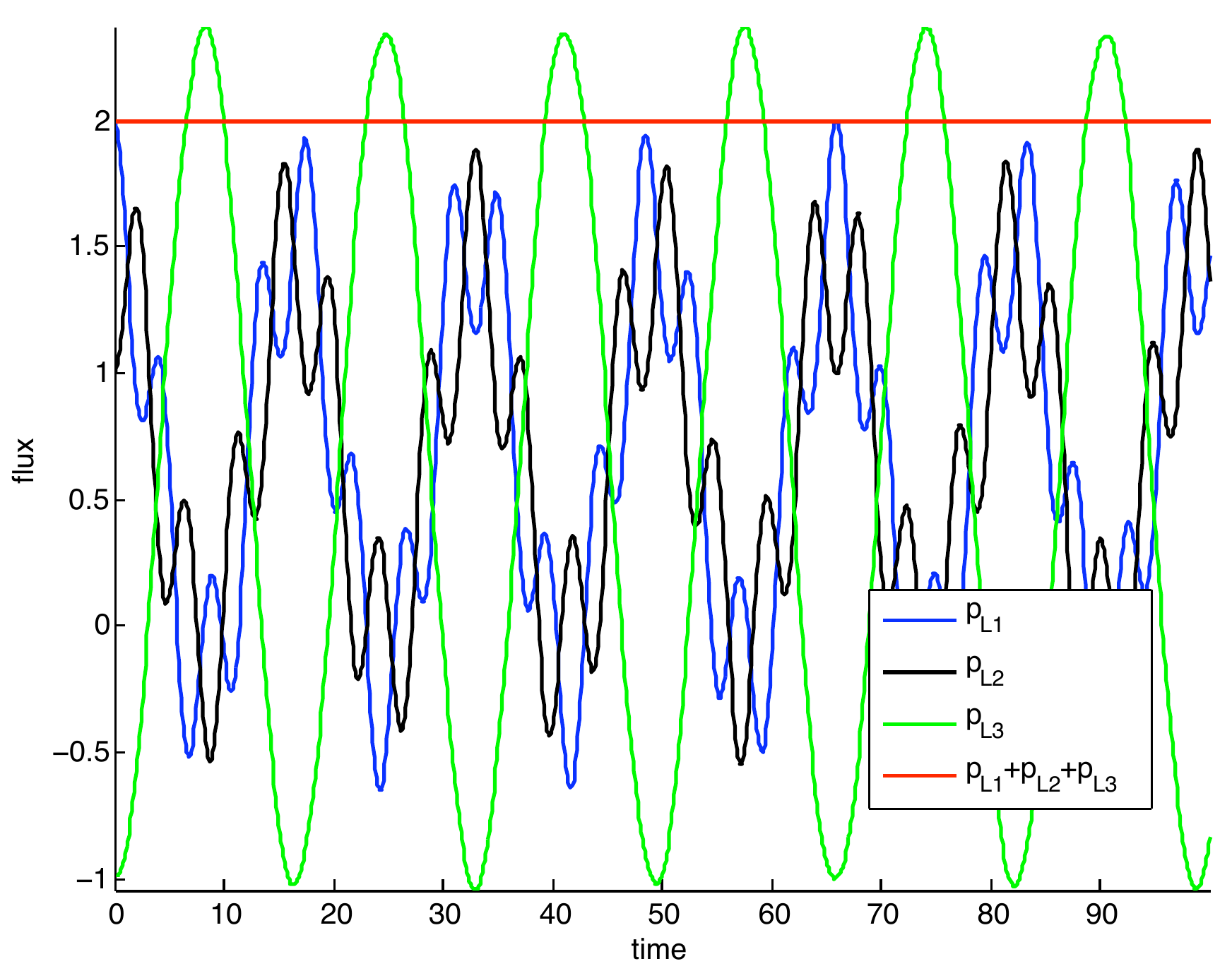} 
\caption{LC transmission line: The sum of the fluxes through all three inductors (blue, black, and green line) is preserved (red line).}
\label{fig:LCtr_flux}
\end{figure}

\subsection{Validation on the stochastic variational integrator}

Consider the stochastic differential equation
\begin{equation}\label{eq:sde}
    dx=Ax dt + \bar{\Sigma} dW_t,
\end{equation}
where $\bar{\Sigma}$ is a $n$-by-$m$ matrix, not necessarily full rank, $x=(x_1,x_2,\ldots,x_n)\in \mathbb{R}^n$, $A\in\mathbb{R}^{n,n}$ and $W_t$ is an $m$-dimensional Brownian motion (with independent components).
A common way to verify the quality of a numerical solution is to consider statistical moments of the solution; in particular, we focus on the expectation and the variance, i.e., $\mathbb{E}(x(t))$ and $\mathbb{D}(x(t))=\mathbb{E}(x(t))^2-(\mathbb{E}(x(t)))^2$.

On the analytical side, by Ito's formula (see e.g.~\cite{Ok00}) we have with $B(t)=\exp(At)$
\begin{subequations}\label{eq:expvar}
\begin{align}
    \mathbb{E}(x(t)) &= B(t)x(0) \label{eq:exp}\\
    \mathbb{D}(x(t)) &= \int_0^t B(\tau) \bar{\Sigma} \bar{\Sigma}^T B(\tau)^T d\tau. \label{eq:var}
\end{align}
\end{subequations}
The expectation and the variance can always be computed if $A$ and $\bar{\Sigma}$ are given. For big systems, however, such a mundane computation is quite complex.
On the numerical side, we run an ensemble of simulations (of total number $M$), all starting from the same initial condition but for each simulation an independent set of noise (i.e., different $\xi_k$) is used. The ensemble are indicated by $x^1(t),x^2(t),\ldots, x^M(t)$ where for any $j$, $x^j(t)=(x^j_1(t), x^j_2(t), \ldots, x^j_n(t))$ is a vector. We compute the empirical moments by
\begin{subequations}\label{eq:expvar_d}
\begin{align}
    \bar{\mathbb{E}}(x(t)) &\approx \frac{1}{M} \sum_{j=1}^M x^j(t)  \label{eq:exp_d}\\
    \bar{\mathbb{D}}(x(t)) &\approx \frac{1}{M} \sum_{j=1}^M (x^j(t))^2 - \frac{1}{M^2} \left(\sum_{j=1}^M x^j(t)\right)^2. 
    \label{statMoments}
\end{align}
\end{subequations}
The numerical method is validated if for large enough $M$ the empirical moments \eqref{eq:expvar_d} are close to the analytical ones \eqref{eq:expvar}.

In our setting, we can rewrite the reduced stochastic Euler-Lagrange equations \eqref{ELc_K_noise} in the form of \eqref{eq:sde} with $x=(\tilde{q},\tilde{p}) \in \mathbb{R}^{2(n-m)}$, $\bar{\Sigma} = \left( \begin{array}{cc} 0 & 0\\ 0& K_2^T \Sigma \end{array} \right)\in\mathbb{R}^{2(n-m),2n}$, and the obvious definition of $A\in \mathbb{R}^{2(n-m),2(n-m)}$ with $\Sigma \in \mathbb{R}^{n,n}$ and $K_2\in \mathbb{R}^{n,n-m}$.
The analytical variance matrix $\mathbb{D}((\tilde{q}(t),\tilde{p}(t))\in \mathbb{R}^{2(n-m),2(n-m)}$ for the reduced system can now be calculated using equation \eqref{eq:var}.
The corresponding variance matrix for the full system can then be calculated as
\[
\mathbb{D}(q(t),p(t)) = \left( \begin{array}{cc} K_2 & 0 \\ 0& K_2 \end{array}\right) \mathbb{D}(\tilde{q}(t),\tilde{p}(t))  \left( \begin{array}{cc} K_2^T & 0 \\ 0& K_2^T \end{array}\right) \in \mathbb{R}^{2n,2n}.
\]
As a demonstration, we calculate the empirical and analytical moments for the circuit introduced in Section \ref{subsec:osLC}. For the experiments throughout this section, we defined $\Sigma$ as $4$-by-$4$ diagonal matrix with diagonal entries $\Sigma_{jj}=0.01,\, j=1,\ldots 4$. The step size is $h=0.1$, the integration time for each simulation is $T=30$, and we start with the initial conditions $\tilde{q}_0=(1,0)$, $\tilde{v}_0=(0,0)$, and $\tilde{p}_0=(0,0)$. The empirical averages are calculated over an ensemble of $M=100000$ independent simulations.

\begin{figure}[ht]
 \centering
\begin{tabular}{cc}
\includegraphics[width=0.45\textwidth]{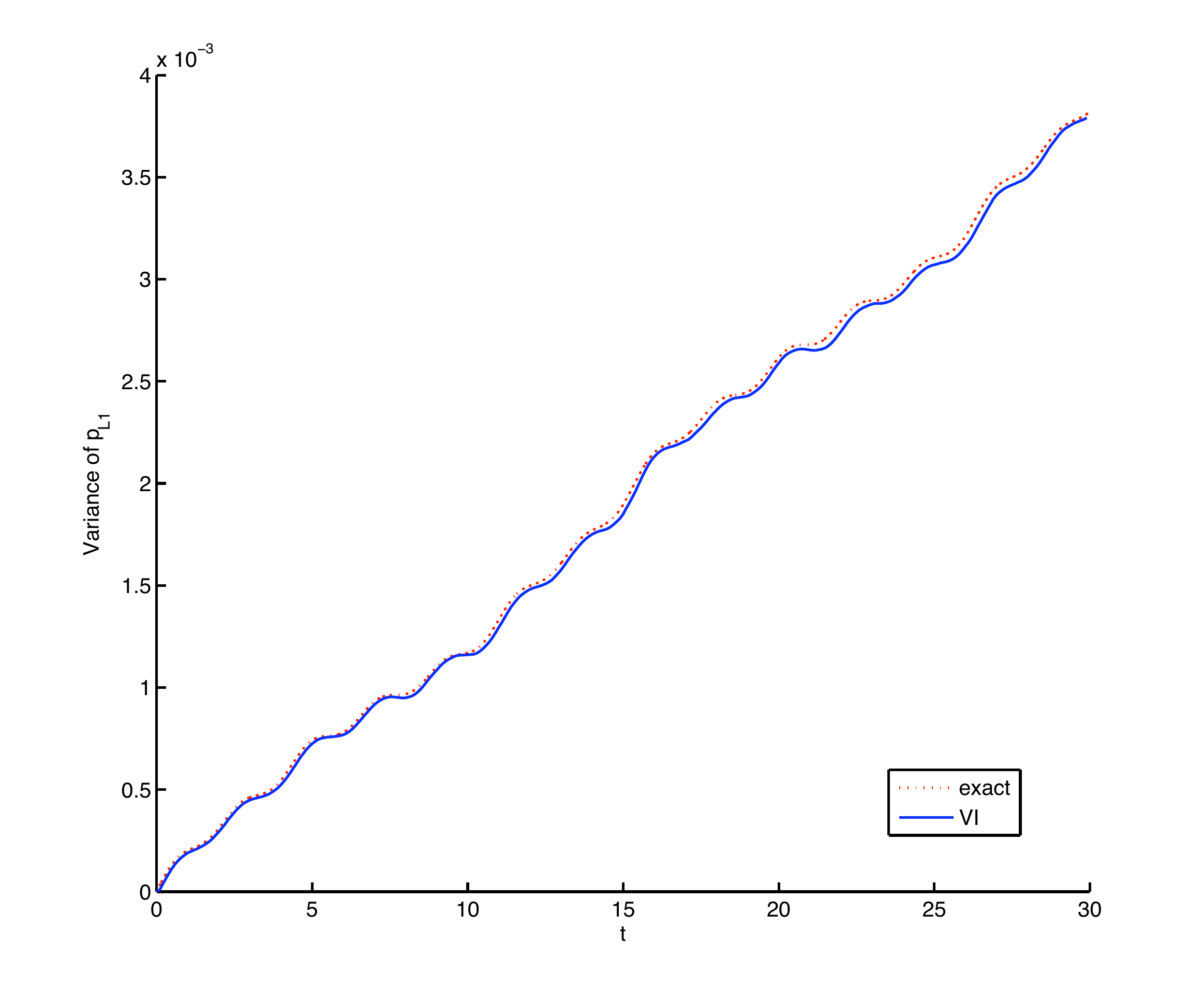} &\includegraphics[width=0.45\textwidth]{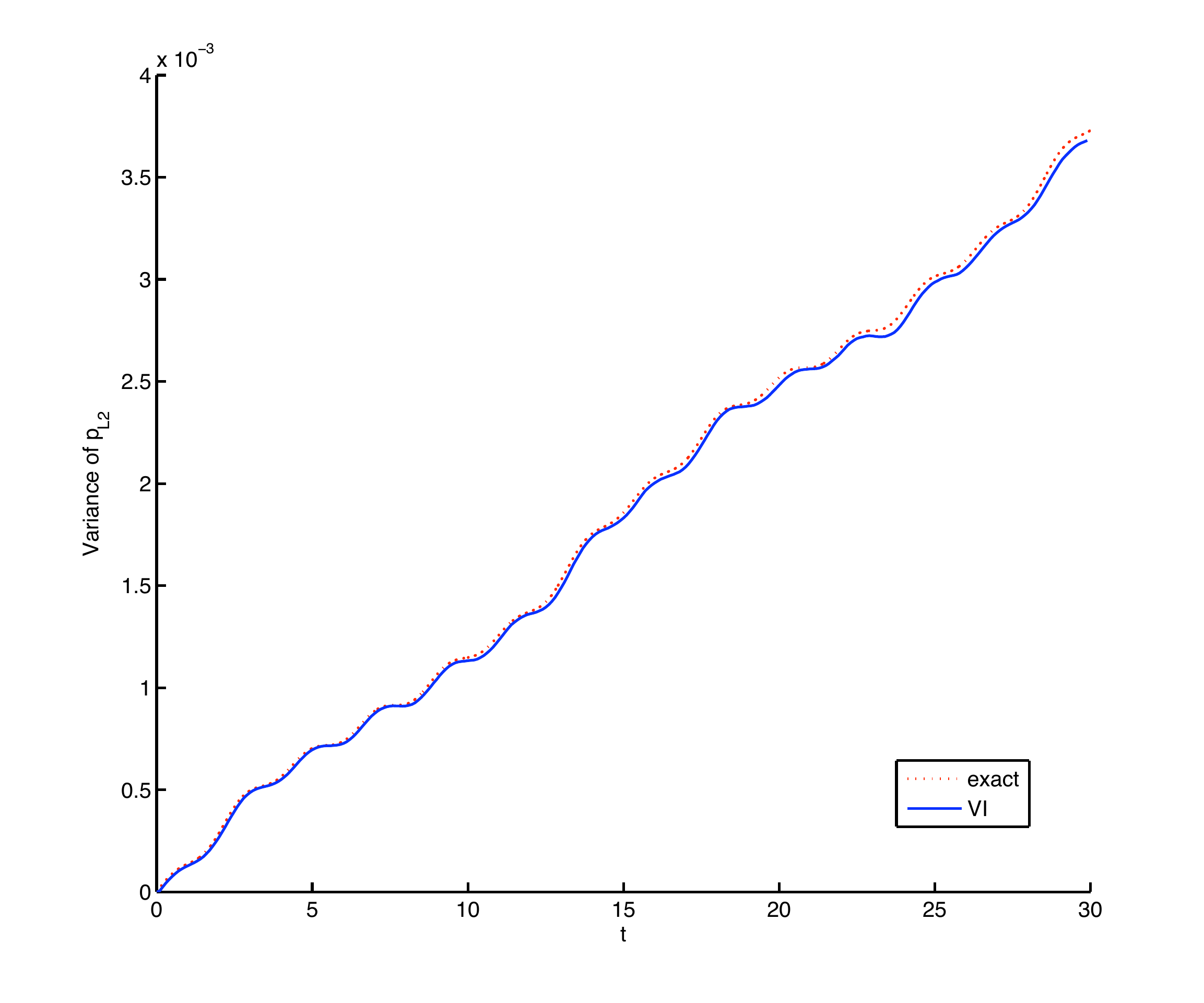} \\
\footnotesize{(a)} & \footnotesize{(b)}
\end{tabular}
 \caption{Benchmark of variances as functions of time according to \eqref{eq:var} (red) and variances as functions of time computed numerically by averaging over an ensemble according to \eqref{statMoments} (blue). a) $\mathbb{D}p_1$ b) $\mathbb{D}p_2$.}
 \label{TrueVariance}
\end{figure}

The analytical variance of $p_{L_1}$ and $p_{L_2}$, i.e., the fifth and sixth diagonal elements of the variance matrix in the full system, are plotted as functions of time (see Figure~\ref{TrueVariance}, red dotted line). Notice, that $p_{L_1}$ and $p_{L_2}$ in our case are just the currents through inductor branch $1$ and $2$, the inductances are $L_1 = L_2 = 1$. 
The result using the stochastic variational integrator is also shown in Figure~\ref{TrueVariance} (blue solid line). Both function shapes and ranges agree very well. 
In particular, all the little bumps in the variance that are subtly different are approximated correctly.
This classical test serves as an evidence that the stochastic integration works fine.

\subsection{Multiscale integration with FLAVORS}

When the circuit exhibits behavior in two time scales, our integrators can be FLAVORized \cite{TaOwMa2010} to capture the slow time scale without resolving the fast time scale to greatly reduce integration time. We first give a brief description of FLAVORS (\emph{FLow AVeraging integratORS}). For more details, we refer to \cite{TaOwMa2010}.

Consider an ordinary differential system on $\mathbb{R}^d$
\begin{equation}\label{eq:fastslow}
\dot{u}^\epsilon =G(u^\epsilon) +\frac{1}{\epsilon} F(u^\epsilon)
\end{equation}
with $\epsilon \ll 1$.
In the context of Lagrangian systems, we consider a multiscale Lagrangian as
\begin{equation}\label{eq:Lfastslow}
\mathcal{L}(q,v) = \frac{1}{2} v^T L v - V(q) - \frac{1}{\epsilon}U(q)
\end{equation}
where $V(q)$ is denoted as ``slow'' and $ \frac{1}{\epsilon}U(q)$ denoted as ``fast'' potential.
In the case of a linear circuit, the slow potential corresponds to the charge potential of a capacitor with high capacitance, whereas the fast potential corresponds to a capacitor with very small capacitance. For instance, consider the oscillating LC circuit in Section~\ref{subsec:osLC} with $C_1=1$ and $C_2=\epsilon$ ($\epsilon=1$ in Section~\ref{subsec:osLC}).
The corresponding potential in \eqref{eq:Lfastslow} can be written as the sum of slow and fast potential as
\[V(q) = \frac{1}{2}q_{C_1}^2, \quad U(q)= \frac{1}{2}q_{C_2}^2.\]
The smaller $\epsilon$ (i.e. $C_2$) is, the wider the two time scales will be separated.

FLAVORS are based on the averaging
of the instantaneous flow of the differential equation \eqref{eq:fastslow} with hidden slow and fast variables. The way to FLAVORize any of our integrators for circuits is to break each timestep into a composition of two substeps. The first one uses a timestep of length $\tau$, and the second one uses a timestep $\delta-\tau$. $\tau$ has to be small enough to resolve the stiffness, but $\delta-\tau$ does not. In fact, in the first substep one integrates the entire system with the original value of the stiffness $1/\epsilon$, whereas in the second substep one integrates the system with the stiffness turned off, i.e., $1/\epsilon$ is temporarily set to $0$. Of course, for the first substep of the next step, $1/\epsilon$ has to be restored to its original big value again. To be more precise, FLAVOR is implemented using an arbitrary legacy integrator $\Phi_h^{\frac{1}{\epsilon}}$ for \eqref{eq:fastslow} in which the parameter $\frac{1}{\epsilon}$ can be controlled.
By switching on and off the stiff parameter, FLAVOR approximates the flow of $\eqref{eq:fastslow}$ over a
coarse time step $H$ (resolving the slow time scale) by the flow
\[
\Phi_H := \left( \Phi^0_{\frac{H}{M}-\tau} \circ \Phi_\tau^{\frac{1}{\epsilon}}   \right)^M,
\]
where $\tau$ is a fine time step resolving the ``fast'' time scale ($\tau \ll \epsilon$) and $M$ is a positive integer corresponding to the number of ``samples'' used to average the flow ($\delta=H/M$).
Since FLAVORS are obtained by flow composition, they inherit the structure-preserving properties (for instance, symplecticity and symmetries under a group action) of the legacy integrator for Hamiltonian systems.

Theorem 1.4 in \cite{TaOwMa2010} guarantees the accuracy of FLAVORS for $\delta \ll h_0$, $\tau \ll \epsilon$ and $\left(\frac{\tau}{\epsilon}\right)^2 \ll \delta \ll \frac{\tau}{\epsilon}$, where $h_0$ is the stability limit of step length for the legacy integrator. Furthermore, if the hidden fast and slow variables are affine functions of the original variables (such as in our case of linear circuit), the condition relaxes to $\delta \ll h_0$, $\tau \ll \epsilon$ and $\delta \ll \frac{\tau}{\epsilon}$. A intuitive interpretation of that theorem is, the numerically integrated slow variable will convergence strongly (as a function of time) to the solution to an averaged effective equation, and the fast variable will converge weakly to its local ergodic measure.

We use FLAVORS to simulate the LC circuit with $\epsilon=10^{-3}$, $\tau=0.1\epsilon=10^{-4}$, $H = 0.1$ and $M=100$.
The charges and currents as functions of time are plotted in Figure~\ref{multiscale}. Notice that the slow components in the solution are captured strongly, but the fast components may have altered wave shapes: for instance, Figure~\ref{multiscale2} shows a zoomed-in investigation of the current through the second branch, which is a superposition of a slow global oscillation and a fast local oscillation; the slow one is obviously well-captured in the usual sense, and the fast one is captured in the less-commonly-used sense of averaging.

\begin{figure}[ht]
 \centering
\includegraphics[width=\textwidth]{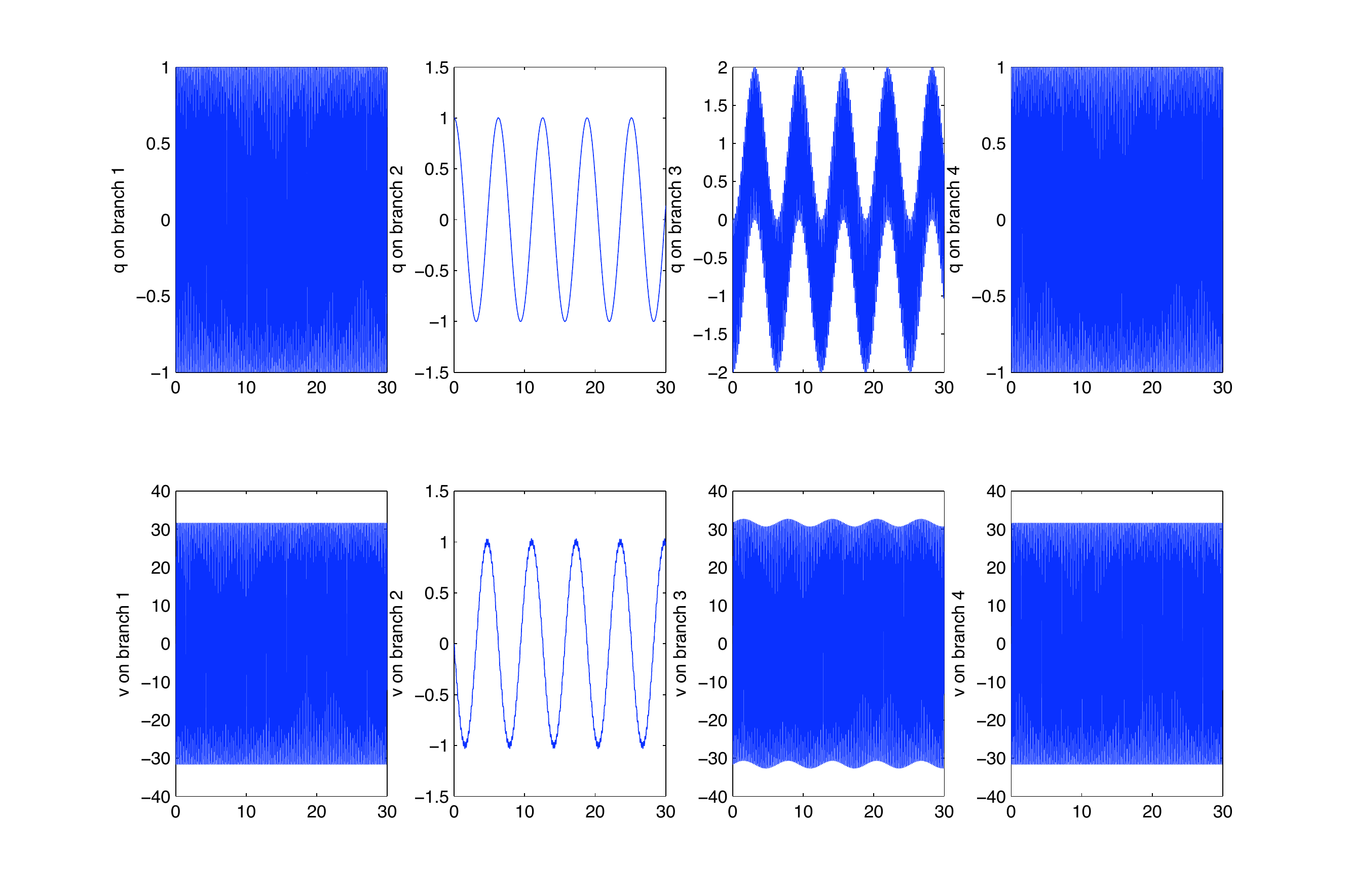} \\
\footnotesize{(a)} \\
\includegraphics[width=\textwidth]{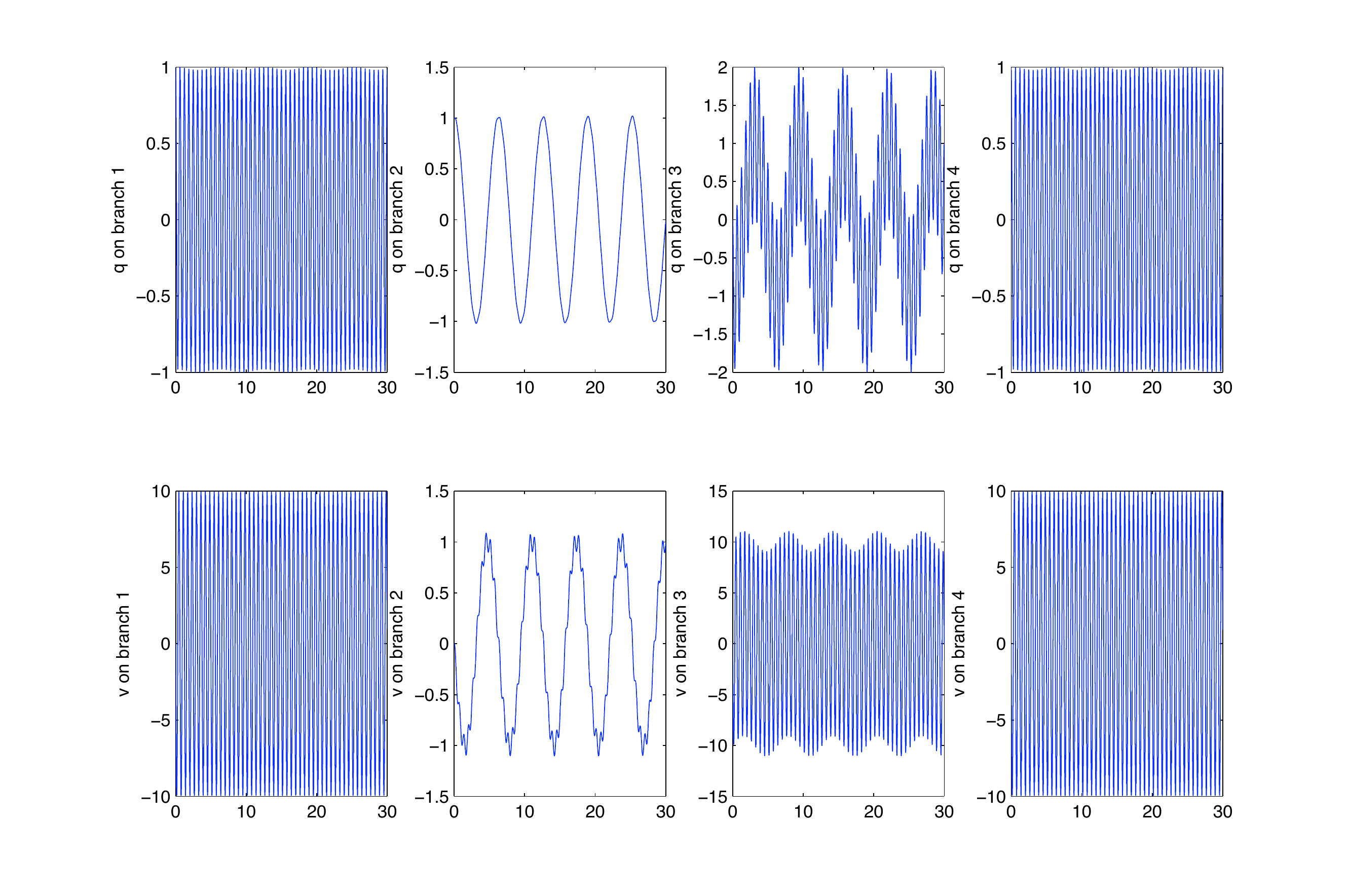} \\
\footnotesize{(b)}
 \caption{Simulations of a multiscale system: a) Benchmark solution computed with a variational integrator ($h=10^{-4}$) b) FLAVOR with $\tau =10^{-4}, \delta =10^{-3}$ and $\epsilon=10^{-3}$.}
 \label{multiscale}
\end{figure}

\begin{figure}[ht]
 \centering
 \begin{tabular}{cc}
\includegraphics[width=0.5\textwidth]{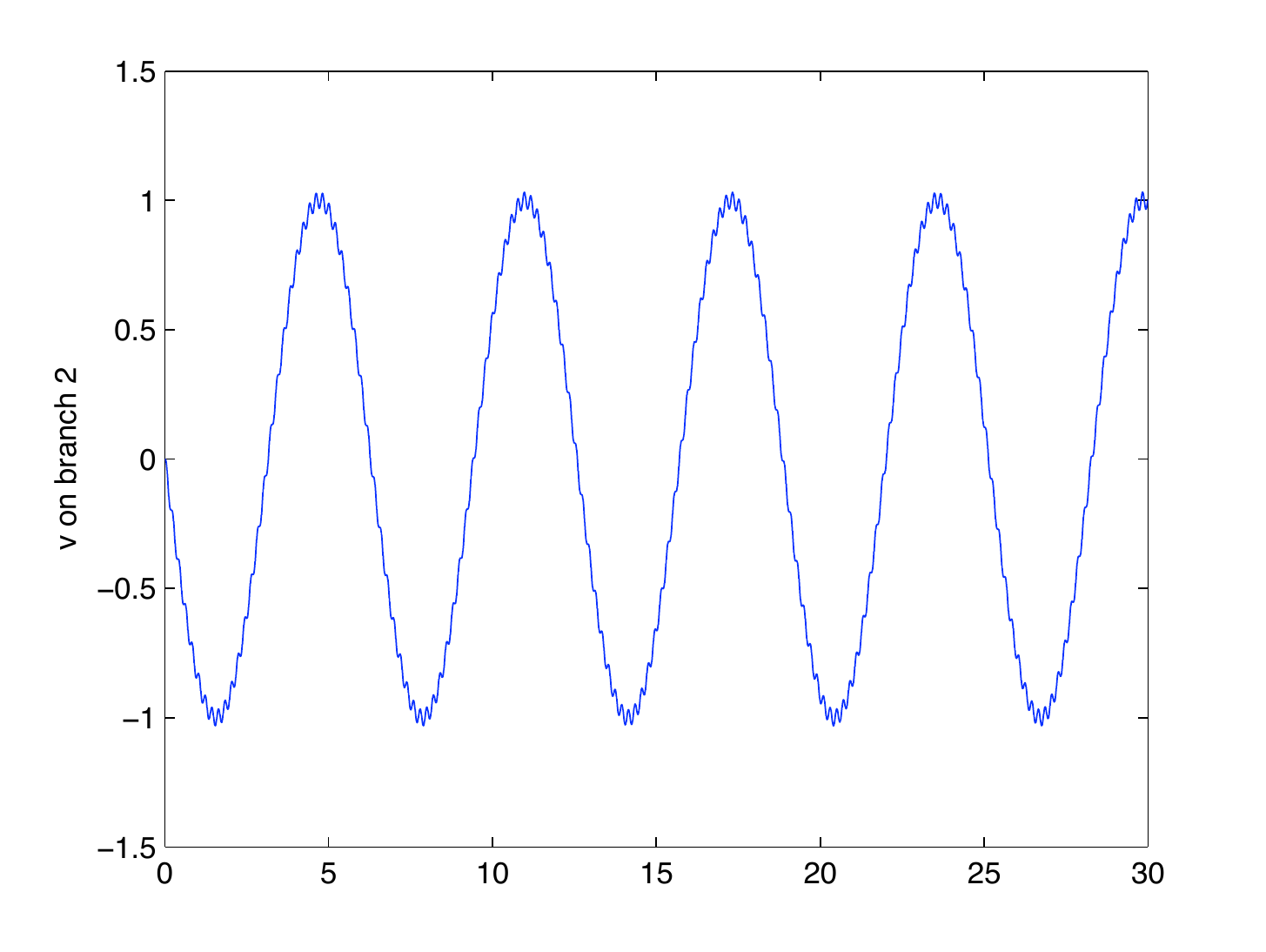} &\includegraphics[width=0.5\textwidth]{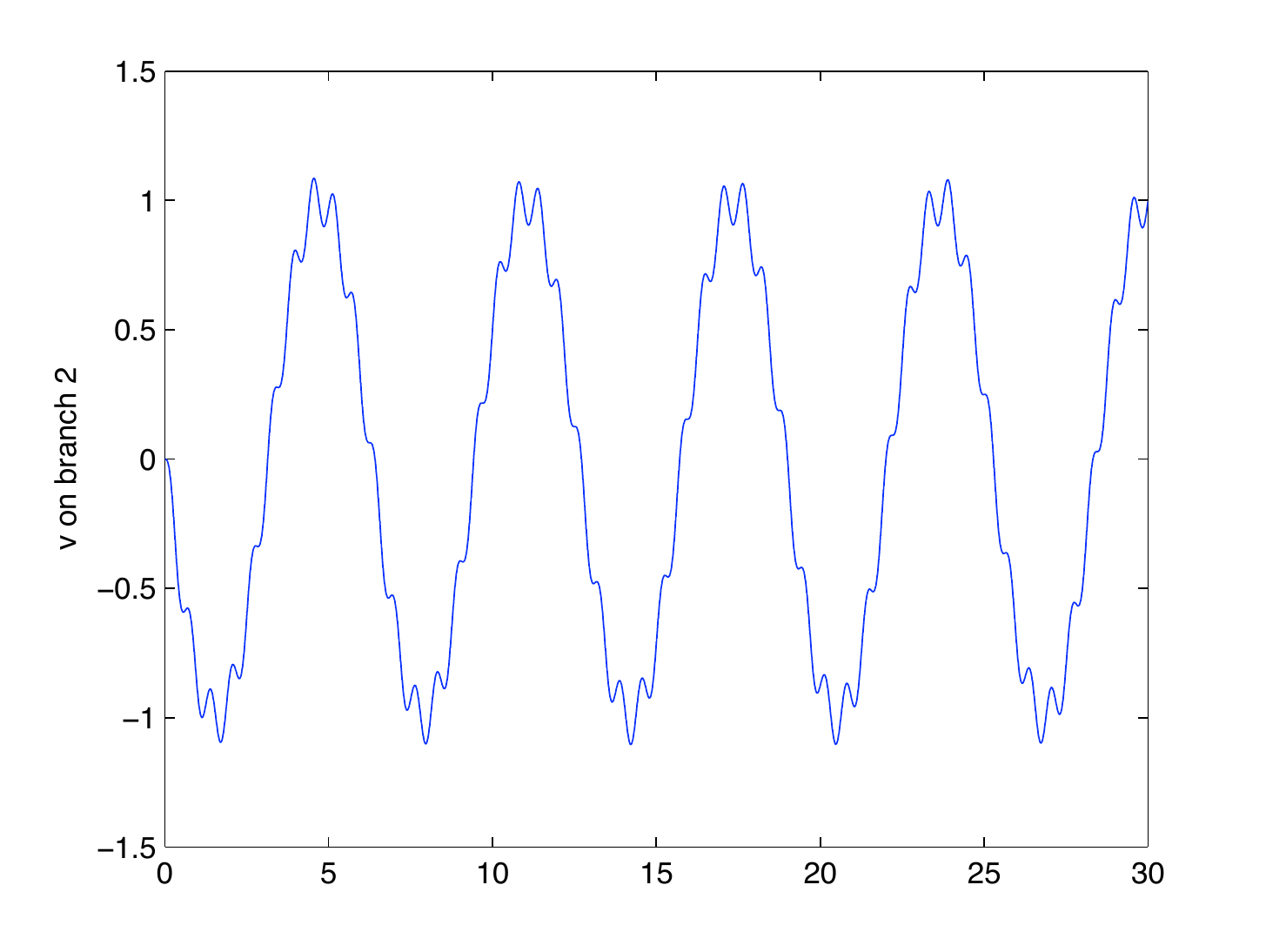} \\
\footnotesize{(a)} & \footnotesize{(b)}
\end{tabular}
 \caption{Simulations of a multiscale system: a) Benchmark solution computed with a variational integrator ($h=10^{-4}$) b) FLAVOR with $\tau =10^{-4}, \delta =10^{-3}$ and $\epsilon=10^{-3}$.}
 \label{multiscale2}
\end{figure}

\section{Conclusions}

In this contribution, we presented a unified framework for the modeling and simulation of electric circuits.
Starting with a geometric setting, we formulate a unified variational formulation for the modeling of electric circuits. 
Analogous to the formulation of mechanical systems, we define a degenerate Lagrangian on the space of branches consisting of electric and magnetic energy, dissipative and external forces that describe the influence of resistors and voltage sources as well as (non-)holonomic constraints given by the KCL of the circuit. The Lagrange-d'Alembert-Pontryagin principle is used to derive in a variational way the implicit Euler-Lagrange equations being differential-algebraic equations to describe the system's dynamics. A reduced version on the space of meshes is presented that is shown to be equivalent to the original system and for which under some topology assumptions the degeneracy of the Lagrangian is canceled.   

Based on the reduced version, a discrete variational approach is presented that provides different variational integrators for the simulation of circuits. In particular, the generated integrators are symplectic, preserve momentum maps in presence of symmetries and have good long-time energy bahvior. Furthermore, we observe that the spectrum of high frequencies is especially better preserved compared to simulations using Runge-Kutta or BDF methods. Having the variational framework for the model and the simulation, extensions of the approach using already-existing types of different variational integrators can be easily accomplished. As an example, we presented the extension for the simulation of noisy circuits using stochastic variational integrator approaches as well as multiscale methods (in particular FLAVORS) for an efficient treatment of circuits with multiple time scales. 

In the future, we will extend the approach to the simulation and analysis of more complicated nonlinear and magnetic circuits that might include nonlinear inductors, capacitors, resistors and transistors. Since a variational formulation in terms of energies, forces and constraints is still valid for the nonlinear case, the presented integrators will be derived and applied in straight forward way. Furthermore, the inclusion of controlled sources allows for the consideration of optimal control problems for circuits for which techniques also based on a variational formulation can be easily applied (see e.g.~\cite{DMOC}).
The variational simulation of combined mechanical and electric (electro-mechanical) systems is the natural next step towards the development of a unified variational modeling and simulation method for mechatronic systems. 
Furthermore, at nanoscales, thermal noise and electromagnetic interactions become an essential component of the dynamic of electric circuits. We plan to investigate the coupling of variational integrators for circuits with multi-symplectic variational integrators for EM fields and continuum mechanics (see e.g.~\cite{Lew03phd,MPSW01,StToDeMa2009}) to produce a robust structure-preserving numerical integrator for Microelectromechanical and Nanoelectromechanical systems.
Recently, Mike Giles has developed a Multilevel Monte Carlo method for differential equations with stochastic forcings \cite{Giles08} that shows huge computation accelerations, ~100 times in some cases. The extension of the current method to multilevel stochastic variational integrators is straight forward and may further accelerate computation dramatically, especially for multiscale problems,  while preserving certain properties of the circuit network.

\section{Acknowledgement}

This contribution was partly developed and published in the course of the Collaborative Research Centre 614 ``Self-Op\-ti\-mi\-zing Concepts and Structures in Mechanical Engineering" funded by the German Research Foundation (DFG) under grant number SFB 614. The authors acknowledge partial support from NSF grant CMMI-092600.
The authors gratefully acknowledge Henry Jacobs, Melvin Leok, and Hiroaki Yoshimura for delightful discussions about variational mechanics for degenerate systems. Furthermore, the authors thank Stefan Klus, Sujit Nair, Olivier Verdier, and Hua Wang for helpful discussions regarding circuit theory. Finally, we thank Sydney Garstang for proofreading the document.











\end{document}